\documentclass[12pt,halfline,a4paper]{ouparticle}

\usepackage{float}
\usepackage{indentfirst}
\usepackage{amsmath, amsthm, amssymb, amsfonts}
\usepackage{mathtools}
\usepackage{physics}
\usepackage{graphicx}
\usepackage{verbatim}
\usepackage[hidelinks]{hyperref}
\usepackage{color,algorithm,algorithmic}
\usepackage[nottoc]{tocbibind}

\usepackage{caption}
\usepackage{subcaption}
\usepackage{comment}

\newtheorem{theorem}{Theorem}
\newtheorem{lem}{Lemma}

\newcommand{\RNum}[1]{\uppercase\expandafter{\romannumeral #1\relax}}

\def\k{\mathop{\bf K}}
\def\K{\mathop{\mbox{\bf\Large K}}}

\def\leq{\leqslant}
\def\geq{\geqslant}

\def\C{\mathbb C}

\def\N{\mathbb N}

\def\Z{\mathbb Z}

\setbox0=\hbox{$+$}
\newdimen\plusheight
\plusheight=\ht0
\def\+{\mathbin{\lower\plusheight\hbox{$+$}}}

\setbox0=\hbox{$-$}
\newdimen\minusheight
\minusheight=\ht0
\def\-{\mathbin{\lower\plusheight\hbox{$-$}}}

\setbox0=\hbox{$\cdots$}
\newdimen\cdotsheight
\cdotsheight=\plusheight
\def\cds{\lower\cdotsheight\hbox{$\cdots$}}

\setbox0=\hbox{$\cdots$}
\newdimen\cdotsheight
\cdotsheight=\plusheight
\def\dds{\lower\cdotsheight\hbox{$\ddots$}}

\newcommand{\Fhyp}[3]{\,{}_3F_2\!\left(
\begin{matrix}
#1\\
#2
\end{matrix}
; #3
\right)}

\begin{document}

\title{Four explicit continued fractions for values of the Lerch transcendent and the Hurwitz zeta function}

\author{
Andrius Grigutis
\thanks{Institute of Mathematics, Vilnius University,
Naugarduko 24, LT-03225, Vilnius, Lithuania.
E-mails: \texttt{andrius.grigutis@mif.vu.lt},
\texttt{lukas.turcinskas@mif.stud.vu.lt}.}
\quad
Yuri Matiyasevich
\thanks{St.\,Petersburg Department of Steklov Mathematical Institute of Russian Academy of Sciences, Fontanka 27, 191023, St. Petersburg, Russia.
E-mail: \texttt{yumat@pdmi.ras.ru}.}
\quad
Lukas Turčinskas\,\footnotemark[1]
}

\abstract{We prove four explicit continued fraction representations for the Lerch
transcendent $\Phi(z, s, M+1)$, where $\Re M>0$ and
$(z,s)\in\{(-1,1),(-1,2),(1,2),(1,3)\}$. All of the continued fractions have unit partial numerators, while partial denominators depend on the parameter $M$. The proofs combine equivalent
transformations of continued fractions, generalized hypergeometric
functions, three-term recurrences, and asymptotic analysis of minimal
solutions. For $z=1$, the corresponding representations give continued
fractions for the values of the Hurwitz zeta function $\zeta(2, M+1)$ and
$\zeta(3, M+1)$.}

\date{\today}

\keywords{continued fractions, Lerch transcendent, Hurwitz zeta
function, generalized hypergeometric functions, three-term recurrence
relations.}
\msc{11A55, 11M35, 33C20.}

\maketitle

\section{Short Introduction to Continued Fractions}

Continued fractions are a fundamental tool in rational approximation. Their expansions of irrational real numbers provide rapid rational approximations with an arbitrary precision. The importance of continued fractions extends beyond approximation, including applications to Diophantine equations, quadratic irrationals, and other problems in number theory \cite{Lorentzen}. Continued fractions also provide representations of many important special functions, such as hypergeometric and related functions \cite{BerndtLamphereWilson1985}, \cite{Handbook}. Their expansions are useful both for theoretical investigations and for numerical computation \cite{Lorentzen}, \cite{Handbook}.

Let $(a_1,\,a_2,\,\ldots)$ and $(b_0,\,b_1,\,\ldots)$ denote sequences of numbers. We use the following equivalent notations for the continued fraction:
\begin{align*}
b_0 + \cfrac{a_1}{b_1 + \cfrac{a_2}{b_2 + \ddots}},\qquad 
b_0+\frac{a_{1}}{b_{1}}\+ \frac{a_{2}}{b_{2}}\+
\frac{a_{3}}{b_{3}}\+\cds,\qquad 
b_0 + \K_{n=1}^{\infty}\frac{a_n}{b_n},  \qquad b_0+\K\left(\frac{a_n}{b_n}\right),
\end{align*}
where $a_n$, $b_n$, $n\in\mathbb{N}$ are called the partial numerators and the partial denominators, respectively, and $b_0$ is called the initial term. The counterparts $f_n,\,n\in\mathbb{N}_0$ of partial sums in continued fractions are called the convergents. They can be given by
\[
f_0=b_0, \qquad f_n=\frac{A_n}{B_n}=b_0 + \K_{j=1}^{n}\frac{a_j}{b_j},\qquad n\in\mathbb{N},
\]
where $A_n$ and $B_n$ are defined recursively \cite[Sec.~1.1.3]{Lorentzen} by
\begin{align}\label{fundamental_recurrence}
\begin{aligned}
&A_{-1}=1,\qquad A_0=b_0,\qquad
A_n=b_nA_{n-1}+a_nA_{n-2},\\
&B_{-1}=0,\qquad B_0=1,\qquad
B_n=b_nB_{n-1}+a_nB_{n-2},
\end{aligned}
\qquad n\in\mathbb N.
\end{align}

The convergence of a continued fraction is understood as the convergence of the sequence of its convergents $f_n$, $n\in\mathbb{N}_0$. Whenever the limit $\lim_{n\to\infty}f_n$
exists, the continued fraction is said to converge, and its value is equal to this limit, i.e.,
\[
b_0+\K_{n=1}^{\infty}\frac{a_n}{b_n}
=\lim_{n\to\infty}f_n.
\]

In this work, we prove four identities (Theorems \ref{conj_1}--\ref{conj_4}) relating the explicit continued-fraction representations and specific values of the Lerch transcendent. The Lerch transcendent is defined by
\begin{align}\label{Lerch_def}
\Phi(z,\,s,\,a)=\sum_{n=0}^{\infty}\frac{z^n}{(n+a)^s},
\end{align}
where $z,\,s,\,a\in\mathbb C$ and $a\notin\{0,\,-1,\,-2,\,\ldots\}$. The series \eqref{Lerch_def} converges absolutely for $|z|<1$ and arbitrary $s\in\mathbb{C}$, and for $|z|=1$ whenever
$\Re s >1$; \cite[pp.~28--30]{LaurincikasGarunkstis2002}.

\section{Main results, their origin, and prior work}

Throughout this section, $\Phi(z, s, a)$ and $\zeta(s, a)=\Phi(1, s, a)$ denote the Lerch transcendent and the Hurwitz zeta function, respectively. 

\subsection{Main results}
In the following four theorems, we summarize the main results of the paper.

\begin{theorem}\label{conj_1}
Let $M\in\mathbb C$ satisfy $\Re M >0$, and let
\[
b_0=0,\qquad
b_1=1,\qquad
b_2=2M-1, \qquad
b_n=\frac{2M}{n-1},\qquad n\in \{3,\,4,\,\ldots\}.
\]
Then
\begin{equation}\label{cf1}
2M\Phi(-1,\,1,\,M+1)
=
\K_{n=1}^{\infty}\frac{1}{b_n}.
\end{equation}
\end{theorem}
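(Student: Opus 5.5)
The plan is to reduce \eqref{cf1} to a continued fraction whose tail has polynomial coefficients, identify that tail with a ratio of consecutive terms of a minimal solution of a three-term recurrence, and finish with Pincherle's theorem.

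First, I would apply an equivalence transformation to the tail $\K_{n\ge 3} 1/b_n$. With multipliers $c_n=n-1$ for $n\ge 3$, the tail becomes
\[
\frac{2}{2M}\+\frac{2\cdot 3}{2M}\+\frac{3\cdot 4}{2M}\+\cds,
\]
so the partial denominators are constant and the partial numerators are $(n-2)(n-1)$ (up to a harmless rescaling of the first term). The right-hand side of \eqref{cf1} then reads $1/(1+1/(2M-1+T))$, where $T$ is this polynomial-coefficient fraction. Inverting the outer two layers, the theorem is equivalent to a closed-form identity for $T$ in terms of $\Phi(-1,1,M+1)$. I expect this identity to involve $\Phi(-1,1,M+1)$ and elementary terms only, by the relation $\Phi(-1,1,M)=1/M-\Phi(-1,1,M+1)$.

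Second, I would treat $T$ through Pincherle's theorem. The fraction $T$ is associated with the recurrence
\[
y_{n+1}=2M\,y_n+n(n+1)\,y_{n-1}
\]
(with indices shifted to match the transformed fraction). If $(y_n)$ is a minimal solution, then $T=-y_1/y_0$ up to sign and normalization. To find such a solution I would try the ansatz
\[
y_n=(-1)^n n!\int_0^1 t^{M-1}\Bigl(\frac{1-t}{1+t}\Bigr)^n\frac{dt}{1+t},
\]
or a close variant, since $\int_0^1 t^{M}/(1+t)\,dt=\Phi(-1,1,M+1)$ produces exactly the required value at $n=0$. The recurrence would be verified by differentiating $t^{M}\bigl((1-t)/(1+t)\bigr)^n$ and integrating by parts; the boundary terms vanish because $\Re M>0$. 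The first few $y_n$ then give $y_1/y_0$ in terms of $\Phi(-1,1,M+1)$. If the ansatz fails, the fallback is to write $y_n$ as a ${}_3F_2$ at argument $-1$ and check the recurrence by contiguous relations.

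Third, and I expect this to be the main obstacle, I must show that this solution is minimal and that $T$ converges for all complex $M$ with $\Re M>0$. For minimality, the integrand is concentrated near $t=0$, so Laplace's method should give $|y_n|\approx n!\,n^{-\Re M}$ up to constants. A dominant solution should instead grow like $n!\,n^{\Re M}$, which I would obtain from a Birkhoff--Adams type analysis of the characteristic behaviour $y_{n+1}/y_n\sim \pm n$ with correction terms $\pm M/n$. This gives $y_n/\tilde y_n\to 0$ for a second solution $\tilde y_n$. For convergence itself, after a further equivalence transformation the fraction has the form $\K\,1/(2M\,d_n)$ with $\sum |d_n|=\infty$ (the series $\sum 1/n$). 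Combined with $\Re M>0$, this puts it in the setting of the Stern--Stolz / Van Vleck parabola-type theorem, so it converges. Pincherle's theorem then identifies $T$, and unwinding the first two layers yields \eqref{cf1}.
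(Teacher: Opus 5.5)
Your route differs from the paper's and can be made to work, but the step everything rests on, the explicit solution, is wrong as written. With $y_n=(-1)^n n!\,I_n$, your recurrence $y_{n+1}=2My_n+n(n+1)y_{n-1}$ is equivalent to $(n+1)(I_{n+1}-I_{n-1})=-2MI_n$. Your $I_n=\int_0^1 t^{M-1}u^n\,dt/(1+t)$, with $u=(1-t)/(1+t)$, does not satisfy it. For $M=1$ the substitution $t\mapsto u$ gives $I_n=\int_0^1 u^n\,du/(1+u)$, so $I_0=\log 2$, $I_1=1-\log 2$, $I_2=\log 2-\tfrac12$, and $2(I_2-I_0)=-1\neq-2I_1$. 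In fact these $I_n$ satisfy the inhomogeneous relation $I_n+I_{n-1}=1/n$ instead.

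If you try $\int_0^1 t^{M-1}h(t)u^n\,dt$ and integrate by parts, the recurrence forces $h'/h=-2/(1-t^2)$, i.e.\ $h=u$. So the correct solution is
\[
y_n=(-1)^n n!\,w_n,\qquad w_n=\int_0^1 t^{M-1}\Bigl(\frac{1-t}{1+t}\Bigr)^{n+1}dt,\qquad n\geq-1 .
\]
Since $u^2-1=2tu'$, one integration by parts gives $(n+1)(w_{n+1}-w_{n-1})=-2Mw_n$ for $n\geq0$. The boundary terms vanish because $u(1)=0$ and $\Re M>0$. The initial values are $w_{-1}=1/M$ and $w_0=2\Phi(-1,1,M)-1/M=1/M-2\Phi(-1,1,M+1)$. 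Then $T=-y_1/y_0=w_1/w_0$, and the recurrence at $n=0$ gives $2M+T=w_{-1}/w_0=1/(Mw_0)$. Hence
\[
\frac{2M-1+T}{2M+T}=1-Mw_0=2M\Phi(-1,1,M+1),
\]
which is exactly the theorem. Incidentally, Euler's integral gives $w_n=\frac{\Gamma(M)(n+1)!}{\Gamma(M+n+2)}\,{}_2F_1(n+1,M;M+n+2;-1)$, so the natural fallback is a ${}_2F_1$ at $-1$, not a ${}_3F_2$.

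Your minimality step is fine in outline. Watson's lemma gives $w_n\sim\Gamma(M)(2n)^{-M}$, and the distinct-root case of Wong--Li (characteristic roots $\pm1$) gives a second solution $\sim(-1)^n n^{M}$. You can, however, avoid asymptotics altogether.

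\begin{itemize}
\item Minimality is unaffected by the common factor $(-1)^n n!$.
\item For two solutions of the $w$-recurrence, $w_{n+1}v_n-w_nv_{n+1}=-(w_nv_{n-1}-w_{n-1}v_n)$. So the Casoratian has constant modulus, and two independent solutions cannot both tend to $0$.
\item $T$ converges by Van Vleck applied to the original tail $b_n=2M/(n-1)$, $n\geq3$. Apply it to the tail only, not the whole fraction, since $2M-1$ need not lie in the right half-plane.
\item Pincherle therefore supplies a minimal solution. Since $w_n\to0$ by dominated convergence, $w$ must be that solution, and Pincherle then also gives $w_0\neq0$.
\end{itemize}

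Finally, the convergents of the full fraction are the images of those of $T$ under the M\"obius map $x\mapsto(2M-1+x)/(2M+x)$. Your closed form therefore already shows the limit is finite. The paper needs a separate argument for this ($\Re T\geq0$, hence $T\neq-2M$).

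For comparison, the paper does not use Pincherle for this theorem. It takes the tail with partial numerators $(n+1)(n+2)$ and denominators $2M$ and, by iterated Bauer--Muir transformations, turns it into $3$ plus the fraction with numerators $-2(n+2)(n+M)$ and denominators $3n+2M+5$. This identity is proved first for real $M>0$ by a contraction argument, then extended to $\Re M>0$ via Stieltjes--Vitali and the identity theorem. The result is evaluated with the known continued fraction for ${}_2F_1(a,b;c;z)/{}_2F_1(a+1,b+1;c+1;z)$ at $z=-1$ and simplified with contiguous relations.

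Your corrected route is shorter and self-contained. It works directly for complex $M$, needs no analytic continuation or contiguous relations, and the value drops out of $w_{-1}$, $w_0$ and one step of the recurrence. It is essentially the strategy the paper itself uses for Theorem~\ref{conj_2}.
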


\begin{theorem}\label{conj_2}
Let $M\in\mathbb C$ satisfy $\Re M >0$, and let
\[
b_0=0,\qquad
b_{2n-1}=M+1,\qquad
b_{2n}=\frac{M}{n^2},\qquad n\in\mathbb{N}.
\]
Then
\begin{equation}\label{cf2}
2M\Phi(-1,\,2,\,M+1)
=
\K_{n=1}^{\infty}\frac{1}{b_n}.
\end{equation}
\end{theorem}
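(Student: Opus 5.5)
Theorem \ref{conj_2} is the statement to be proved. The plan is to turn the continued fraction $\K(1/b_n)$ into an equivalent continued fraction with polynomial elements. That one should be the even contraction of a classical continued fraction for $\Phi(-1,2,M+1)=\sum_{k\ge0}(-1)^k/(k+M+1)^2$.

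\textbf{Step 1: equivalence transformation.} With multipliers $r_{2n}=n^2$ and $r_{2n-1}=1$, the fraction $\K(1/b_n)$ is equivalent to $\K(a_n'/b_n')$ with
\[
b_{2n}'=M,\qquad b_{2n-1}'=M+1,\qquad a_{2n}'=n^2,\qquad a_{2n+1}'=n^2 ,
\]
and $a_1'=1$. The even part of this fraction is then computed by the standard contraction formula. This produces a fraction whose $n$th denominator is quadratic in $M$ and $n$, and whose numerators are quartic, of the form $-n^4$ up to shifts. The two fractions have the same value as long as the odd and even convergents do not diverge separately, which is checked in Step 3.

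\textbf{Step 2: identify the contracted fraction as a ratio of minimal solutions.} Consider the tails
\[
T_m=\sum_{k\ge m}\frac{(-1)^{k}}{(k+M+1)^2}.
\]
These can be written as ${}_3F_2$-type terminating-free series, or equivalently as $\int_0^1 x^{M+m}\log(1/x)/(1+x)\,dx$. I would look for a three-term recurrence
\[
y_{n+1}=\beta_n y_n+\alpha_n y_{n-1},
\]
whose coefficients are exactly the contracted elements from Step 1, satisfied by a suitable sequence $y_n$ built from these integrals. This mirrors the Apéry-type derivation, and integrals of the shape
\[
\int_0^1 \frac{x^{M}\,(x(1-x))^n\log x}{(1+x)^{n+1}}\,dx
\]
are the natural first candidates. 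The recurrence would be verified by creative telescoping, or by the contiguous relations for ${}_3F_2$.

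A second solution $q_n$ of the same recurrence consists of polynomials in $M$. With these two solutions in hand, $y_n=p_n-q_n\Phi$ holds with $p_n,q_n$ the convergent numerators and denominators up to normalization. The initial values $y_0,y_1$ are matched directly to $2M\Phi$ and the first two convergents.

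\textbf{Step 3: minimality and convergence.} This is the main obstacle. By Pincherle's theorem, the continued fraction converges to $-y_0/y_{-1}$, in the appropriate normalization, exactly when $(y_n)$ is a minimal solution. So I would do an asymptotic analysis of both solutions:
\begin{itemize}
\item the integral decays like $\rho^{-n}$, by a saddle point at the maximum of $x(1-x)/(1+x)$, where $\rho=(1+\sqrt2)^2$ or a similar ratio;
\item the polynomial solution $q_n$ grows like $\rho^{n}$.
\end{itemize}
Minimality follows from these two rates. The hypothesis $\Re M>0$ enters through the convergence of the integrals, through the non-vanishing of $q_n$ (needed for uniform control in $M$), and through the endpoint behaviour of $x^M$. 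Analytic continuation in $M$ can then extend the identity from real $M>0$ to the half-plane, with Stieltjes–Vitali used to pass to the limit uniformly on compacta. Finally, Step 3 also gives the convergence needed in Step 1 to transfer the result back from the contraction to the original fraction $\K(1/b_n)$.
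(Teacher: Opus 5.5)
Your Step 1 is correct. The multipliers give $\K(a_n'/b_n')$ as you say, and its even part is
\[
\frac{M}{M(M+1)+1}\-\frac{1^4}{M(M+1)+5}\-\frac{2^4}{M(M+1)+13}\-\cds\-\frac{n^4}{M(M+1)+n^2+(n+1)^2}\-\cds
\]
The gap is in Steps 2--3: they import the Apéry template into a situation where it does not apply. Put $u_n=(n!)^2v_n$ in the recurrence $u_{n+1}=\bigl(M(M+1)+n^2+(n+1)^2\bigr)u_n-n^4u_{n-1}$ of this fraction. It becomes
\[
(n+1)^2(v_{n+1}-v_n)-n^2(v_n-v_{n-1})=M(M+1)v_n .
\]
Its characteristic polynomial is $(\rho-1)^2$, and the indicial equation $\alpha(\alpha+1)=M(M+1)$ gives solutions behaving like $n^{M}$ and $n^{-M-1}$. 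The ratio of two solutions does not depend on any normalization $y_n\mapsto w_ny_n$. So the minimal solution beats the dominant one only by a factor of order $n^{-(2M+1)}$, never by $\rho^{-2n}$ with $\rho=(1+\sqrt2)^2$.

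Consequently, the two rates you announce in Step 3 are false, and ``minimality follows from these two rates'' has no content. The integrals $\int_0^1x^M(x(1-x))^n\log x\,(1+x)^{-n-1}\,dx$ are designed precisely to produce the geometric decay $(3-2\sqrt2)^n$ from the saddle point, so they are the wrong kind of candidate here. Minimality is instead the delicate double-root (exceptional) case of the Birkhoff--Adams / Wong--Li theory, and it hinges on $\Re(2M+1)>0$.

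Independently of this, Step 2 never supplies what everything rests on: an explicit minimal solution, a proof that it satisfies the recurrence, and an evaluation of its initial ratio. The paper does this with the \emph{odd} part instead. Its tail is $\K_{n=1}^{\infty}\frac{-1}{2+M(M+1)/(n+1)^2}$, with recurrence $k^2(y_{k+1}-2y_k+y_{k-1})=M(M+1)y_k$. The paper then:
\begin{itemize}
\item exhibits the solution $y_k=\int_0^1t^{M-1}(1-t)^k\,{}_2F_1(M+1,M+1;2M+2;t)\,dt=\frac{k!}{(M)_{k+1}}\,{}_3F_2(M,M+1,M+1;2M+2,M+k+1;1)$;
\item verifies the recurrence by integrating by parts against the hypergeometric differential equation;
\item proves $y_k\sim\Gamma(M)k^{-M}$;
\item uses Wong--Li to show that the other solutions grow like $k^{M+1}$.
\end{itemize}
Even with the solution in hand, turning $-y_2/y_1$ into $2M\Phi(-1,2,M+1)$ takes a ${}_3F_2(1)$ contiguous relation, Gauss's sum, Thomae's transformation and a double-integral evaluation. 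So ``matching the initial values directly'' is a substantial step, not a formality.

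Finally, Pincherle applied to the even part gives only convergence of the even part, which does not imply convergence of $\K(1/b_n)$. The right tool is Van Vleck's theorem on the original fraction: the $b_n$ lie in a sector $|\arg w|\leqslant\pi/2-\varepsilon$ and $\sum|b_{2n-1}|=\sum|M+1|=\infty$. Once the original fraction converges, every contraction of it has the same value.
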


\begin{theorem}\label{conj_3}
Let $M\in\mathbb C$ satisfy $\Re M >0$, and let
\begin{align*}
b_0=2M-1,\qquad
b_n=2M\frac{2n+1}{n(n+1)},\qquad n\in\mathbb N.
\end{align*}
Then
\begin{equation}\label{cf3}
2M^2\Phi(1,\,2,\,M+1)
=2M^2\zeta(2,\,M+1)=b_0+
\K_{n=1}^{\infty}\frac{1}{b_n}.
\end{equation}
\end{theorem}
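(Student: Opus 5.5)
The plan is to reduce the claim to Pincherle's theorem for a three-term recurrence with polynomial coefficients. We then identify a minimal solution explicitly through generalized hypergeometric functions.

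\textbf{Step 1: clear denominators.} First I would apply an equivalence transformation with multipliers $c_n=n(n+1)/(2M)$, chosen so that $c_nb_n=2n+1$. This rewrites the tail as
\[
\K_{n=1}^{\infty}\frac{1}{b_n}=\K_{n=1}^{\infty}\frac{a_n}{2n+1},\qquad a_1=\frac1M,\quad a_n=\frac{(n-1)n^2(n+1)}{4M^2}\ (n\geq 2).
\]
This is a fraction of Stieltjes/Ramanujan type for the trigamma-like quantity $\zeta(2,M+1)$. After rescaling $M$ out of the numerators, the associated recurrence is
\[
y_n=(2n+1)y_{n-1}+\frac{(n-1)n^2(n+1)}{4M^2}\,y_{n-2}.
\]
Its characteristic behaviour shows that its solutions grow like $n!^2$ times powers of $n$. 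The dominant and subdominant growth rates differ only by a power of $n$ depending on $M$. So minimality has to be established by a finer asymptotic analysis, and this is where $\Re M>0$ enters.

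\textbf{Step 2: explicit solutions.} Next I would find two explicit solutions $\{u_n\}$ and $\{v_n\}$. I would try the ansatz of a terminating $_3F_2(-n,n+1,\cdot;\cdot,\cdot;1)$, times gamma-factors in $M$ and $n$, for the denominators $B_n$. For the minimal solution I would take a non-terminating $_3F_2$ at unit argument, equivalently a tail series
\[
v_n=\Gamma\text{-factor}\times\sum_{k\geq0}\frac{(k+1)_n^2}{(M+1+k)_{n+1}^2}-\text{type sum}.
\]
Such a series reduces at $n=0$ to $\zeta(2,M+1)=\sum_{k\geq0}(M+1+k)^{-2}$. To verify the recurrence I would use contiguous relations for $_3F_2$, or directly telescoping in the summation index (creative telescoping / Zeilberger). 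At that point $v_{-1}$ and $v_0$ are known in closed form.

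\textbf{Step 3: minimality and conclusion.} Then I would establish minimality, $v_n/u_n\to0$. I would derive asymptotics of the $_3F_2$ representations: $v_n$ via a Laplace-type estimate of the sum, where the terms concentrate near small $k$ and give decay of order $n^{-2\Re M}$ relative to $u_n$. Since $\Re M>0$, this ratio tends to $0$. Pincherle's theorem then gives
\[
\K_{n=1}^{\infty}\frac{a_n}{2n+1}=-\frac{v_0}{v_{-1}}
\]
(up to the normalisation of $a_1$). Substituting the closed forms yields $2M^2\zeta(2,M+1)-(2M-1)$, which is the claim.

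\textbf{Main obstacle.} I expect the main difficulty to be guessing the correct hypergeometric form of the minimal solution and proving the asymptotic separation $v_n/u_n\to0$. Both solutions have the same leading factorial growth, so only the power-of-$n$ correction distinguishes them.
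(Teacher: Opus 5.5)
The overall strategy (Pincherle plus an explicit minimal solution) is legitimate, but the proposal stops where the proof would have to start. Moreover, the one concrete indication you give for the minimal solution cannot be right.

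After your equivalence transformation (with $c_0=1$), every solution of your recurrence is $y_n=x_n\,n!\,(n+1)!/(2M)^n$, where $x_n=b_nx_{n-1}+x_{n-2}$. The minimal solutions are the multiples of $x_n=B_nK-A_n$, where $A_n,B_n$ are the canonical numerators and denominators of $\K_{n=1}^{\infty}\frac{1}{b_n}$ and $K$ is its value. So $x_{-1}=-1$ and $x_0=K=2M^2\zeta(2,M+1)-2M+1$.

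Now take a family of the kind you describe: $v_n=\gamma_nS_n$ with $\gamma_n/\gamma_0\in\mathbb C(M)$ and $S_n=\sum_{k\ge0}R_n(k)$, where $R_n$ is rational with double poles at $k=-M-1-j$. Partial fractions give $S_n=P_nZ+Q_n$ with $P_n,Q_n\in\mathbb C(M)$ and $Z:=\zeta(2,M+1)$, and you normalise $S_0=Z$. The recurrence at $n=1$ gives $v_{-1}=M(v_1-3v_0)=\gamma_0(\alpha Z+\beta)$ with $\alpha,\beta\in\mathbb C(M)$, hence $-v_0/v_{-1}=-Z/(\alpha Z+\beta)$. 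Set this equal to $2M^2Z-(2M-1)$. The coefficients of $Z^2$ and $Z^0$ force $\alpha=\beta=0$, and then the coefficient of $Z$ gives a contradiction. The comparison is legitimate because $\zeta(2,M+1)$ has infinitely many poles and so is not algebraic over $\mathbb C(M)$.

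Thus no such family is minimal, and ``substituting the closed forms'' can never produce the rational part $-(2M-1)$. A minimal solution must start from
$v_0\propto 2M^2\zeta(2,M+1)-2M+1=M^2\sum_{k\ge0}(k+M)^{-2}(k+M+1)^{-2}$,
with $v_{-1}$ free of $Z$. You would still have to find its general term, verify the recurrence and prove minimality, and none of this is done. (Also, after removing the factor $n!(n+1)!/(2M)^n$, the two solutions behave like $n^{2M}$ and $(-1)^nn^{-2M}$. So the relative decay is $n^{-4\Re M}$, not $n^{-2\Re M}$. This is harmless for minimality, but it shows the heuristic is not yet calibrated.)

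The paper never touches the recurrence. It gets convergence from Van Vleck's theorem and then applies the equivalence transformation with $s_n=M\,n(n+1)/(4n+2)$ for odd $n$ and $s_n=M^{-1}n(n+1)/(4n+2)$ for even $n$. This turns $\K_{n=1}^{\infty}\frac{1}{b_n}$ into a fraction with partial denominators alternately $M^2$ and $1$, and partial numerators $M/3$ followed by $n^2(n^2-1)/(4(4n^2-1))$. That fraction differs from Lange's continued fraction for $\psi_1$ (Lemma~\ref{polygamma1}) only in the first numerator. Hence $K=2M^2\psi_1(M)-2M-1$, and $\psi_1(M)=\psi_1(M+1)+M^{-2}$ finishes the proof.

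If you want a self-contained argument in your spirit, the natural input is the Stieltjes representation
\[
2M^2\zeta(2,M+1)-2M+1=2M\int_0^\infty\frac{\pi t^2}{\sinh^2(\pi t)}\,\frac{dt}{M^2+t^2},\qquad \Re M>0.
\]
Its weight $\pi t^2/\sinh^2(\pi t)=|\Gamma(1+it)|^4/\pi$ is the continuous Hahn weight. Carrying that through (denominators, second-kind functions, minimality) essentially reproves Lange's result, which is exactly the part your proposal leaves open.
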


\begin{theorem}\label{conj_4}
Let $M\in\mathbb C$ satisfy $\Re M >0$, and let
\begin{align*}
b_0=M-1,\qquad
b_{2n-1}=\frac{2M}{n},\qquad
b_{2n}=\frac{M(2n+1)}{n(n+1)},\qquad n\in\mathbb N.
\end{align*}
Then
\begin{equation}\label{cf4}
2M^3\Phi(1,\,3,\,M+1)=2M^3\zeta(3,\,M+1)
=
b_0+\K_{n=1}^{\infty}\frac{1}{b_n}.
\end{equation}
\end{theorem}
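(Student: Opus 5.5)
The plan is to reduce \eqref{cf4} to a three-term recurrence with polynomial coefficients and then apply Pincherle's theorem, identifying the limit through an explicit minimal solution. Since the paper states that the proofs use equivalence transformations, hypergeometric functions and minimal solutions, I will build the argument from those ingredients.

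\textbf{Step 1: clear denominators.} First I remove the rational dependence on $n$ by an equivalence transformation with multipliers $c_0=1$, $c_{2n-1}=n$, $c_{2n}=n(n+1)$. This turns $b_0+\K(1/b_n)$ into a continued fraction with
\[
\text{partial denominators } c_{2n-1}b_{2n-1}=2M,\qquad c_{2n}b_{2n}=M(2n+1),
\]
and polynomial partial numerators $c_{k-1}c_k$, namely
\[
a_1=1,\qquad a_{2n}=n^2(n+1),\qquad a_{2n+1}=n(n+1)^2 .
\]
Next I take the even (or odd) contraction. This gives a continued fraction whose $n$-th partial denominator is a quadratic polynomial in $n$ and $M$, and whose partial numerators are $-P(n)$ with $P$ of degree six. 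The result should be of Apéry type, roughly $-n^6$ or a close shift of it, against a denominator of the form $(2n+1)(\alpha n^2+\alpha n+\beta M+\gamma)$. The even convergents of the original fraction are exactly the convergents of the contraction, so it suffices to prove that the contracted fraction converges to $2M^3\zeta(3,M+1)$. The odd convergents can then be controlled by the standard relation linking them to the even ones.

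\textbf{Step 2: two explicit solutions of the recurrence.} Let $y_{n+1}=\beta_n y_n+\alpha_n y_{n-1}$ be the recurrence attached to the contracted fraction, with $\beta_n$ its partial denominators and $\alpha_n$ its partial numerators. I look for two solutions in closed form.
\begin{enumerate}
\item \emph{A polynomial (dominant) solution} $B_n$. I expect it to be a terminating ${}_4F_3$ or ${}_3F_2$ in $M$, or a sum of the form $\sum_k \binom{n}{k}^2\binom{n+k}{k}^2$ deformed by $M$. I will guess it from the first few $B_n$ and verify the recurrence by Zeilberger's algorithm, or by checking a contiguous relation.
\item \emph{A second solution} of the form $R_n=\sum_{k\ge0} r_n(k)$. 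Here
\[
r_n(k)=\frac{\bigl((k+1)_n\bigr)^2\text{-type factors}}{(k+M+1)_{n+1}^{\,3}}\cdot(\text{polynomial in } k),
\]
which is the classical Ball/Rivoal-type rational-function sum generalizing Apéry's construction to the Hurwitz parameter. I will verify by telescoping (creative telescoping in $k$) that $R_n$ satisfies the same recurrence. I will then check that $R_n=B_n\cdot 2M^3\zeta(3,M+1)-A_n$ by comparing the initial values $n=0,1$. At $n=0$ this is essentially $\sum_k (k+M+1)^{-3}$.
\end{enumerate}

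\textbf{Step 3: minimality and convergence (main obstacle).} It remains to show that $R_n/B_n\to0$, so that $R_n$ is a minimal solution. By Pincherle's theorem this gives convergence to $-R_{-1}/R_0$, and after matching normalisations the limit equals $2M^3\zeta(3,M+1)$. The upper bound on $R_n$ is the easier half: I will write $R_n$ as a Beta-type triple integral or a contour integral and apply Laplace's method, which gives geometric decay of order $(\sqrt2-1)^{4n}$ times a power of $n$. The harder half is a lower bound on $|B_n|$ for complex $M$ with $\Re M>0$, since the positivity arguments available for real $M$ fail there. I would first try the Poincar\'e--Perron theorem: the characteristic roots of the normalised recurrence are $(1\pm\sqrt2)^4$, and they have distinct moduli. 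Hence every solution not asymptotic to the small root grows like $(1+\sqrt2)^{4n}$, and $B_n$ cannot be the small solution because $R_n$ already is and the two are independent. This argument needs the Casoratian $A_nB_{n-1}-A_{n-1}B_n=\prod\alpha_j\neq0$, and it needs $R_n\ne0$ for large $n$. I expect the second requirement, together with the dependence of the asymptotics on $M$ and its uniformity in $M$, to need the most care. The condition $\Re M>0$ should enter exactly there, through the convergence and nonvanishing of the integral representation of $R_n$.
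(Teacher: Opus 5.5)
Your Step~1 equivalence transformation is correct. The gap is the structural guess behind Steps~2--3, and it does not survive the actual computation.

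\textbf{The contraction is not of Ap\'ery type.} Carry out the odd contraction of your transformed fraction, whose partial denominators are $2M$ and $M(2n+1)$ and whose partial numerators are $n^2(n+1)$ and $n(n+1)^2$. Its generic partial numerators are $-k^4(k^2-1)$ and its partial denominators are $(2k+1)(k^2+k+2M^2)$; the even contraction has the same limiting behaviour. This is the shape you predicted, but the quadratic factor has leading coefficient $1$, not Ap\'ery's $17$.

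Normalise by $(k!)^3$. The recurrence becomes $Z_k-(2+3/k+O(k^{-2}))Z_{k-1}+(1+3/k+O(k^{-2}))Z_{k-2}=0$. Its characteristic polynomial is $(t-1)^2$, not $t^2-34t+1$. So there are no roots $(1\pm\sqrt2)^4$: $\rho=1$ is a double root. Moreover, it satisfies the Wong--Li auxiliary equation $a_1\rho+b_1=-3+3=0$. This is the same exceptional case as in the proof of Theorem~\ref{conj_2}, so every solution behaves like a power of $k$, possibly times $\log k$.

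This agrees with the original fraction. All its partial numerators are $1$ and $|b_n|\le 4|M|/n$, so $|B_n|=O(n^{4|M|})$. Consecutive convergents differ by $1/|B_nB_{n-1}|$, which is not geometrically small.

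\textbf{Consequences for your plan.} The predicted decay $(\sqrt2-1)^{4n}$ is false. The Poincar\'e--Perron argument with roots of distinct moduli is unavailable. Ball--Rivoal-type sums and $M$-deformations of $\sum_k\binom nk^2\binom{n+k}k^2$ solve hyperbolic Ap\'ery-type recurrences, so they cannot solve this one. Since no correct explicit solutions are produced, the identification of the limit with $2M^3\zeta(3,M+1)$ is missing, and that is the whole content of the theorem.

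Your ``main obstacle'' is also misplaced. Convergence for $\Re M>0$ follows at once from Van Vleck's theorem (Lemma~\ref{cf_convergence_1/bn}): every $b_n$ is a positive multiple of $M$, and $\sum|b_n|$ diverges like the harmonic series.

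\textbf{The missing idea.} Up to an equivalence transformation, this is the known Stieltjes-type continued fraction for the tetragamma function. The paper takes $s_n=M(n+1)/4$ for odd $n$ and $s_n=n(n+2)/(4M(n+1))$ for even $n$. This turns $\k(1/b_n)$ into a fraction with partial denominators alternating $M^2,1$ and partial numerators $e_1=M/2$, $e_{2n}=n^2(n+1)/(2(2n+1))$, $e_{2n+1}=n(n+1)^2/(2(2n+1))$. Apart from the first numerator, this is Lange's expansion of $\psi_2(M)$ (Lemma~\ref{polygamma2}). The relations $\psi_2(M+1)=\psi_2(M)+2/M^3$ and $\psi_2(M+1)=-2\zeta(3,M+1)$ then finish the proof.

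If you want to keep a recurrence-based proof, you need correct closed-form solutions of this parabolic recurrence. You also need an exceptional-case asymptotic analysis like the one in the proof of Theorem~\ref{conj_2}.
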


For the selected parameters $M$, several familiar constants with their simple shifts can be obtained as special cases of Theorems~\ref{conj_1}--\ref{conj_4}. For example, setting $M=1$ in Theorem~\ref{conj_1} we get $2\Phi(-1,1,2)=2(1-\log 2)$ and consequently an explicit continued fraction representation of transcendental number $\log 2$. Similarly, Theorem~\ref{conj_2} with $M=1$ yields $2\Phi(-1,2,2)=2-\pi^2/6$.
If $M\in\mathbb{N}$ and $\Re s>1$, then \cite[Eq.~(25.11.4)]{DLMF}
\begin{align*}
\zeta(s,M+1)=\zeta(s)-H_M^{(s)},
\qquad \zeta(s)=\sum_{k=1}^{\infty}\frac{1}{k^s},\qquad
H_M^{(s)}=\sum_{k=1}^{M}\frac{1}{k^s}.
\end{align*}
Consequently, Theorems~\ref{conj_3} and~\ref{conj_4} also yield explicit continued fraction representations of the constant $\zeta(2)=\pi^2/6$ and Apéry's constant $\zeta(3)$, involving their finite harmonic-number shifts, cf. \cite[pp.~153--154]{BerndtRamanujanNotebooksII}, \cite{Gutnik2010}. There are a few more particular examples
\begin{align*}
&\Phi\left(1,2,\frac{3}{2}\right)
=\frac{\pi^2}{2}-4,\qquad
\Phi\left(1,2,\frac{4}{3}\right)
=\frac{2\pi^2}{3}
+3\sqrt{3}\,\operatorname{Cl}_2\left(\frac{2\pi}{3}\right)-9,\\
&\Phi\left(1,2,\frac{5}{4}\right)
=\pi^2+8G-16,\qquad
\Phi\left(1,3,\frac{3}{2}\right)=7\zeta(3)-8,
\end{align*}
where $\operatorname{Cl}_2$ denotes the Clausen function 
\[
\operatorname{Cl}_2(\theta)
=-\int_0^\theta
\log\left(2\sin\frac{t}{2}\right)\,dt,
\qquad 0\leq\theta\leq2\pi,\qquad
\operatorname{Cl}_2\left(\frac{2\pi}{3}\right)
=0.6766277376\ldots
\]
and $G=1-1/9+1/25-1/49+\cdots=0.915965\ldots$ is the Catalan constant.

\subsection{Origin of the main results}

The origin of the continued fraction in Theorem~\ref{conj_1} was as follows. The second author did certain numerical computations that involve the nontrivial zeros of the Riemann zeta function. One of the computed numbers, denote it by $\nu$, turned out to be close to the value of $2M\Phi(-1,\,1,\,M+1)$ with a lucky choice of $M=\mathrm{LCM}(1,\dots,10)$. In this case, an initial fragment of the classical ``arithmetical'' continued fraction (i.\,e., with partial numerators equal to $1$ and partial denominators being natural numbers) for $\nu$ coincided with an initial fragment of the \eqref{cf1}. For more details, 
see \cite{Matiyasevich} and \cite[Sec.~A.5]{Zagier} where the underlying continued fraction is related to a continued fraction for a Bernoulli-number generating function. The continued fractions in Theorems~\ref{conj_2}--\ref{conj_4} were discovered numerically by analogy with Theorem \ref{conj_1}.

\subsection{Prior work}
We first recall the standard relations between the Lerch transcendent,
the Hurwitz zeta function, and the polygamma functions. For $k\in\mathbb{N}$, it can be shown, see \cite[Eq.~(25.11.12)]{DLMF}, that
\begin{align}\label{hurwitz_to_polygamma}
\Phi(1,\,k+1,\,M)=\zeta(k+1,M)
=
\frac{(-1)^{k+1}}{k!}\psi_k(M),
\quad
M\in\mathbb{C}\notin\{0,-1,-2,\ldots\},
\end{align}
where $\psi_k$ denotes the polygamma function of order $k$, i.e. the $k+1$'th derivative of $\log\Gamma(s)$. Also, by \cite[Sec.~25.14]{DLMF} and \cite[Sec.~5.15]{DLMF},
\begin{equation}\label{lerch_polygamma_difference}
\Phi(-1,k,M+1)
=
\frac{(-1)^k}{2^k(k-1)!}
\left(
\psi_{k-1}\left(\frac{M+1}{2}\right)
-
\psi_{k-1}\left(\frac{M+2}{2}\right)
\right),
\end{equation}
where $M\in\mathbb{C}\notin\{-1,\,-2,\,\ldots\}$.

Various continued fraction identities of the right-hand side of \eqref{hurwitz_to_polygamma} and \eqref{lerch_polygamma_difference} are reported in the encyclopedic dictionary \cite{Cohen2026}, where relations to Theorems \ref{conj_1}--\ref{conj_4} are the following:
\begin{itemize}
\item the tail of the continued fraction in Theorem \ref{conj_1} is equivalent to the parametric family associated with entry 6.10.5, by setting $z=M+1$, $u=1$, and $k=-1$;
\item the even part (see Lemma \ref{even_part}) of the continued fraction in Theorem \ref{conj_2} is equivalent to the continued fraction represented by entry 6.12.2 with $z=M+1$;
\item the continued fraction in Theorem \ref{conj_3} is equivalent to the continued fraction in entry 6.11.3 with $z=M$;
\item the continued fraction in Theorem \ref{conj_4} is equivalent to the continued fraction in entry 6.13.2 with $z=M$.
\end{itemize}
However, these entries provide the corresponding continued-fraction identities but do not provide the proofs, references, or the particular parameterizations used in Theorems \ref{conj_1}--\ref{conj_4}. In addition, the tail of the continued fraction appearing in Theorem \ref{conj_1} is closely related to the continued fraction in \cite[Eq. (A.46)]{Zagier}. Indeed, setting $X=2M,\,M>0$ there, the tail beginning with the third denominator in Theorem \ref{conj_1} coincides, after reindexing, with the corresponding tail in \cite[Eq. (A.46)]{Zagier}. Thus, the continued fraction in Theorem \ref{conj_1} can be obtained from the one in \cite[Eq. (A.46)]{Zagier} by a modification of its initial terms. Although a proof of the identity in \cite[Eq. (A.46)]{Zagier} is not provided there, the reference to \cite[Problem 5.6(a)]{Lando} provides a derivation: after the substitution \(s=i/X\) in the continued fraction for the Bernoulli generating function and an equivalence transformation, one obtains the continued fraction in \cite[Eq. (A.46)]{Zagier}. Also, the continued-fraction representation in Theorem \ref{conj_1} for $M>0$ appears in unpublished notes by Natalia Korskova back in 2014. However, the proof of Theorem \ref{conj_1} given in this paper differs essentially from the described ones, contains all the necessary details, and extends the underlying identity for $\Re M>0$. The proof of Theorem \ref{conj_2}, in turn, provides a direct derivation of the stated continued fraction and establishes its convergence under the same condition $\Re M>0$.

Although in Theorems \ref{conj_1}--\ref{conj_4} we prove only four explicit representations, it is possible to obtain further continued fraction representations for the Lerch
transcendent with other choices of the parameters. Our proofs of Theorems \ref{conj_3} and \ref{conj_4}, unlike those of Theorems \ref{conj_1} and \ref{conj_2}, rely on the known (proved) continued fraction representations of the polygamma function. Because of \eqref{hurwitz_to_polygamma}, the continued fraction representations of $\psi_k$, obtained through their Stieltjes-transform representations \cite{ShentonBowman1971}, \cite{Shenton1983}, yield continued fraction representations of the Hurwitz zeta-function. Although explicit closed-form formulas are available for the
continued fractions of the trigamma $\psi_1$ and
tetragamma $\psi_2$ functions, no comparable closed-form formulas
are known to us for the coefficients of the corresponding continued
fractions for $k\geq3$. For more information, see
\cite[pp.~229--232]{Handbook} and \cite{BonanHamadaJones2005}. 

\section{Preliminaries for the proofs}\label{preliminaries}

In this section, we collect the necessary statements about continued fractions and hypergeometric functions that are used in the proofs of Theorems \ref{conj_1}--\ref{conj_4}.

\subsection{Continued fractions}

We begin with the canonical even and odd parts of a continued fraction. If the continued fraction
\[
b_0+\K\!\left(\frac{a_n}{b_n}\right)
\]
has convergents $f_n,\, n\in\mathbb{N}_0$, then the convergents of its even and odd parts, respectively, are
\[
g_n:=f_{2n},\qquad
h_n:=f_{2n+1},\qquad n\in\mathbb{N}_0.
\]
The following two classical lemmas describe the canonical even and odd parts of a continued fraction.
\begin{lem}[Lorentzen\&Waadeland (2008)]\label{even_part}
The continued fraction $b_0+\k(a_n/b_n)$ has an even part if and only if $b_{2n}\neq0$ for all $n\in\mathbb{N}_0$. Its canonical even part is then given by
\[
b_0+\frac{b_2a_1}{b_2b_1+a_2}\-\frac{a_2a_3b_4/b_2}{a_4+b_3b_4+a_3b_4/b_2}\-\frac{a_4a_5b_6/b_4}{a_6+b_5b_6+a_5b_6/b_4}\-\cds
\]
\end{lem}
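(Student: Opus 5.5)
We prove Lemma~\ref{even_part} by working directly with the fundamental recurrences \eqref{fundamental_recurrence}. Our aim is to find a three-term recurrence that links $A_{2n}$, $A_{2n-2}$ and $A_{2n-4}$, and the same for $B_{2n}$. We then read off the partial numerators and denominators of the even part from it.

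\textbf{Necessity.} Suppose some $b_{2n}=0$ with $n\geq1$. Then $A_{2n}=a_{2n}A_{2n-2}$ and $B_{2n}=a_{2n}B_{2n-2}$, so $f_{2n}=f_{2n-2}$. This degenerate step cannot be realised as a step of a continued fraction whose $n$th approximant is $f_{2n}$ in canonical form. This is exactly the convention in Lorentzen--Waadeland: the even part is required to have nonvanishing partial numerators. (The condition for $n=0$ concerns $b_0$ only formally and imposes nothing.)

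\textbf{Sufficiency.} Assume every $b_{2n}\neq0$. Apply the recurrence twice and eliminate the odd-indexed term $A_{2n-1}$:
\[
A_{2n}=b_{2n}A_{2n-1}+a_{2n}A_{2n-2},\qquad A_{2n-1}=b_{2n-1}A_{2n-2}+a_{2n-1}A_{2n-3}.
\]
Write $A_{2n-3}$ using $b_{2n-2}A_{2n-3}=A_{2n-2}-a_{2n-2}A_{2n-4}$; this is where $b_{2n-2}\neq0$ is used. Substituting gives
\[
A_{2n}=\Bigl(a_{2n}+b_{2n}b_{2n-1}+\frac{a_{2n-1}b_{2n}}{b_{2n-2}}\Bigr)A_{2n-2}-\frac{a_{2n-2}a_{2n-1}b_{2n}}{b_{2n-2}}A_{2n-4}.
\]
The identical computation holds for $B_{2n}$.

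The initial step $n=1$ is handled separately. Since $A_0=b_0$ and $A_2=b_2(b_1b_0+a_1)+a_2b_0$, together with $B_0=1$ and $B_2=b_2b_1+a_2$, we get
\[
f_2=b_0+\frac{b_2a_1}{b_2b_1+a_2}.
\]

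With these, the pairs $(A_{2n},B_{2n})$ satisfy the fundamental recurrences of a continued fraction with the following data:
\[
\text{initial term } b_0,\qquad \text{first element } \frac{b_2a_1}{b_2b_1+a_2},\qquad \text{partial numerators } -\frac{a_{2n-2}a_{2n-1}b_{2n}}{b_{2n-2}},\qquad \text{partial denominators } a_{2n}+b_{2n-1}b_{2n}+\frac{a_{2n-1}b_{2n}}{b_{2n-2}}\quad(n\geq2).
\]
To close the argument, check that the starting values match: $A_{-1}=1$, $B_{-1}=0$ correspond to $A_0$, $B_0$. An induction on $n$ then shows that the $n$th approximant of the displayed fraction equals $A_{2n}/B_{2n}=f_{2n}$.

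\textbf{Main obstacle.} The delicate point is the bookkeeping at the start. The first element must be derived directly and not from the general formula, because that formula would involve $b_0$ in a denominator and $a_0$, which does not exist. One must also keep track of the "canonical" requirement, namely that the partial numerators are nonzero, when $a_n\neq0$ is assumed throughout.
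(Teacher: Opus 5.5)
The paper does not prove this lemma; it only cites Theorem~2.19 of Lorentzen--Waadeland. Your elimination argument is a correct, self-contained replacement for the sufficiency part and for the explicit formula. Removing $A_{2n-1}$, and then $A_{2n-3}$ via $b_{2n-2}A_{2n-3}=A_{2n-2}-a_{2n-2}A_{2n-4}$, gives exactly the stated $c_n,d_n$ for $n\geq2$. The induction then proves the stronger fact $P_n=A_{2n}$, $Q_n=B_{2n}$ for the canonical numerators and denominators of the even part, which is what ``canonical'' means.

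Two things to tighten in that part:
\begin{itemize}
\item Your sentence about $A_{-1},B_{-1}$ ``corresponding to'' $A_0,B_0$ is garbled. The base case is $P_{-1}=1$, $Q_{-1}=0$, $P_0=b_0=A_0$, $Q_0=1=B_0$. It is followed by the exact equalities $P_1=d_1b_0+c_1=A_2$ and $Q_1=d_1=B_2$, not just the ratio $f_2$.
\item Nonvanishing $c_n$ is where $b_{2n}\neq0$ itself enters, in addition to the division by $b_{2n-2}$.
\end{itemize}

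The necessity direction, by contrast, is only asserted: you never say why a step with $f_{2n}=f_{2n-2}$ cannot occur. The missing ingredient is the determinant formula. For any continued fraction $d_0+\K(c_n/d_n)$ with all $c_n\neq0$ one has
\[
P_nQ_{n-1}-P_{n-1}Q_n=-\prod_{k=1}^{n}(-c_k)\neq0,
\]
so consecutive approximants are distinct points of $\hat{\mathbb{C}}$. On the other hand,
\[
A_{2n}B_{2n-2}-A_{2n-2}B_{2n}
=b_{2n}\bigl(A_{2n-1}B_{2n-2}-A_{2n-2}B_{2n-1}\bigr)
=a_1a_2\cdots a_{2n-1}\,b_{2n}.
\]
Both pairs $(A_k,B_k)$ are nonzero since $a_k\neq0$, so $f_{2n}=f_{2n-2}$ in $\hat{\mathbb{C}}$ exactly when $b_{2n}=0$. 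Hence if some $b_{2n}=0$, no continued fraction whatsoever has approximants $g_n=f_{2n}$ for all $n$ — not only one ``in canonical form'', as you phrase it. This identity in fact yields both directions at once.

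Your remark on $n=0$ is right: the condition should be $b_{2n}\neq0$ for $n\in\mathbb{N}$, as in Lorentzen--Waadeland. Read literally with $n\in\mathbb{N}_0$, the ``only if'' would fail whenever $b_0=0$. An example is the fraction of Theorem~\ref{conj_2}, whose even part the paper itself invokes.
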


\begin{proof}
Theorem 2.19 in \cite{Lorentzen}.
\end{proof}

\begin{lem}[Lorentzen\&Waadeland (2008)]\label{odd_part}
The continued fraction $b_0+\k(a_n/b_n)$ has an odd part if and only iff $b_{2n+1}\neq0$ for all $n\in\mathbb{N}_0$. Its canonical odd part is then given by
\begin{align*}
&\frac{a_1}{b_1}-\frac{a_1a_2b_3/b_1}{b_1(a_3+b_2b_3)+a_2b_3}\-\frac{a_3a_4b_5b_1/b_3}{a_5+b_4b_5+a_4b_5/b_3} \\
&\-\frac{a_5a_6b_7/b_5}{a_7+b_6b_7+a_6b_7/b_5}\-\frac{a_7a_8b_9/b_7}{a_9+b_8b_9+a_8b_9/b_7}\-\cds
\end{align*}
\end{lem}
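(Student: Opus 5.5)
The plan is to derive the odd part by contracting the fundamental recurrences \eqref{fundamental_recurrence} two steps at a time, in the same way as for Lemma~\ref{even_part}. The statement is then a claim about which continued fraction has convergents $h_n=f_{2n+1}=A_{2n+1}/B_{2n+1}$.

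First, the elimination. From \eqref{fundamental_recurrence} we have
\[
A_{2n+1}=b_{2n+1}A_{2n}+a_{2n+1}A_{2n-1},\qquad A_{2n}=b_{2n}A_{2n-1}+a_{2n}A_{2n-2},
\]
and the same relations hold for $B$. The first relation, taken with index $n-1$, lets us write $A_{2n-2}=(A_{2n-1}-a_{2n-1}A_{2n-3})/b_{2n-1}$. This step needs $b_{2n-1}\neq0$. Substituting repeatedly gives a three-term recurrence
\[
A_{2n+1}=\Bigl(a_{2n+1}+b_{2n}b_{2n+1}+\frac{a_{2n}b_{2n+1}}{b_{2n-1}}\Bigr)A_{2n-1}-\frac{a_{2n-1}a_{2n}b_{2n+1}}{b_{2n-1}}A_{2n-3},
\]
and the same recurrence holds for $B_{2n+1}$. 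So for $n\geq2$ the odd-indexed numerators and denominators satisfy a continued-fraction recurrence. Its partial denominators are $a_{2n+1}+b_{2n}b_{2n+1}+a_{2n}b_{2n+1}/b_{2n-1}$ and its partial numerators are $-a_{2n-1}a_{2n}b_{2n+1}/b_{2n-1}$. These agree with the generic terms of the stated odd part.

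Second, the initial terms. Compute $A_1,B_1,A_3,B_3$ directly and match them with a continued fraction whose convergents are $h_0=a_1/b_1$ and $h_1=f_3$, written in the stated form. Since $b_0$ is absent in the displayed odd part, we either take $b_0=0$ or add $b_0$ throughout. The first two levels, $b_1(a_3+b_2b_3)+a_2b_3$ and $a_1a_2b_3/b_1$, together with the factor $b_1$ in the third numerator, come from a normalization. The recurrence has to start from $(A_1,B_1)=(a_1,b_1)$ rather than $(a_1,1)$, and an equivalence transformation rescales the second level by $b_1$; this produces the extra $b_1$ in $a_3a_4b_5b_1/b_3$. I would then check that the convergents of the displayed fraction equal $A_{2n+1}/B_{2n+1}$ by induction, using the recurrence above as the inductive step.

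Third, the "if and only if". Every step divides only by $b_1,b_3,\dots$. Conversely, if some $b_{2n+1}=0$, then a partial numerator of the contraction vanishes, or the contraction is undefined, so the odd part does not exist in the canonical sense of \cite{Lorentzen}. Alternatively, this is Theorem~2.19 in \cite{Lorentzen} applied to the fraction $0+\k$ with shifted indices, after viewing the odd part of $b_0+\k(a_n/b_n)$ as the even part of $b_0+\frac{a_1}{b_1}$ followed by the tail. The main obstacle is the bookkeeping at the start: the initial levels do not follow the generic pattern, and the factor $b_1$ must be tracked through the equivalence transformation.
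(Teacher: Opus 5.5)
Your argument is correct, but it takes a different route: the paper's proof is only a citation of Theorem~2.20 in \cite{Lorentzen}, whereas you derive the formula by contracting \eqref{fundamental_recurrence} two steps at a time. Your eliminated recurrence is right, and it already holds for $n=1$ (with $A_{-1}=1$, $B_{-1}=0$). Impose $D_0=1$ while keeping $C_k=A_{2k+1}$, $D_k=B_{2k+1}$ for $k\ge1$; this is equivalent to your rescaling by $1/b_1$ followed by the equivalence transformation with $r_1=b_1$, $r_n=1$ for $n\ge2$. It gives $d_1=B_3=b_1(a_3+b_2b_3)+a_2b_3$ and $c_1=(A_3B_1-A_1B_3)/b_1=-a_1a_2b_3/b_1$. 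From $B_5=d_2B_3-(a_3a_4b_5/b_3)B_1$ with $B_1=b_1$ you also get the extra factor $b_1$ in $c_2$.

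What your route buys is an explanation of the irregular first two levels, which the citation leaves unexplained. It also shows that the initial term is really $b_0+a_1/b_1$. The lemma as displayed tacitly takes $b_0=0$; this is harmless, since it is only used in the proof of Theorem~\ref{conj_2}, where $b_0=0$. The citation, in turn, buys brevity and the precise meaning of ``canonical''.

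Two places need tightening. For the converse, saying the contraction is ``undefined'' does not show that no odd part exists. Instead, note that $b_{2k+1}=0$ with $k\ge1$ gives $f_{2k+1}=a_{2k+1}A_{2k-1}/(a_{2k+1}B_{2k-1})=f_{2k-1}$. Consecutive approximants of a continued fraction with nonzero partial numerators are always distinct, so no odd part can exist; and $b_1=0$ gives $f_1=\infty$. Your alternative via Theorem~2.19 is phrased incorrectly. It should read: the odd part is $b_0+a_1/E$, where $E$ is the even part of $b_1+\k_{n\ge2}(a_n/b_n)$, rewritten via $b_0+\frac{a_1}{b_1+x}=b_0+\frac{a_1}{b_1}-\frac{a_1x/b_1}{b_1+x}$. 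This reproduces the displayed formula, including the factor $b_1$.
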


\begin{proof}
Theorem 2.20 in \cite{Lorentzen}.
\end{proof}

Another important operation involving continued fractions is the equivalence transformation, which leaves all convergents unchanged.

\begin{lem}[Lorentzen \& Waadeland (2008)]\label{equiv_cf}
The continued fractions $\k(a_n/b_n)$ and $\k(c_n/d_n)$ are equivalent, that is,
\[
\frac{A_n}{B_n}=\frac{C_n}{D_n},
\qquad n\in\mathbb N,
\]
if and only if there exists a sequence $(r_n)_{n\geqslant0}$ of nonzero complex numbers with $r_0=1$ such that
\[
c_n=r_nr_{n-1}a_n,
\qquad
d_n=r_nb_n,
\qquad n\in\mathbb N.
\]
\end{lem}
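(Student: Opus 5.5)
The plan is to prove both directions of Lemma~\ref{equiv_cf} straight from the fundamental recurrences \eqref{fundamental_recurrence}. Throughout, $A_n,B_n$ are the canonical numerators and denominators of $\k(a_n/b_n)$ (with $b_0=0$), and $C_n,D_n$ those of $\k(c_n/d_n)$.

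\emph{Sufficiency.} Suppose $c_n=r_nr_{n-1}a_n$ and $d_n=r_nb_n$ with $r_0=1$. Set $P_n:=\prod_{k=0}^{n} r_k$ and $P_{-1}:=1$. I claim that
\[
C_n=P_nA_n,\qquad D_n=P_nB_n,\qquad n\geq -1.
\]
This holds for $n=-1,0$ because $r_0=1$. For the induction step, substitute the hypothesis into the recurrence:
\[
C_n=d_nC_{n-1}+c_nC_{n-2}=r_nb_nP_{n-1}A_{n-1}+r_nr_{n-1}a_nP_{n-2}A_{n-2}.
\]
Since $r_{n-1}P_{n-2}=P_{n-1}$, this equals $r_nP_{n-1}\bigl(b_nA_{n-1}+a_nA_{n-2}\bigr)=P_nA_n$. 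The same computation gives $D_n=P_nB_n$. Because every $r_k\neq0$, we have $P_n\neq0$, so $C_n/D_n=A_n/B_n$ (including the value $\infty$ when $B_n=0$, the pair $(A_n,B_n)$ never vanishing simultaneously).

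\emph{Necessity.} This is the harder direction, and it needs care about what ``equivalent'' means when some $B_n$ vanishes. I would use the determinant formula
\[
A_nB_{n-1}-A_{n-1}B_n=(-1)^{n-1}a_1\cdots a_n,
\]
which follows by induction from the recurrence. For a well-defined continued fraction all $a_n\neq0$, so this determinant is nonzero and $(A_n,B_n)$, $(A_{n-1},B_{n-1})$ are linearly independent.

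Equality of the convergents as points of $\hat{\C}$ means $(C_n,D_n)$ is a nonzero multiple of $(A_n,B_n)$, say $(C_n,D_n)=P_n(A_n,B_n)$ with $P_n\neq0$ and $P_{-1}=P_0=1$. Define $r_n:=P_n/P_{n-1}$; then $r_0=1$. The recurrence for $C_n$ gives
\[
P_nA_n=d_nP_{n-1}A_{n-1}+c_nP_{n-2}A_{n-2},
\]
and the same relation holds with $B$ in place of $A$. Subtract $P_n$ times the original recurrence $A_n=b_nA_{n-1}+a_nA_{n-2}$ (and likewise for $B$). This yields
\[
(d_nP_{n-1}-P_nb_n)(A_{n-1},B_{n-1})+(c_nP_{n-2}-P_na_n)(A_{n-2},B_{n-2})=0.
\]
By the linear independence above, both coefficients vanish. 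Hence $d_n=r_nb_n$ and $c_n=a_nP_n/P_{n-2}=r_nr_{n-1}a_n$.

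The main obstacle is the convention for ``$A_n/B_n=C_n/D_n$'': it must be read projectively, i.e.\ as equal values in $\hat{\C}$, which is the definition in \cite{Lorentzen}. Since the paper cites this as Theorem~2.?\ in Lorentzen--Waadeland, it is also acceptable simply to refer to that source.
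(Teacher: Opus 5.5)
Your proof is correct. The paper gives no argument for this lemma and simply cites Theorem~2.14 of \cite{Lorentzen}. You instead derive it from the fundamental recurrences \eqref{fundamental_recurrence}.

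\begin{itemize}
\item For sufficiency, you use the scaling $C_n=P_nA_n$, $D_n=P_nB_n$ with $P_n=r_0r_1\cdots r_n$.
\item For necessity, you read equality of approximants projectively, in $\hat{\C}$, and then use the determinant formula $A_nB_{n-1}-A_{n-1}B_n=(-1)^{n-1}a_1\cdots a_n$. This gives linear independence of $(A_{n-1},B_{n-1})$ and $(A_{n-2},B_{n-2})$, which also holds at $n=1$, where these are $(0,1)$ and $(1,0)$.
\end{itemize}

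The citation is more economical. Your version makes explicit two hypotheses the statement relies on silently. First, $a_n\neq0$ and $c_n\neq0$, so that $(A_n,B_n)$ and $(C_n,D_n)$ never vanish and the factor $P_n$ is nonzero. Second, equality of convergents must be read in $\hat{\C}$ rather than in $\C$.

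Note also that the paper only ever uses the ``if'' direction. Examples are $r_1=2M$ in the proof of Theorem~\ref{conj_1}, and $s_n=Mn(n+1)/(4n+2)$ in the proof of Theorem~\ref{conj_3}. In each case all that is needed is that the factors are nonzero, which follows from $\Re M>0$. So your short sufficiency argument alone already covers every application in the paper.

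One small fix: replace your placeholder ``Theorem~2.?'' with Theorem~2.14 of \cite{Lorentzen}, which is the reference the paper uses.
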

\begin{proof} Theorem 2.14 in \cite{Lorentzen}. \end{proof}

The following is one of the main convergence criteria for continued fractions that we will use.

\begin{lem}[Van Vleck's Theorem, Lorentzen \& Waadeland (2008)]
\label{cf_convergence_1/bn}
Let $0<\varepsilon<\pi/2$, and suppose that
\[
b_n\in G_\varepsilon
:=
\left\{w\in\mathbb{C}:|\arg w|\leqslant
\frac{\pi}{2}-\varepsilon\right\}\cup\{0\},
\qquad n\in\mathbb{N},
\]
with $b_1\neq0$. Then the continued fraction $\k(1/b_n)$
converges if and only if
\[
\sum_{n=1}^{\infty}|b_n|=\infty.
\]
\end{lem}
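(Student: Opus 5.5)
Since this lemma is Theorem 4.29 of \cite{Lorentzen}, one could simply cite it. A self-contained proof would run as follows. Write $A_n,B_n$ for the canonical numerators and denominators from \eqref{fundamental_recurrence} with $a_n=1$ and $b_0=0$. Then $A_nB_{n-1}-A_{n-1}B_n=(-1)^{n-1}$, so
\[
f_n-f_{n-1}=\frac{(-1)^{n-1}}{B_nB_{n-1}}.
\]

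\emph{Necessity (Stern--Stolz).} Suppose $\sum|b_n|<\infty$. From the recurrences, $|A_n|,|B_n|\leq\prod_{k\leq n}(1+|b_k|)$, so $A_n$ and $B_n$ are bounded. Also $A_n-A_{n-2}=b_nA_{n-1}$, and similarly for $B_n$, so the increments are summable. Hence $A_{2n},B_{2n},A_{2n+1},B_{2n+1}$ converge to limits $A^{e},B^{e},A^{o},B^{o}$. Passing to the limit in the determinant identity gives $A^{o}B^{e}-A^{e}B^{o}=1$. Therefore the even and odd subsequences of $f_n$ cannot have the same limit in $\hat{\mathbb C}$, and the fraction diverges. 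Note that the sector hypothesis is not needed for this direction.

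\emph{Sufficiency.} First I show $B_n\neq0$. Put $h_n=B_n/B_{n-1}$, so $h_1=b_1$ and $h_n=b_n+1/h_{n-1}$. The closed sector $G_\varepsilon$ is closed under addition and under $w\mapsto1/w$. By induction, every $h_n$ lies in $G_\varepsilon\setminus\{0\}$, hence $B_n\neq0$ for all $n$.

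Next, consider the Möbius maps $S_n(w)=A_n+A_{n-1}w$ divided by $B_n+B_{n-1}w$, which satisfy $S_n(w)=S_{n-1}(1/(b_n+w))$. Each map $w\mapsto 1/(b_n+w)$ sends the closed right half-plane $H$ into itself. Hence the images $K_n=S_n(H)$ are nested closed discs, each containing $f_n=S_n(0)$ and $f_{n-1}=S_n(\infty)$. A direct computation gives the radius
\[
r_n=\frac{1}{2\,\Re\!\left(B_n\overline{B_{n-1}}\right)}.
\]
It therefore suffices to show $\Re(B_n\overline{B_{n-1}})\to\infty$: then the nested discs shrink to a point and $f_n$ converges.

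Substituting the recurrence, $B_n\overline{B_{n-1}}=b_n|B_{n-1}|^2+B_{n-2}\overline{B_{n-1}}$. Taking real parts and iterating, with the sector condition $\Re b_k\geq\sin\varepsilon\,|b_k|$, yields
\[
\Re\!\left(B_n\overline{B_{n-1}}\right)=\sum_{k=1}^n \Re(b_k)\,|B_{k-1}|^2\geq \sin\varepsilon\sum_{k=1}^n|b_k|\,|B_{k-1}|^2 .
\]

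\emph{Main obstacle.} The difficulty is showing this last sum diverges when $\sum|b_k|=\infty$, since $|B_{k-1}|$ might be small exactly where $|b_k|$ is large. I would argue by contradiction. Suppose $\sum|b_k||B_{k-1}|^2<\infty$. Using $B_k-B_{k-2}=b_kB_{k-1}$ and a Cauchy--Schwarz type estimate, the even and odd subsequences $B_{2n}$ and $B_{2n+1}$ would be Cauchy, hence converge. Their limits satisfy a nondegeneracy relation inherited from $|B_nB_{n-1}|\,|f_n-f_{n-1}|=1$ combined with boundedness of the discs $K_n$ (which are contained in $K_1$ for $n\ge1$). So $|B_{k-1}|$ is bounded below along the even indices or along the odd indices. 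The sector condition is what makes this step work: because each $h_n$ stays in $G_\varepsilon$, there is no cancellation, so $|B_n|$ cannot shrink. Then $\sum|b_k||B_{k-1}|^2=\infty$ on that parity class. To handle the other parity class, I would use the canonical even and odd parts (Lemmas~\ref{even_part}, \ref{odd_part}), or Stieltjes--Vitali on the family of parabola-type regions. This is the delicate step, and I would follow the argument in \cite{Lorentzen} closely there.
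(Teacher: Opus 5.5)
The paper does not prove this lemma; its proof is only the citation of Van Vleck's theorem, which it gives as Theorem~3.37 of \cite{Lorentzen}, so check your numbering. Your first sentence therefore follows the paper's route.

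Your self-contained attempt is correct as far as it goes. The Stern--Stolz necessity argument, $B_n\neq0$ via $h_n\in G_\varepsilon$, the nested discs $K_n$, the radius $r_n=1/(2\Re(B_n\overline{B_{n-1}}))$, and the identity $\Re(B_n\overline{B_{n-1}})=\sum_{k\le n}\Re(b_k)|B_{k-1}|^2$ all check out. But the decisive step of sufficiency, the divergence of $\sum_k|b_k||B_{k-1}|^2$, is not proved, and the sketch does not work as written.

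\emph{The Cauchy--Schwarz step is circular.} To bound $\sum|B_k-B_{k-2}|=\sum|b_k||B_{k-1}|$ by $\sum|b_k||B_{k-1}|^2$, you need either $\sum|b_k|<\infty$, which contradicts the hypothesis, or $\inf_k|B_k|>0$, which is exactly what you are trying to show.

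\emph{The ``no cancellation'' heuristic is false step by step.} Taking $b_1=b_2=e^{i(\pi/2-\varepsilon)}$ gives $|B_2|=|1+e^{i(\pi-2\varepsilon)}|=2\sin\varepsilon<1=|B_0|$. The remaining parity issue is then deferred to the book, so sufficiency is not established.

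The missing idea is to use the nondegeneracy you mention multiplicatively rather than additively. Since $f_n,f_{n-1}\in K_n\subseteq K_1$ and $K_1$ is the disc of diameter $1/\Re b_1$, you get $1/|B_nB_{n-1}|=|f_n-f_{n-1}|\leq 1/\Re b_1$, that is, $|B_nB_{n-1}|\geq\Re b_1$ for every $n$. Hence
\[
\left|\frac{|B_{n+1}|}{|B_{n-1}|}-1\right|
\leq\frac{|b_{n+1}|\,|B_n|}{|B_{n-1}|}
=\frac{|b_{n+1}|\,|B_n|^2}{|B_nB_{n-1}|}
\leq\frac{|b_{n+1}|\,|B_n|^2}{\Re b_1}=:\delta_n .
\]
Suppose $\sum_k\Re(b_k)|B_{k-1}|^2<\infty$. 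Since $\Re b_k\geq\sin\varepsilon\,|b_k|$, also $\sum_n\delta_n<\infty$. Then $|B_{n+1}|\geq(1-\delta_n)|B_{n-1}|$ with $\delta_n<1/2$ for $n\geq N$, so $\prod_{n\geq N}(1-\delta_n)>0$. Because no $B_n$ vanishes, $\eta:=\inf_n|B_n|>0$, for both parities simultaneously. Then $\sum_k|b_k|\leq\eta^{-2}\sum_k|b_k||B_{k-1}|^2<\infty$, contradicting the hypothesis. Thus $r_n\to0$, the nested discs shrink to a point, and $f_n$ converges. No appeal to even and odd parts or to Stieltjes--Vitali is needed.
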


\begin{proof}
This is Van Vleck's Theorem 3.37 in \cite{Lorentzen}.
\end{proof}

In some instances, the value of a continued fraction $\k(a_n/b_n)$ is determined by studying the associated three-term recurrence
\begin{align}\label{recurrence}
    x_n-b_nx_{n-1}-a_nx_{n-2}=0,\qquad n\in\N.
\end{align}

A sequence $(x_n)_{n\geqslant -1}$ satisfying \eqref{recurrence} is
called a solution of the recurrence. A nonzero solution $(x_n)$ of \eqref{recurrence} is called minimal if, for every linearly independent solution $(y_n)$, one has
\[
\frac{x_n}{y_n}\to0,
\qquad n\to\infty.
\]

The following classical result, due to Pincherle, establishes the connection between continued fractions and minimal solutions.

\begin{lem}[Pincherle (1894)]\label{Pincherle}
The continued fraction $\k(a_n/b_n)$ converges if and only if \eqref{recurrence} possesses a minimal solution $(x_n)_{\geq -1}$ with $x_0\neq0$. If the continued fraction below converges, then
\[
-\frac{x_{n-1}}{x_{n-2}}
=
\frac{a_n}{b_n}\+\frac{a_{n+1}}{b_{n+1}}\+\cds,
\qquad n\in\N,
\]
provided $x_n\neq0$ for all $n\in\mathbb{N}_0$.
\end{lem}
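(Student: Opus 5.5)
The plan is to prove Pincherle's lemma through the standard link between the continued fraction and the two canonical solutions of \eqref{recurrence}. Observe first that $(A_n)$ and $(B_n)$ from \eqref{fundamental_recurrence} (with $b_0=0$) are solutions of \eqref{recurrence}, with initial data $A_{-1}=1,\ A_0=0$ and $B_{-1}=0,\ B_0=1$. They are linearly independent, so every solution has the form $x_n=\alpha A_n+\beta B_n$. Throughout I will assume $a_n\neq0$ for all $n$, as is implicit in the paper's setting. Then the determinant formula
\[
A_nB_{n-1}-A_{n-1}B_n=-(-a_1)\cdots(-a_n)\cdot(-1)\neq0
\]
holds up to sign conventions, and it gives $f_n\neq f_{n-1}$.

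\emph{Convergence implies a minimal solution.} Suppose $f_n=A_n/B_n\to f$, and put $x_n:=A_n-fB_n$. For $n\geq0$ we have $x_n/B_n=f_n-f\to0$. Any solution independent of $(x_n)$ has the form $y_n=\alpha x_n+\beta B_n$ with $\beta\neq0$, so $x_n/y_n\to0$ and $(x_n)$ is minimal. It is nonzero because $x_{-1}=1$. The one subtlety is that $B_n$ may vanish for finitely many $n$; since $f_n$ converges, $B_n\neq0$ for all large $n$, which is all the limit statement needs. Finally, $x_0=-f$, so the condition $x_0\neq0$ amounts to $f\neq0$. I will reconcile this with the indexing in the statement below.

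\emph{A minimal solution implies convergence.} Let $(x_n)$ be minimal and write $x_n=\alpha A_n+\beta B_n$. If $\alpha=0$, then $B_n$ itself is minimal and $A_n/B_n\to\infty$; with $x_0\neq0$ this case is excluded, or it corresponds to convergence to $\infty$ in $\hat{\C}$. Otherwise, normalize to $\alpha=1$ and compare with the independent solution $(B_n)$. Minimality gives
\[
\frac{A_n+\beta B_n}{B_n}\to0,
\]
that is, $f_n\to-\beta$, so the continued fraction converges.

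\emph{The tail formula.} For the formula, apply the first part to the tail continued fraction $\K_{k\geq n}(a_k/b_k)$. Its recurrence is the shifted recurrence, and its minimal solution is the shifted minimal solution, because minimal solutions are unique up to a scalar factor. It remains to identify the value. From \eqref{recurrence}, with $r_n:=x_n/x_{n-1}$ (well defined when $x_n\neq0$),
\[
-\frac{x_{n-1}}{x_{n-2}}=\frac{a_n}{b_n-x_n/x_{n-1}}=\frac{a_n}{b_n+\left(-x_n/x_{n-1}\right)}.
\]
Iterating this $m$ times writes $-x_{n-1}/x_{n-2}$ as a finite continued fraction $\frac{a_n}{b_n}\+\cds\+\frac{a_{n+m}}{b_{n+m}+w_m}$ with $w_m=-x_{n+m}/x_{n+m-1}$. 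I will then use the standard expression for a modified approximant in terms of the tail numerators and denominators $A^{(n)},B^{(n)}$; this equals $\dfrac{\alpha' A^{(n)}_m+\beta' B^{(n)}_m}{\ldots}$ in terms of the tail's own solutions. Minimality then shows that this modified approximant has the same limit as the ordinary approximants, namely the tail value.

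The main obstacle is the bookkeeping. One part is the indexing: checking that the normalization $x_{-1}$ versus $x_0\neq0$ in the statement matches the shift, and that the nonvanishing hypothesis $x_n\neq0$ is exactly what makes each ratio finite. The other part is the degenerate cases ($\alpha=0$ or $B_n=0$ for finitely many $n$), which must be handled with care, possibly in $\hat{\C}$. Since the paper cites the result as classical, I would ultimately refer to \cite[Thm.~3.?]{Lorentzen} for these details.
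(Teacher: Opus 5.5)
The paper does not prove this lemma; it only cites Gautschi's Theorem~1.1. Your skeleton is the standard self-contained proof, and the case $\alpha\neq0$ is handled correctly: you write every solution as $\alpha A_n+\beta B_n$, take $x_n=A_n-fB_n$ for the forward direction, and compare a minimal solution with $B_n$ for the converse. (Minor: $A_nB_{n-1}-A_{n-1}B_n=-\prod_{k=1}^n(-a_k)$, but only its nonvanishing is used.)

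The gap is in the degenerate case and in the indexing you postpone. Since $b_0=0$, we have $A_{-1}=1$, $A_0=0$, $B_{-1}=0$, $B_0=1$. So $x_n=\alpha A_n+\beta B_n$ has $x_{-1}=\alpha$ and $x_0=\beta$. Your sentence ``with $x_0\neq0$ this case is excluded'' is therefore false. The case $\alpha=0\neq\beta$ is exactly a minimal solution $\beta B_n$ with $x_0\neq0$, and then $f_n\to\infty$. Symmetrically, your forward construction only yields $x_0=-f$, which vanishes when $f=0$.

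This cannot be fixed by bookkeeping, because with $x_0$ the equivalence fails in both directions.
\begin{itemize}
\item Take $a_n=1$ for all $n$, $b_n=1$ for $n\geq2$, and $b_1=-(\sqrt5-1)/2$. The tail $\K_{n\geq2}(1/1)$ equals $(\sqrt5-1)/2$, so $f_n\to\infty$ and the fraction has no finite value. Yet $B_n$ is minimal and $B_0=1$.
\item Move the bad denominator to $b_2$ and set $b_1=1$. Then the first tail diverges to $\infty$ and $f_n\to0$. Every minimal solution is a multiple of $A_n$, and $A_0=0$.
\end{itemize}

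What your argument actually proves is the lemma with the normalization $x_{-1}\neq0$, the entry in the denominator of $-x_0/x_{-1}$. That is also the only form the paper uses: in the proof of Theorem~\ref{conj_2} it evaluates $K=-x_0/x_{-1}$. You should state and prove that version and flag the $x_0$ in the statement as an index slip, rather than promise a reconciliation that cannot exist.

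With the converse stated this way, the tail formula is immediate. Your modified-approximant iteration ends in ``$\ldots$'' and presupposes that the tail converges, so it can be dropped. Since all $a_k\neq0$, shifting is a linear isomorphism of solution spaces. Hence $(x_{k+n-1})_{k\geq-1}$ is a minimal solution of the recurrence attached to $\K_{k\geq n}(a_k/b_k)$. Its first entry $x_{n-2}$ is nonzero: for $n\geq2$ by the proviso, and for $n=1$ because every minimal solution is a multiple of $A_n-fB_n$, whose entry with index $-1$ is $1$. The corrected converse then shows that this tail converges to $-x_{n-1}/x_{n-2}$.
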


\begin{proof}
See \cite[Thm. 1.1]{Gautschi1967}.
\end{proof}

\subsection{Hypergeometric functions}

The generalized hypergeometric function is defined by
\begin{align}\label{ghf}
{}_pF_q\!\left(\begin{matrix}
a_1,\ldots,a_p\\
b_1,\ldots,b_q
\end{matrix};z\right)
=
\sum_{n=0}^{\infty}
\frac{(a_1)_n\cdots(a_p)_n}
{(b_1)_n\cdots(b_q)_n}
\frac{z^n}{n!},
\end{align}
where $b_j\notin\{0,-1,-2,\ldots\}$, $j=1,\,2,\,\ldots,\,q$, 
and
\[
(a)_n=\frac{\Gamma(a+n)}{\Gamma(a)}
\]
denotes the Pochhammer symbol.

It is known that the generalized hypergeometric series \eqref{ghf} converges absolutely for $|z|<1$. Moreover, it converges at $z=1$ if
\[
\Re\!\left(\sum_{j=1}^{q}b_j-\sum_{i=1}^{p}a_i\right)>0,
\]
and at $z=-1$ if
\[
\Re\!\left(\sum_{j=1}^{q}b_j-\sum_{i=1}^{p}a_i+1\right)>0.
\]
For these and other properties of generalized hypergeometric functions, see \cite{Bailey}.
In particular, we write
\begin{align}\label{def_F21}
{}_2F_1(a,b;c;z)
=
\sum_{n=0}^{\infty}
\frac{(a)_n(b)_n}{(c)_n}\frac{z^n}{n!}.
\end{align}
It is known \cite[Sec.~1.2]{Slater} that $y={}_2F_1(a,b;c;z)$
satisfies the differential equation
\begin{align}\label{2F1_diff_eq}
z(1-z)\frac{d^2y}{dz^2}
+\bigl(c-(a+b+1)z\bigr)\frac{dy}{dz}
-aby=0,
\end{align}
while the generalized hypergeometric function \eqref{ghf}, $w={}_pF_q(a_1,\ldots,a_p;\,b_1,\ldots,b_q;\,z)$,
satisfies (see \cite[Sec.~2.1.2]{Slater})
\begin{align}\label{pFq_diff_eq}
\left(
z\frac{d}{dz}
\prod_{j=1}^{q}
\left(z\frac{d}{dz}+b_j-1\right)
-
z
\prod_{i=1}^{p}
\left(z\frac{d}{dz}+a_i\right)
\right)w=0.
\end{align}

We also use Euler's integral representation. In particular,
for $\Re b>0$ and $\Re(c-b)>0$, one has (see \cite[Sec.~1.6]{Slater})
\begin{align}\label{2F1_integral_form}
{}_2F_1(a,b;c;z)
&=
\frac{\Gamma(c)}
{\Gamma(b)\Gamma(c-b)}
\int_0^1
t^{b-1}(1-t)^{c-b-1}(1-zt)^{-a}\,dt.
\end{align}
More generally, for $\Re c>0$ and $\Re(d-c)>0$ (see \cite[Eq.~(4.1.2)]{Slater}),
\begin{align}\label{pFq_integral_form}\nonumber
{}_{p+1}F_{q+1}\left(
\begin{matrix}
a_1,\ldots,a_p,c\\
b_1,\ldots,b_q,d
\end{matrix}
;z
\right)
&=
\frac{\Gamma(d)}
{\Gamma(c)\Gamma(d-c)}
\\
&\quad\times
\int_0^1
t^{c-1}(1-t)^{d-c-1}
{}_pF_q\left(
\begin{matrix}
a_1,\ldots,a_p\\
b_1,\ldots,b_q
\end{matrix}
;zt
\right)\,dt.
\end{align}
We shall also use the following transformation due to Thomae \cite[Sec.~3.2, Eq.~(1)]{Bailey}. Let
\begin{align*}
s=e+f-a-b-c.
\end{align*}
If $\Re(a)>0$ and $\Re(s)>0$, then
\begin{align}\label{thomae}
\Fhyp{a,b,c}{e,f}{1}
&=
\frac{\Gamma(e)\Gamma(f)\Gamma(s)}
{\Gamma(a)\Gamma(s+b)\Gamma(s+c)}
\Fhyp{e-a,f-a,s}{s+b,s+c}{1}.
\end{align}
To study the growth of hypergeometric functions, we employ the following gamma-ratio asymptotic.
\begin{lem}\label{gamma_ratio_asymptotic}
Let $\alpha,\beta\in\mathbb{C}$ be fixed. Then
\[
\frac{\Gamma(j+\alpha)}{\Gamma(j+\beta)}
\sim j^{\alpha-\beta},
\qquad
j\to\infty.
\]
In particular,
\[
\left|
\frac{\Gamma(j+\alpha)}{\Gamma(j+\beta)}
\right|
=
O\!\left(j^{\Re(\alpha-\beta)}\right),
\qquad
j\to\infty.
\]
\end{lem}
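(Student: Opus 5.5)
The plan is to derive the statement from Stirling's formula. I will work with the logarithm of the gamma ratio and show that, for fixed $\alpha,\beta\in\mathbb{C}$,
\[
\log\Gamma(j+\alpha)-\log\Gamma(j+\beta)-(\alpha-\beta)\log j\to 0,\qquad j\to\infty .
\]
Exponentiating this gives the first claim $\Gamma(j+\alpha)/\Gamma(j+\beta)\sim j^{\alpha-\beta}$. The second claim follows by taking moduli: $|j^{\alpha-\beta}|=j^{\Re(\alpha-\beta)}$ for real $j>0$, and a sequence asymptotic to a nonvanishing quantity is bounded by a constant multiple of its modulus. Because $\alpha$ and $\beta$ are fixed, $j+\alpha$ and $j+\beta$ lie in the sector $|\arg w|\leq\pi-\delta$ for all large $j$, so Stirling's formula applies uniformly and no poles of $\Gamma$ are met.

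\emph{Step 1.} I will use Stirling's formula in the form
\[
\log\Gamma(w)=\left(w-\tfrac12\right)\log w-w+\tfrac12\log(2\pi)+O(|w|^{-1}),
\]
valid uniformly as $|w|\to\infty$ in such a sector, and substitute $w=j+\alpha$ and $w=j+\beta$.

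\emph{Step 2.} I will expand each logarithm as
\[
\log(j+\gamma)=\log j+\frac{\gamma}{j}+O(j^{-2}),\qquad \gamma\in\{\alpha,\beta\}.
\]
Inserting this into $(j+\gamma-\tfrac12)\log(j+\gamma)$ gives
\[
\left(j+\gamma-\tfrac12\right)\log j+\gamma+O(j^{-1}).
\]
Subtracting the two Stirling expressions term by term, the $\gamma$ coming from the $\log$ expansion cancels against $-(j+\gamma)$. What remains is $(\alpha-\beta)\log j+O(j^{-1})$, which is exactly the displayed limit.

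The only delicate point is the branch of the logarithm. I will take the principal branch throughout. For large $j$ the points $j+\alpha$ and $j+\beta$ stay near the positive real axis, so no branch ambiguity arises. As an alternative I could cite the standard result for this ratio (DLMF 5.11.12) directly, but the Stirling computation above is self-contained and short.
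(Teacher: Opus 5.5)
Your proof is correct and is essentially the paper's approach. The paper cites the gamma-ratio asymptotic from DLMF~\S5.11 and gets the $O$-bound by taking absolute values; you additionally write out the Stirling-formula derivation behind that citation, with the branch of the logarithm and the cancellation of the $\gamma$ terms handled correctly.
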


\begin{proof}
The first assertion is the standard gamma-ratio asymptotic
\cite[Sec.~5.11]{DLMF}. The second assertion follows immediately by
taking absolute values.
\end{proof}

\section{Proof of the first theorem}\label{sec1}

We first formulate and prove several auxiliary statements that are needed to prove Theorem \ref{conj_1}.

\subsection{Auxiliary results for the proof of Theorem \ref{conj_1}}

\begin{lem}\label{lerch_to_hyp}
If $\Re M>0$, then
\[
2M\Phi(-1,1,M+1)
=
\frac{2M}{M+1}\,
{}_2F_1(1,M+1;M+2;-1).
\]
\end{lem}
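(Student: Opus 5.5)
The plan is to compare the two sides termwise as power series, working first on the open unit disc and then passing to the boundary point $z=-1$. First I simplify the Pochhammer ratio. Since $(1)_n = n!$ and
\[
\frac{(M+1)_n}{(M+2)_n}=\frac{\Gamma(M+1+n)\Gamma(M+2)}{\Gamma(M+1)\Gamma(M+2+n)}=\frac{M+1}{M+1+n},
\]
the series \eqref{def_F21} gives, for $|z|<1$,
\[
\frac{1}{M+1}\,{}_2F_1(1,M+1;M+2;z)=\sum_{n=0}^{\infty}\frac{z^n}{n+M+1}=\Phi(z,1,M+1).
\]
Multiplying by $2M$ yields the claimed identity for every $|z|<1$.

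The only real issue is evaluating at $z=-1$, since neither series converges absolutely there. I would argue directly rather than via the convergence criterion quoted from \cite{Bailey}. On both sides the series at $z=-1$ is $\sum_{n\ge0}(-1)^n/(n+M+1)$, literally the same series. It therefore suffices to show that this series converges when $\Re M>0$.

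To prove convergence I group the terms in consecutive pairs:
\[
\frac{1}{2k+M+1}-\frac{1}{2k+M+2}=\frac{1}{(2k+M+1)(2k+M+2)}=O(k^{-2}).
\]
Hence the paired series converges absolutely. The individual terms tend to $0$ because $|n+M+1|\ge n+1$ when $\Re M>0$. Together these give convergence of the full series. Note that $\Re M>0$ also keeps $M+1$ away from the poles $\{0,-1,-2,\dots\}$, so $\Phi(-1,1,M+1)$ is defined by \eqref{Lerch_def} as this convergent series.

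Since the two sides are the same convergent numerical series at $z=-1$, the identity follows with no need for an Abel-type limit. If one prefers to interpret ${}_2F_1$ at $-1$ as a limit $z\to-1^+$ instead, Abel's theorem transfers the identity from $|z|<1$. I expect no genuine obstacle here; the only point needing care is this boundary convergence, which the pairing argument settles.
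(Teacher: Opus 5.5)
Your proposal is correct and is essentially the paper's proof: the same Pochhammer simplification $(M+1)_n/(M+2)_n=(M+1)/(M+n+1)$ turns the ${}_2F_1$ series at $z=-1$ termwise into $(M+1)\sum_{n\ge0}(-1)^n/(n+M+1)$. Your pairing argument for convergence at $z=-1$ makes explicit a point the paper leaves to the general ${}_pF_q$ convergence criterion quoted in its preliminaries, and your preliminary pass through $|z|<1$ is harmless but not needed.
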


\begin{proof}
Using the identity $\Gamma(s+1)=s\Gamma(s)$ and
\[
(1)_n=n!,\qquad \frac{(M+1)_n}{(M+2)_n}
=
\frac{\Gamma(M+n+1)\Gamma(M+2)}
{\Gamma(M+1)\Gamma(M+n+2)}
=
\frac{M+1}{M+n+1},
\]
we obtain
\begin{align*}
{}_2F_1(1,M+1;M+2;-1)
&=
\sum_{n=0}^{\infty}
\frac{(1)_n(M+1)_n}{(M+2)_n}
\frac{(-1)^n}{n!} \\
&=
(M+1)
\sum_{n=0}^{\infty}
\frac{(-1)^n}{M+n+1}
=
(M+1)\Phi(-1,1,M+1).
\end{align*}
\end{proof}

\begin{lem}\label{2F1_ratio}
Let $a,b\in\C$, $c\in\C\setminus\Z_0^-$, and $\Re z<1/2$, where $\Z_0^-=\{0,-1,-2,\ldots\}$.
Then the continued fraction below converges and
\[
\frac{{}_2F_1(a,b;c;z)}
     {{}_2F_1(a+1,b+1;c+1;z)}
=
\frac{c-(a+b+1)z}{c}
+
\frac{1}{c}
\K_{n=1}^{\infty}
\frac{c_n(z-z^2)}{e_n+d_nz},
\]
where
\[
c_n=(a+n)(b+n),\qquad
d_n=-(a+b+2n+1),\qquad
e_n=c+n,\qquad n\in\mathbb{N}.
\]
\end{lem}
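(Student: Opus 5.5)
We set up a three-term recurrence among contiguous ${}_2F_1$'s, solve it by a minimal solution, and then apply Pincherle's Lemma~\ref{Pincherle}. Write
\[
y_n:={}_2F_1(a+n,b+n;c+n;z),\qquad n\in\mathbb N_0 .
\]

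\textbf{Step 1: the recurrence.} We want the contiguous relation
\[
c_n(z-z^2)\,y_{n+1}=\bigl(e_n+d_nz\bigr)\,y_n\cdot\frac{(c+n)}{(c+n)}\;\cdots
\]
in its correctly normalized form. Concretely, differentiating the series gives $y_n'=\frac{(a+n)(b+n)}{c+n}y_{n+1}$. Plugging $y_0$ (shifted by $n$) into the hypergeometric equation \eqref{2F1_diff_eq}, with $a,b,c$ replaced by $a+n,b+n,c+n$, gives
\[
(c+n)(c+n+1)\,y_n-(c+n+1)\bigl(c+n-(a+b+2n+1)z\bigr)y_{n+1}-(a+n+1)(b+n+1)(z-z^2)\,y_{n+2}=0 .
\]
Here $c+n-(a+b+2n+1)z$ equals $e_{n}-1+d_{n}z$ up to the index bookkeeping, which we will fix carefully. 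Dividing by $(c+n)y_{n+1}$ yields
\[
\frac{y_n}{y_{n+1}}=\frac{c+n-(a+b+2n+1)z}{c+n}+\frac{(a+n+1)(b+n+1)(z-z^2)}{(c+n)(c+n+1)}\cdot\frac{y_{n+2}}{y_{n+1}}\cdot\frac{c+n+1}{1}\cdots
\]
Iterating this, and using the equivalence transformation of Lemma~\ref{equiv_cf} with $r_n=1/(c+n)$ type factors, produces exactly the displayed continued fraction with partial numerators $c_n(z-z^2)$ and denominators $e_n+d_nz$. The $n=0$ case gives the prefactor $\frac{c-(a+b+1)z}{c}$ and the overall factor $1/c$.

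\textbf{Step 2: identify a minimal solution.} Set $x_n:=\bigl(\prod\text{normalizing factors}\bigr)\,y_{n}$, so that $(x_n)$ solves the recurrence \eqref{recurrence} with $a_n=c_n(z-z^2)$ and $b_n=e_n+d_nz$. Pincherle (Lemma~\ref{Pincherle}) then gives convergence, and the value as the ratio $x_{n-1}/x_{n-2}$, provided $(x_n)$ is minimal and $x_n\neq0$. For minimality we compare with a second solution. The characteristic behaviour is as follows: dividing the recurrence by $n^2$, the ratios $t=x_{n}/x_{n-1}\cdot n$ satisfy $-z(1-z)t^2+(1-2z)t-1\approx 0$ in the normalized form, with roots $t=1/(1-z)$ and $t=1/z$ up to sign conventions. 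By Poincaré–Perron, the two solutions grow like $n!\,(1-z)^{-n}$ and $n!\,z^{-n}$ times powers of $n$. The condition $\Re z<1/2$ is exactly $|z|<|1-z|$. Hence the dominant/recessive split is strict, and the solution built from $y_n$ is the one with the smaller geometric rate. To verify that $y_n$ realizes that rate, I would use Euler's integral \eqref{2F1_integral_form}:
\[
y_n\propto\frac{\Gamma(c+n)}{\Gamma(b+n)\Gamma(c-b)}\int_0^1 t^{b+n-1}(1-t)^{c-b-1}(1-zt)^{-a-n}\,dt .
\]
A Laplace-type estimate at $t=1$ gives $y_n\sim C\,n^{\kappa}(1-z)^{-n}$, and the gamma prefactors are controlled by Lemma~\ref{gamma_ratio_asymptotic}. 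When $\Re b\le0$ or $\Re(c-b)\le0$, I would first use a contiguous shift in $n$, or analytic continuation in the parameters, to reduce to the integrable case.

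\textbf{Main obstacle.} The hard part is Step 2. We must make the asymptotics of $y_n$ rigorous for arbitrary complex $a,b,c$ and for all $z$ with $\Re z<1/2$, including points with $|z|\ge1$ where the series diverges. There we rely on the integral form for the analytic continuation, whose cut $[1,\infty)$ is avoided since $\Re z<1/2$. We must also ensure $y_n\neq0$ for large $n$, which follows from the nonvanishing leading asymptotic constant. Finitely many initial zeros can be handled by applying Pincherle to a tail and unwinding the finite part. The degenerate cases where some $c_n=0$ (that is, $a$ or $b$ a negative integer) terminate the continued fraction and are checked directly.
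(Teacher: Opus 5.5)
Your route differs from the paper's. The paper gives no argument here: it simply quotes N\"orlund's continued fraction from the Handbook, (15.3.13a)--(15.3.13b). You instead reconstruct the standard proof: a contiguous relation, Lemma~\ref{Pincherle}, and minimality from asymptotics. The plan is sound, and it shows exactly where $\Re z<1/2$ enters, namely as $|z|<|1-z|$. Your insistence on the analytic continuation is also justified. At $z=-1$ the series ${}_2F_1(a+n,b+n;c+n;-1)$ diverges once $n\geq\Re(c-a-b)+1$, so already ${}_2F_1(3,M+1;M+3;-1)$ in Lemma~\ref{cf_evaluated} is defined only by continuation. Step~1 needs neither an equivalence transformation nor any index fixing. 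Your displayed recurrence is correct, and with $u_n:=(c+n)y_n/y_{n+1}$ it reads exactly $u_n=e_n+d_nz+c_{n+1}(z-z^2)/u_{n+1}$ for $n\in\N_0$ (with $e_0=c$, $d_0=-(a+b+1)$), while $y_0/y_1=u_0/c$. In particular $c+n-(a+b+2n+1)z$ equals $e_n+d_nz$, not $e_n-1+d_nz$.

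Step~2, however, has two genuine defects. \emph{First, your rates mix two normalizations.} The rates $(1-z)^{-n}$ and $z^{-n}$ belong to $y_n$-type sequences, while the factor $n!$ belongs to $x_n$. Explicitly, the solution of $x_n=b_nx_{n-1}+a_nx_{n-2}$ attached to $y$ is
\[
x_n=y_{n+2}\prod_{k=1}^{n+1}\frac{-c_k(z-z^2)}{c+k},\qquad n\geq-1,
\]
so that $-x_0/x_{-1}=c_1(z-z^2)y_2/((c+1)y_1)$ is the value you need. For $x_n/n!$ the limiting characteristic equation is $\rho^2-(1-2z)\rho-z(1-z)=0$, with roots $1-z$ and $-z$. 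Then $y_{n+1}/y_n\to(1-z)^{-1}$ gives $x_n/(nx_{n-1})\to-z$, the root of smaller modulus. Minimality then needs Perron's theorem, i.e.\ the existence of a solution with the dominant ratio. Alternatively, use Poincar\'e's theorem together with the Casoratian relation $D_n=-a_nD_{n-1}$, which forbids two independent solutions on the recessive root.

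\emph{Second, your reduction for Euler's integral fails as stated.} Shifting $n$ repairs $\Re(b+n)>0$ but never changes $c-b$; the symmetry $a\leftrightarrow b$ helps only when $\Re(c-a)>0$. Moreover, an asymptotic relation does not survive analytic continuation in $(a,b,c)$ without uniform error bounds. Use Pfaff's transformation instead:
\[
y_n=(1-z)^{-a-n}\,{}_2F_1\Bigl(a+n,\,c-b;\,c+n;\,\frac{z}{z-1}\Bigr),
\]
where $|z/(z-1)|<1$ is again exactly $\Re z<1/2$. Termwise $(a+n)_k/(c+n)_k\to1$, with a bound $O(k^{|a-c|})$ uniform in large $n$. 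Dominated convergence then gives $y_n\sim(1-z)^{c-a-b-n}$ for all admissible $a,b,c$, with a nonzero constant, which also yields $y_n\neq0$ for large $n$. Finally, set aside $z=0$, where all partial numerators vanish, together with the terminating cases $c_n=0$.
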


\begin{proof}
See \cite[(15.3.13a)--(15.3.13b)]{Handbook}. The convergence convention for continued-fraction representations used there is stated on p.~4 of \cite{Handbook}.
\end{proof}

\begin{lem}\label{cf_evaluated}
If $\Re M>0$, then the continued fraction below converges and
\begin{equation*}
\K_{n=1}^{\infty} \frac{-(n+2)(2n+2M)}{3n+2M+5}=(M+2)\frac{_2F_1(2,\,M,\,M+2,\,-1)}{_2F_1(3,\,M+1,\,M+3,\,-1)}-(2M+5).
\end{equation*}
\end{lem}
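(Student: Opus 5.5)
Apply Lemma~\ref{2F1_ratio} with $a=2$, $b=M$, $c=M+2$, $z=-1$. Then $\Re z=-1<1/2$, and $c=M+2\notin\Z_0^-$ because $\Re M>0$. The lemma therefore gives convergence of its continued fraction together with
\[
\frac{{}_2F_1(2,M;M+2;-1)}{{}_2F_1(3,M+1;M+3;-1)}
=\frac{M+2+(M+3)}{M+2}+\frac{1}{M+2}\K_{n=1}^{\infty}\frac{-2c_n}{e_n-d_n}.
\]
Here $z-z^2=-2$, $c_n=(n+2)(n+M)$, $d_n=-(M+2n+3)$ and $e_n=M+2+n$, so that
\[
e_n+d_nz=e_n-d_n=3n+2M+5.
\]
The partial numerators are $-2(n+2)(n+M)=-(n+2)(2n+2M)$, exactly the numerators in the statement. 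The partial denominators $3n+2M+5$ also match.

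Multiplying by $M+2$ and subtracting $2M+5$ then gives the identity, since $(M+2)\cdot\frac{2M+5}{M+2}=2M+5$. The only point to check is the initial term: $\frac{c-(a+b+1)z}{c}=\frac{M+2+M+3}{M+2}=\frac{2M+5}{M+2}$, which is exactly what the subtraction removes.

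The one point needing care is that multiplying the lemma's identity by $M+2$ and subtracting is legitimate. This requires the lemma's convergence statement to be in the ordinary sense of convergents. Under the convention of \cite{Handbook} this holds, because convergence there refers to the convergents of exactly this continued fraction, and scaling and shifting by constants preserve convergence. Nonvanishing of the denominator ${}_2F_1(3,M+1;M+3;-1)$ is implicit in the lemma's equality.

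If one wants extra security, the convergence can also be seen directly. Apply an equivalence transformation (Lemma~\ref{equiv_cf}) to reduce to unit numerators, then use Pincherle's Lemma~\ref{Pincherle} with the contiguous ${}_2F_1$ family as the minimal solution. I expect this to be unnecessary given Lemma~\ref{2F1_ratio}.
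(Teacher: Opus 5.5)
Your proposal is correct and is essentially the paper's proof: a direct specialization of Lemma~\ref{2F1_ratio} with $a=2$, $b=M$, $c=M+2$, $z=-1$, followed by multiplying by $M+2$ and subtracting the initial term $2M+5$. Your parameter checks ($z-z^2=-2$, $c_n=(n+2)(n+M)$, $e_n+d_nz=3n+2M+5$) are accurate, and the suggested Pincherle fallback is not needed.
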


\begin{proof}
Apply Lemma \ref{2F1_ratio} with $a=2,\,b=M,\,c=M+2,\,z=-1$.
\end{proof}

\begin{lem}\label{lem:BM}
Let $a,b,c,\alpha,\beta\in\mathbb{C}$, with $\beta\neq0$ and $\beta(\alpha+\beta)=1$,
and define
\[
\mu_m:=
\K_{n=1}^{\infty}
\frac{(n+a+m)(n+b)}
{\alpha n+c+m(\alpha+\beta)},
\qquad m\in\mathbb{N}_0.
\]
Assume that, for every $m\in\mathbb{N}_0$,
\begin{align*}
\beta(a+m+1)+c+(m-b)(\alpha+\beta)\neq0,
\end{align*}
and that, for every $m\in\mathbb{N}_0$ and $n\in\mathbb{N}$, $n+a+m\neq0$.

Assume further that the continued fractions $\mu_m$ converge for all
$m\in\mathbb{N}_0$ and denote $\operatorname{BM}(\mu_m)$ the Bauer--Muir transformation of $\mu_m$. Then
\begin{align*}
\operatorname{BM}(\mu_m)
=
\beta(a+m+1)
+
\frac{\theta_{m+1}}
{\delta_{m+1}+\mu_{m+1}},
\end{align*}
where
\[
\theta_{m+1}
=
-\beta(a+m+1)
\Bigl(
\beta(a+m+1)+c+(m-b)(\alpha+\beta)
\Bigr)
\]
and
\[
\delta_{m+1}
=
\alpha+c+m(\alpha+\beta)+\beta(a+m+2).
\]
\end{lem}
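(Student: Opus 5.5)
The plan is to apply the classical Bauer--Muir transformation (Lorentzen \& Waadeland, Sec.~2.4) to $\mu_m$ with a well-chosen linear modifying sequence, and then to check that the transformed continued fraction has exactly the claimed shape. I recall the transformation first. For $b_0+\k(a_n/b_n)$ and a sequence $(w_n)_{n\ge0}$ with
\[
\lambda_n:=a_n-w_{n-1}(b_n+w_n)\neq0,\qquad n\in\mathbb{N},
\]
the Bauer--Muir transform is
\[
b_0+w_0+\frac{\lambda_1}{b_1+w_1}\+\frac{c_2}{d_2}\+\frac{c_3}{d_3}\+\cds,
\qquad
c_n=a_{n-1}\frac{\lambda_n}{\lambda_{n-1}},\quad
d_n=b_n+w_n-w_{n-2}\frac{\lambda_n}{\lambda_{n-1}}.
\]
Here $b_0=0$, $a_n=(n+a+m)(n+b)$ and $b_n=\alpha n+c+m(\alpha+\beta)$. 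Since the claimed initial term is $\beta(a+m+1)$, I take
\[
w_n=\beta(n+a+m+1),\qquad n\in\mathbb{N}_0.
\]

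The key step is computing $\lambda_n$; I expect this to be the only non-routine point. Put $N=n+a+m$. Then
\[
w_{n-1}(b_n+w_n)=\beta N\bigl((\alpha+\beta)n+\beta(a+m+1)+c+m(\alpha+\beta)\bigr).
\]
Using $\beta(\alpha+\beta)=1$, the $n^2$-terms cancel against those of $a_n$, and $\lambda_n$ collapses to
\[
\lambda_n=N\bigl(b-m-\beta^2(a+m+1)-\beta c\bigr).
\]
Multiplying the bracket by $\beta(\alpha+\beta)=1$ once more, this becomes
\[
\lambda_n=-\beta(n+a+m)\bigl(\beta(a+m+1)+c+(m-b)(\alpha+\beta)\bigr).
\]
By the hypotheses $\beta\neq0$, $n+a+m\neq0$ and the assumed nonvanishing of the last factor, we get $\lambda_n\ne0$ for all $n$, so the transform is well defined. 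Evaluating at $n=1$ gives $\lambda_1=\theta_{m+1}$. Also $b_1+w_1=\alpha+c+m(\alpha+\beta)+\beta(a+m+2)=\delta_{m+1}$.

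The remaining coefficients follow from the ratio $\lambda_n/\lambda_{n-1}=(n+a+m)/(n+a+m-1)$, which is the same for every $n\ge2$. This gives
\[
c_n=(n+a+m)(n-1+b),
\]
and, after the factor $n+a+m-1$ cancels in $w_{n-2}\lambda_n/\lambda_{n-1}$,
\[
d_n=\alpha n+c+m(\alpha+\beta)+\beta.
\]
Reindexing with $k=n-1\ge1$, these read $(k+a+m+1)(k+b)$ and $\alpha k+c+(m+1)(\alpha+\beta)$. These are exactly the partial numerators and denominators of $\mu_{m+1}$. Hence the tail $\frac{c_2}{d_2}\+\frac{c_3}{d_3}\+\cds$ equals $\mu_{m+1}$, which converges by assumption. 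So
\[
\operatorname{BM}(\mu_m)=\beta(a+m+1)+\frac{\theta_{m+1}}{\delta_{m+1}+\mu_{m+1}},
\]
as claimed. Everything after the computation of $\lambda_n$ is routine bookkeeping; the main care needed is the double use of $\beta(\alpha+\beta)=1$ and the observation that $d_1$ involves no $w_{-1}$ term.
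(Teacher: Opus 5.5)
Your proposal is correct and follows the paper's proof essentially step for step. You use the same modifying sequence $w_n=\beta(n+a+m+1)$, the same factorization of $\lambda_n$ via $\beta(\alpha+\beta)=1$, and the same identification of the transformed tail with $\mu_{m+1}$; the only difference is that you index the new coefficients as $c_n,d_n$ for $n\geq 2$, where the paper writes $c_{n+1},d_{n+1}$ for $n\geq 1$, which is just a shift.
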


\begin{proof}
Fix $m\in\mathbb{N}_0$ and write
\[
a_n=(n+a+m)(n+b),
\qquad
b_n=\alpha n+c+m(\alpha+\beta).
\]
Apply the Bauer--Muir transformation
\cite[Thm.~2.18]{Lorentzen}
with the modifying sequence
\begin{align}\label{modifying_sequence}
w_n=\beta(n+a+m+1),\qquad n\in\N_0.
\end{align}
The quantities $\lambda_n$ in the Bauer--Muir transformation \cite[Eq. (2.3.2)]{Lorentzen} are
\[
\lambda_n=a_n-w_{n-1}(b_n+w_n).
\]
Using $\beta(\alpha+\beta)=1$, we obtain
\begin{align*}
\lambda_n
&=(n+a+m)
\Bigl[
n+b
-\beta\bigl(
\alpha n+c+m(\alpha+\beta)
+\beta(n+a+m+1)
\bigr)
\Bigr]\\
&=(n+a+m)
\Bigl[
(1-\beta(\alpha+\beta))n
+b-\beta c
-\beta^2(a+m+1)
-m\beta(\alpha+\beta)
\Bigr]\\
&=-\beta(n+a+m)
\Bigl(
\beta(a+m+1)+c+(m-b)(\alpha+\beta)
\Bigr).
\end{align*}
By the assumptions, $\lambda_n\neq0$ for every $n\in\N$. Consequently,
\begin{align*}
\frac{\lambda_{n+1}}{\lambda_n}
&=
\frac{n+a+m+1}{n+a+m},\\
w_{n-1}\frac{\lambda_{n+1}}{\lambda_n}
&=
\beta(n+a+m+1),\\
c_{n+1}:=a_n\frac{\lambda_{n+1}}{\lambda_n}
&=
(n+a+m+1)(n+b),
\end{align*}
and
\begin{align*}
&d_{n+1}:=b_{n+1}+w_{n+1}
-w_{n-1}\frac{\lambda_{n+1}}{\lambda_n}\\
&\quad=
\alpha(n+1)+c+m(\alpha+\beta)
+\beta(n+a+m+2)
-\beta(n+a+m+1)\\
&\quad=
\alpha n+c+(m+1)(\alpha+\beta).
\end{align*}
Therefore, by \cite[Eq. (2.3.3)]{Lorentzen},
with
\begin{align*}
w_0&=\beta(a+m+1),\\
\lambda_1
&=
-\beta(a+m+1)
\Bigl(
\beta(a+m+1)+c+(m-b)(\alpha+\beta)
\Bigr)
=\theta_{m+1},\\
b_1+w_1
&=
\alpha+c+m(\alpha+\beta)+\beta(a+m+2)
=\delta_{m+1}.
\end{align*}
we obtain
\[
\operatorname{BM}(\mu_m)
=w_0+
\frac{\lambda_1}
{b_1+w_1+\mu_{m+1}}
.
\]
\end{proof}
\begin{lem}\label{cf_transformed}
Let $M\in\mathbb{C}$ satisfy $\Re M>0$. Then
\begin{equation}
\K_{n=1}^{\infty}
\frac{(n+1)(n+2)}{2M}
=
3+
\K_{n=1}^{\infty}
\frac{-2(n+2)(n+M)}
{3n+2M+5}.
\label{cf_transformed_identity}
\end{equation}
\end{lem}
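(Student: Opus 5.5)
The plan is to evaluate both sides of \eqref{cf_transformed_identity} independently through Pincherle's theorem (Lemma~\ref{Pincherle}) and hypergeometric functions, and then to compare the two closed forms. The right-hand side is essentially already evaluated. Apply the equivalence transformation of Lemma~\ref{equiv_cf} with all $r_n=1$, and note that $-2(n+2)(n+M)=-(n+2)(2n+2M)$. Hence the right-hand tail is exactly the continued fraction of Lemma~\ref{cf_evaluated}. That lemma gives convergence for $\Re M>0$ and the value
\[
3+\K_{n=1}^{\infty}\frac{-2(n+2)(n+M)}{3n+2M+5}
=(M+2)\frac{{}_2F_1(2,M;M+2;-1)}{{}_2F_1(3,M+1;M+3;-1)}-2M-2 .
\]
So it remains to show that the left-hand side converges to this same quantity.

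For the left-hand side I would start with convergence. An equivalence transformation (Lemma~\ref{equiv_cf}) turns $\k\bigl((n+1)(n+2)/2M\bigr)$ into the form $\k(1/\tilde b_n)$. Here $\tilde b_n$ equals $2M$ times a positive real factor, built from alternating products of ratios of factorial-type terms. Since $\Re M>0$, every $\tilde b_n$ lies in a sector $G_\varepsilon$, and $\sum|\tilde b_n|=\infty$ because the positive factors behave like constants or like $1/n$. Van Vleck's theorem (Lemma~\ref{cf_convergence_1/bn}) then gives convergence.

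For the value, I would look at the recurrence
\[
x_n=2M\,x_{n-1}+(n+1)(n+2)\,x_{n-2}
\]
and construct its minimal solution explicitly. After the normalisation $x_n=(n+2)!\,y_n$ (or a similar factorial normalisation), the coefficients become rational of low degree. I then expect a solution of the form $y_n=\int_0^1 t^{n+\gamma}(1+t)^{\delta}\cdots\,dt$, that is, a ${}_2F_1$ at argument $-1$ with parameters shifted by $n$, which can be checked using the integral representation \eqref{2F1_integral_form}. Pincherle then gives the left-hand side as $-x_0/x_{-1}$.

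The final step is to reduce $-x_0/x_{-1}$, using Gauss contiguous relations, to the ratio ${}_2F_1(2,M;M+2;-1)/{}_2F_1(3,M+1;M+3;-1)$ above. To prove minimality, I would compare this integral solution with a second, dominant solution using Lemma~\ref{gamma_ratio_asymptotic}.

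An alternative route that avoids finding the minimal solution is the Bauer--Muir transformation (Lemma~\ref{lem:BM}) with $a=1$, $b=2$, $\alpha=0$, $c=2M$ and $\beta=\pm1$. I would try this first because it is purely algebraic. However, a direct computation indicates that linear modifying sequences keep the partial denominators constant, so they cannot by themselves produce the $3n$ growth on the right. Bauer--Muir would therefore have to be combined with a contraction (Lemma~\ref{even_part} or Lemma~\ref{odd_part}) followed by an equivalence transformation. The main obstacle in either route is the same: identifying the correct hypergeometric minimal solution, or the correct modifying sequence, for the left-hand recurrence and proving that it is minimal. I would attack this by matching the ratio $x_n/x_{n-1}$ to the tails $\mu_m$ supplied by Lemma~\ref{2F1_ratio}.
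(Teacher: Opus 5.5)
Your main route works and differs genuinely from the paper's; it needs one correction in the ansatz and one extra input for minimality.

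\textbf{What the paper does.} It never looks for a minimal solution of the left-hand recurrence. For real $M>0$ it applies Lemma~\ref{lem:BM} with $\alpha=0$, $\beta=1$, $a=2$, $b=1$, $c=2M$ to the whole family $\mu_m$ in \eqref{mu_def}, and uses positivity to get $\operatorname{BM}(\mu_m)=\mu_m$. It then iterates in $m$ and uses the contraction bound $0<S_n'<1/2$ to show that the right-hand convergents tend to $\mu_0$. Only afterwards does it pass to $\Re M>0$, via Stieltjes--Vitali and the identity theorem. Lemma~\ref{cf_evaluated} is used there only to know that the right side is holomorphic.

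\textbf{Your route, with the needed fix.} Your Pincherle argument covers all $\Re M>0$ at once. With $x_n=(n+2)!\,y_n$ the recurrence becomes $(n+2)(y_n-y_{n-2})=2My_{n-1}$. Its solutions behave like $n^{M}$ and $(-1)^n n^{-M}$, so the minimal one alternates in sign. Your ansatz $\int_0^1 t^{n+\gamma}(1+t)^{\delta}\cdots\,dt$ therefore needs an explicit factor $(-1)^n$.

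Concretely, the minimal solution is $y_n=(-1)^nI_{n+3}$, where $I_k=\int_0^1 t^k(1-t)^{M-1}(1+t)^{-M-1}\,dt$. Since $\frac{d}{dt}\bigl[(1-t)^M(1+t)^{-M}\bigr]=-2M(1-t)^{M-1}(1+t)^{-M-1}$, one integration by parts gives $(n+2)(I_{n+1}-I_{n+3})=2MI_{n+2}$ for $n\geq0$.

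Pincherle then gives the left side as $-x_0/x_{-1}=2I_3/I_2$. On the other side, Euler's integral turns the value from Lemma~\ref{cf_evaluated} into $2(I_1+I_2)/I_2-2M-2=2(I_1-MI_2)/I_2$. So the identity is just the case $n=0$ of the same recurrence, and no contiguous relations are needed.

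\textbf{Minimality.} Lemma~\ref{gamma_ratio_asymptotic} gives $I_{n+3}\sim 2^{-M-1}\Gamma(M)n^{-M}$. That alone does not prove minimality: you also need a second solution of size $n^{M}$. The Wong--Li theory used in Lemma~\ref{rec_solutions_growth} supplies it. Here the characteristic roots $\pm1$ are simple, so it is the ordinary case, with exponents $\pm M$.

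\textbf{What each approach buys.} Your route gives a closed form for the left side, and it avoids the positivity step, the contraction estimate and the analytic continuation. The paper's route stays entirely within continued-fraction transformations and needs no asymptotic theory of recurrences.

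\textbf{Your remark on Bauer--Muir.} Your assessment of Bauer--Muir is mistaken, and it is in fact exactly the paper's proof. A single transformation does leave the new tail $\mu_{m+1}$ with constant partial denominators $2M+m+1$. But iterating over the family produces the $3n$ growth without any contraction. Write $\mu_{m+1}=(m+4)+T_{m+1}$; then the new partial denominator is $\delta_{m+1}+m+4=(2M+2m+4)+(m+4)=2M+3m+8$, which is $3n+2M+5$ with $n=m+1$. Your parameter choice $a=1$, $b=2$ iterates in the same way, but to a different expansion with leading term $2$. The paper's choice $a=2$, $b=1$ lands exactly on the stated right-hand side.
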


\begin{proof}
We first prove the identity for $M>0$. Lemma \ref{lem:BM} with $\alpha=0$, $\beta=1$, $a=2$, $b=1$, $c=2M$ yields the positive modifying sequence $w_n=n+m+3$ \eqref{modifying_sequence} and
\begin{align}\label{mu_def}
\mu_m&=\K_{n=1}^{\infty}\frac{(n+m+2)(n+1)}{2M+m},\\
\operatorname{BM}(\mu_m)
&=
m+3+
\frac{-2(m+3)(m+M+1)}
{2M+2m+4+\mu_{m+1}},\quad m\in\mathbb{N}_0,
\label{to_iterate}
\end{align}
where, by Lemma \ref{cf_convergence_1/bn}, $\mu_m$ converges for all $m\in\mathbb{N}_0$. Since the continued fraction defining $\mu_m$ is positive, by the argument \cite[p. 83]{Lorentzen}, see also \cite[p.~27]{Perr57}), the Bauer--Muir transformation has the same value as $\mu_m$, i.e. $\operatorname{BM}(\mu_m)=\mu_m$. Therefore, applying the identity \eqref{to_iterate} successively for $m=0,1,\ldots,N-1$, we obtain 
\begin{align}\label{up_to_N}
\mu_0
=
3+
\cfrac{
-6(M+1)
}{
2M+8+
\cfrac{
-8(M+2)
}{
2M+11+
\ddots+
\cfrac{
-2(N+2)(N+M)
}{
2M+2N+2+\mu_N
}
}
}.
\end{align}

By denoting $T_m:=\mu_m-(m+3)$, $m\in\mathbb{N}_0$, from \eqref{to_iterate}, we obtain
\begin{align*}
T_{m-1}=\frac{-2(m+2)(m+M)}{2M+3m+5+T_{m}},\qquad m\in\mathbb{N}.
\end{align*}
Define
\[
S_n(x):=
\frac{-2(n+2)(n+M)}
{3n+2M+5+x}, \qquad x\geqslant-n-3, \qquad n\in\mathbb{N},\qquad x\in\mathbb{R}.
\]
Then
\[
T_{n-1}=S_{n}(T_n), \qquad n\in\mathbb{N}.
\]
Applying $T_{n-1}=S_n(T_n)$, for $n=1,\,2,\,\ldots,\,N$, we obtain
\begin{align*}
\mu_0-3
=S_1\circ S_2\circ\cdots\circ S_N(T_N)
=
\cfrac{
-6(M+1)
}{
2M+8+
\cfrac{
-8(M+2)
}{
2M+11+
\ddots+
\cfrac{
-2(N+2)(N+M)
}{
2M+3N+5+T_N
}
}
}.
\end{align*}
On the other hand, the convergents $C_N$, $N\in\N$ of the continued fraction
\begin{align*}
3+
\K_{n=1}^{\infty}
\frac{-2(n+2)(n+M)}
{3n+2M+5}
\end{align*}
are
\begin{align*}
C_1=3+S_1(0),\qquad C_2=3+S_1(S_2(0)), \qquad \ldots, \qquad C_N=
3+S_1\circ S_2\circ\cdots\circ S_N(0).
\end{align*}
It therefore remains to show that $C_N\to\mu_0$, $N\to\infty$. If $x\geqslant-n-3$, $x\in\mathbb{R}$, $n\in\mathbb{N}$, it follows
\begin{align*}
3n+2M+5+x\geqslant2(n+M+1)>2(n+M)>0.
\end{align*}
Consequently
\begin{align*}
-(n+2)<\frac{-2(n+2)(n+M)}{3n+2M+5+x}=S_n(x)<0,\qquad n\in\mathbb{N}.
\end{align*}
and
\begin{align*}
&0<S_n'(x)
=
\frac{2(n+2)(n+M)}
{(3n+2M+5+x)^2}
\leqslant
\frac{(n+2)(n+M)}
{4(n+M+1)^2}
<
\frac{n+2}{4(n+M+1)}
\\
&
<\frac{n+2}{4(n+1)}\leqslant\frac{3}{8}
<\frac12,\quad n\in\mathbb{N}.
\end{align*}
By the mean value theorem, for any
$x,y\in[-n-3,0]$ there exists $\xi$ between $x$ and $y$ such that
\[
|S_n(x)-S_n(y)|
=
|S_n'(\xi)|\,|x-y|
<
\frac12|x-y|, \qquad n\in\mathbb{N}.
\]
By \eqref{mu_def}, for all $m\in\mathbb{N}_0$ and $M>0$,
\begin{align*}
0<\mu_m=\frac{2m+6}{m+2M+\K_{n=2}^{\infty}\frac{(n+m+2)(n+1)}{2M+m}}<\frac{2m+6}{m+2M}.
\end{align*}
Therefore
\[
-N-3<T_N<-N-3+\frac{2N+6}{N+2M},\qquad N\in\mathbb{N},
\]
where $T_N=\mu_N-(N+3)$. If, in addition, $N>2(1-M)$, then $-N-3<T_N<0$. Thus, applying the mean value theorem iteratively $N$ times, with
\begin{align*}
S_n([-n-3,0])\subset(-n-2,0)\subset[-n-2,0],
\qquad n\in\mathbb N,
\end{align*}

we obtain
\begin{align*}
&|\mu_0-C_N|=|S_1\circ \cdots \circ S_N(T_N)-S_1\circ\cdots\circ S_N(0)|\\
&\leqslant\frac{1}{2}
|S_2\circ \cdots \circ S_N(T_{N})-S_2\circ\cdots\circ S_{N}(0)|\leqslant \ldots\\
&\leqslant
\frac{1}{2^{N-1}}|S_N(T_N)-S_N(0)|\leqslant\frac{|T_N|}{2^N}\to0,\,N\to\infty.
\end{align*}


It remains to extend the identity \eqref{cf_transformed_identity} to
$
H:=\{M\in\mathbb{C}:\Re M>0\}.
$
Consider
\[
\mu_0(M)=
\K_{n=1}^{\infty}
\frac{a_n}{b_n}, \qquad a_n=(n+2)(n+1),\qquad b_n=2M.
\]
Define
\[
r_0=1, \qquad r_nr_{n-1}a_n=1,\qquad n\in\mathbb N.
\]
Then, by Lemma \ref{equiv_cf}, the continued fraction defining
$\mu_0$ is equivalent to
\[
\K_{n=1}^{\infty}\frac{1}{2M r_n}.
\]
Since the sequence $r_n>0$, $n\in\mathbb{N}$, $r_n$ does not depend on $M$, and
\[
r_{2k}
=
\prod_{j=1}^k
\frac{2j}{2j+2}
=
\frac{1}{k+1}
\]
it follows
\[
\sum_{n=1}^{\infty}r_n=\infty.
\]
Consequently, there exists $\varepsilon\in(0,\pi/2)$ such that
\[
2Mr_n\in G_\varepsilon=\left\{w\in\mathbb{C}:|\arg w|\leqslant
\frac{\pi}{2}-\varepsilon\right\}\cup\{0\},\qquad n\in\mathbb N.
\]
Since
\[
\sum_{n=1}^{\infty}|2Mr_n|
=
2|M|\sum_{n=1}^{\infty}r_n
=\infty,
\]
Lemma \ref{cf_convergence_1/bn} implies that $\mu_0(M)$ converges
for every $M\in H$.

We next show that $\mu_0$ is holomorphic in $H$. Let $K\subset H$
be compact. Since
\[
\inf_{M\in K}\Re M>0,
\]
there exists an open neighborhood $U$ of $K$ with
$\overline U\subset H$ such that
\[
\Re(2Mr_n)>0,\qquad M\in U,\qquad n\in\mathbb N.
\]
Then, each finite convergent of $\k(1/2Mr_n)$ belongs to the open right half-plane. Indeed,
the maps
\[
w\mapsto \frac{1}{2Mr_n+w},\qquad n\in\mathbb{N}.
\]
map the closed right half-plane into the open right half-plane. Since these finite convergents converge for every $M\in U$, the
Stieltjes--Vitali theorem \cite[Thm.~3.10, p.~115]{Lorentzen}
implies that the convergence is locally uniform in $U$ and that its
limit is holomorphic. Hence $F(M):=\mu_0-3$ is holomorphic in $H$.

On the other hand, by Lemma \ref{cf_evaluated}, the continued
fraction
\[
G(M):=
\K_{n=1}^{\infty}
\frac{-2(n+2)(n+M)}
     {3n+2M+5}
\]
converges for every $M\in H$ and is represented there by the
hypergeometric expression given in that lemma. Hence $G(M)$ is
holomorphic in $H$. For $M>0$, the first part of the proof established $F(M)=G(M)$. Since $F$ and $G$ are holomorphic in the connected domain $H$ and
$(0,\infty)\subset H$ has an accumulation point in $H$, the identity
theorem \cite[Thm.~6.5]{Berg} yields
\[
F(M)=G(M),\qquad M\in H.
\]
\end{proof}

\begin{lem}\label{2F1_relations}
Let $\Re M>0$. Then
\begin{align}
{}_2F_1(2,M;M+2;-1)
&=
M(2M-1)\,{}_2F_1(1,M+1;M+2;-1)-M^2+1,
\label{hyp_2}\\
{}_2F_1(3,M+1;M+3;-1)
&=
M^2(M+2)\,{}_2F_1(1,M+1;M+2;-1)
\nonumber\\
&\quad
+\frac{(M+1)(M+2)(1-2M)}{4}.
\label{hyp_3}
\end{align}
\end{lem}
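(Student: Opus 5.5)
The plan is to reduce both sides of \eqref{hyp_2} and \eqref{hyp_3} to the single quantity $\Phi(-1,1,M+1)$. By Lemma~\ref{lerch_to_hyp}, ${}_2F_1(1,M+1;M+2;-1)=(M+1)\Phi(-1,1,M+1)$. I would first fix what the left-hand sides mean. For \eqref{hyp_2} we have $c-a-b=0$, so the series at $z=-1$ converges (conditionally) by the criterion recalled in the preliminaries. For \eqref{hyp_3} we have $c-a-b=-1$, so the series at $z=-1$ diverges. Its value is therefore the analytic continuation, or equivalently the Abel limit $z\to-1^{+}$, and this is also the sense in which it enters Lemma~\ref{2F1_ratio} and Lemma~\ref{cf_evaluated}. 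I would therefore work with a real $z\in(-1,0]$, split each series into elementary pieces, and let $z\to-1^{+}$ piece by piece. Abel's theorem handles the pieces that converge at $-1$, and the remaining pieces have explicit closed forms that are continuous at $z=-1$.

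For \eqref{hyp_2}, I would use $(2)_n/n!=n+1$ and $(M)_n/(M+2)_n=M(M+1)/((M+n)(M+n+1))$. The partial fraction decomposition in $n$,
\[
\frac{n+1}{(M+n)(M+n+1)}=\frac{1-M}{M+n}+\frac{M}{M+n+1},
\]
splits the general term into two pieces. Each piece gives a convergent alternating series at $z=-1$:
\[
\sum_{n\ge0}\frac{(-1)^n}{M+n+1}=\Phi(-1,1,M+1),\qquad
\sum_{n\ge0}\frac{(-1)^n}{M+n}=\frac1M-\Phi(-1,1,M+1).
\]
Substituting, multiplying by $M(M+1)$, and replacing $(M+1)\Phi(-1,1,M+1)$ by ${}_2F_1(1,M+1;M+2;-1)$ should give $M(2M-1)\,{}_2F_1(\cdots)+(1-M)(M+1)$, which is exactly the claimed right-hand side.

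For \eqref{hyp_3}, I would use $(3)_n/n!=(n+1)(n+2)/2$ and $(M+1)_n/(M+3)_n=(M+1)(M+2)/((M+n+1)(M+n+2))$. The rational function
\[
\frac{(n+1)(n+2)}{2(M+n+1)(M+n+2)}
\]
has degree $0$, so I would write it as $\tfrac12+\tfrac{A}{M+n+1}+\tfrac{B}{M+n+2}$ with explicit $A,B$ depending on $M$. The term $\tfrac12$ produces $\tfrac12\sum_n z^n=\tfrac{1}{2(1-z)}$, which tends to $\tfrac14$ as $z\to-1^{+}$. This is the source of the $1/4$ in \eqref{hyp_3}. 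The other two pieces converge at $z=-1$ and, after Abel's theorem, are expressed through $\Phi(-1,1,M+1)$ and $\Phi(-1,1,M+2)=\tfrac{1}{M+1}-\Phi(-1,1,M+1)$. Collecting terms and multiplying by $(M+1)(M+2)$ should give \eqref{hyp_3}.

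The main obstacle is the justification for \eqref{hyp_3}: I must make sure that the value of ${}_2F_1(3,M+1;M+3;-1)$ used in Lemma~\ref{cf_evaluated} (through Lemma~\ref{2F1_ratio}) coincides with the Abel limit along $(-1,0]$. I would argue that for $\Re M>0$ the function is holomorphic on $\mathbb{C}\setminus[1,\infty)$. It is then continuous at $-1$, so its value there equals the radial limit, and the decomposition above, which holds for $|z|<1$, passes to that limit. The remaining work is the coefficient bookkeeping, which I would double-check numerically at $M=1$.
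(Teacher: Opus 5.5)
Your proposal is correct, and it takes a genuinely different route from the paper. I checked the bookkeeping. Write $\Phi=\Phi(-1,1,M+1)$.
\begin{itemize}
\item For \eqref{hyp_2}, your splitting gives $M(M+1)\bigl[(1-M)(1/M-\Phi)+M\Phi\bigr]=1-M^2+M(2M-1)(M+1)\Phi$, which is the claimed right-hand side.
\item For \eqref{hyp_3}, the coefficients are $A=M(M-1)/2$ and $B=-M(M+1)/2$. They give $(M+1)(M+2)\bigl[\tfrac14-\tfrac M2+M^2\Phi\bigr]$, which is exactly the claim. At $M=1$ both sides of \eqref{hyp_3} equal $9/2-6\log 2$.
\end{itemize}

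The paper never expands the series. It quotes the contiguous relations (1.8), (1.17), (1.24) and (1.31) of Rakha--Rathie--Chopra at $z=-1$. It then eliminates ${}_2F_1(1,M;M+2;-1)$, ${}_2F_1(2,M+1;M+3;-1)$ and ${}_2F_1(2,M+1;M+2;-1)$ by linear algebra.

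That argument is shorter and adapts mechanically to other parameter shifts. However, it rests entirely on the cited relations. It also applies them at $z=-1$ to functions whose series diverge there, such as ${}_2F_1(3,M+1;M+3;-1)$ and ${}_2F_1(2,M+1;M+2;-1)$. This is valid only because contiguous relations are identities of analytic functions and survive continuation, and the paper leaves that point implicit.

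Your argument is self-contained: partial fractions in $n$, Abel's theorem, and continuity of the continued function at $-1$. It makes the interpretation explicit and consistent with the sense in which Lemma~\ref{2F1_ratio} is applied. It also shows where the constant $(1-2M)/4$ comes from, namely the piece $\frac{1}{2(1-z)}$ evaluated at $z=-1$.
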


\begin{proof}
To prove \eqref{hyp_2}, we apply (1.8) and (1.17) from
\cite{Rakha} with $a=1$,  $b=M$, $c=M+2$, $z=-1$, which respectively give 
\begin{align}
{}_2F_1(2,M;M+2;-1)
&=
\frac{1-2M}{2}\,{}_2F_1(1,M;M+2;-1)
+\frac{M+1}{2},
\label{hyp_aux1}\\
(1-M){}_2F_1(1,M;M+2;-1)
&=
{}_2F_1(2,M;M+2;-1)
\nonumber\\
&\hspace{2.2cm}
-M\,{}_2F_1(1,M+1;M+2;-1).
\label{hyp_aux2}
\end{align}
Substituting \eqref{hyp_aux1} into \eqref{hyp_aux2} and collecting
the terms containing ${}_2F_1(1,M;M+2;-1)$, we obtain
\[
{}_2F_1(1,M;M+2;-1)
=
M+1-2M\,{}_2F_1(1,M+1;M+2;-1).
\]
Substitution of this identity into \eqref{hyp_aux1} yields
\eqref{hyp_aux2}.

To prove \eqref{hyp_3}, we use relations (1.24) and (1.31) from
\cite{Rakha}, with $a=2$, $b=M+1$, $c=M+3$, and $z=-1$, which
respectively give
\begin{align}
{}_2F_1(3,M+1;M+3;-1)
&=-\frac{M}{2}{}_2F_1(2,M+1;M+3;-1)\notag\\
&\quad+\frac{M+2}{2}{}_2F_1(2,M+1;M+2;-1),
\label{hyp_aux3}\\
{}_2F_1(2,M+1;M+3;-1)
&=2(M+2){}_2F_1(2,M+1;M+2;-1)\notag\\
&\quad-(M+2){}_2F_1(1,M+1;M+2;-1).
\label{hyp_aux4}
\end{align}
Relation (1.8) from \cite{Rakha}, with $a=1$, $b=M+1$, $c=M+2$,
and $z=-1$, yields
\begin{equation}\label{hyp_aux5}
{}_2F_1(2,M+1;M+2;-1)
=
-M\,{}_2F_1(1,M+1;M+2;-1)+\frac{M+1}{2}.
\end{equation}
Substituting \eqref{hyp_aux4} into \eqref{hyp_aux3}, we obtain
\begin{align*}
{}_2F_1(3,M+1;M+3;-1)
&=\frac{(M+2)(1-2M)}{2}
  {}_2F_1(2,M+1;M+2;-1)\\
&\quad+\frac{M(M+2)}{2}
  {}_2F_1(1,M+1;M+2;-1).
\end{align*}
Finally, substituting \eqref{hyp_aux5} into the latter identity gives
\[
{}_2F_1(3,M+1;M+3;-1)
=
M^2(M+2){}_2F_1(1,M+1;M+2;-1)
+\frac{(M+1)(M+2)(1-2M)}{4},
\]
which is \eqref{hyp_3}.
\end{proof}

\begin{lem}\label{lem:cf_convergence_from_tail}
Let $\k(a_n/b_n)$ be a continued fraction, and let $A_n$ and $B_n$
denote its $n$th canonical numerator and denominator, respectively.
If, for some $N\in\mathbb N$, the $N$th tail of $\k(a_n/b_n)$ converges to
$f^{(N)}$ and $B_{N-1}f^{(N)}+B_N\neq0$,
then $\k(a_n/b_n)$ converges.
\end{lem}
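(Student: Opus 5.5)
The argument rests on the standard formula that expresses the $n$th convergent through the $(N+1)$th convergents of the $N$th tail. Denote the tail by
\[
f^{(N)}_k:=\K_{n=N+1}^{N+k}\frac{a_n}{b_n},\qquad k\in\mathbb N_0,\qquad f^{(N)}_0=0,
\]
so that $f^{(N)}_k\to f^{(N)}$ as $k\to\infty$. The recurrences \eqref{fundamental_recurrence} give, by induction on $k$, the linear fractional representation
\[
f_{N+k}=\frac{A_N+A_{N-1}f^{(N)}_k}{B_N+B_{N-1}f^{(N)}_k},\qquad k\in\mathbb N_0.
\]
Equivalently, $f_{N+k}=S_N(f^{(N)}_k)$ with
\[
S_N(w)=\frac{A_N+A_{N-1}w}{B_N+B_{N-1}w}.
\]
This is the Möbius map obtained by composing $s_n(w)=a_n/(b_n+w)$ for $n=1,\dots,N$. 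We regard it as a map of the Riemann sphere, and with this convention $f_{N+k}$ is well defined in $\widehat{\mathbb C}$ whenever the tail convergents are.

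Next we use the determinant formula
\[
A_NB_{N-1}-A_{N-1}B_N=(-1)^{N-1}a_1\cdots a_N,
\]
which also follows from \eqref{fundamental_recurrence}. When all $a_n\neq0$, as in the standard definition of a continued fraction, this quantity is nonzero. Hence $S_N$ is a nondegenerate Möbius transformation and in particular a homeomorphism of $\widehat{\mathbb C}$.

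Continuity of $S_N$ at the point $f^{(N)}$ then gives
\[
f_{N+k}=S_N\bigl(f^{(N)}_k\bigr)\to S_N\bigl(f^{(N)}\bigr),\qquad k\to\infty.
\]
The hypothesis $B_{N-1}f^{(N)}+B_N\neq0$ ensures that this limit is finite. Explicitly,
\[
\lim_{n\to\infty}f_n=\frac{A_N+A_{N-1}f^{(N)}}{B_N+B_{N-1}f^{(N)}}\in\mathbb C,
\]
so $\k(a_n/b_n)$ converges, and its value is this number. It is worth remarking that only finitely many $f_{N+k}$ could be infinite, because the denominators $B_N+B_{N-1}f^{(N)}_k$ tend to a nonzero limit; this is consistent with the usual convention for convergence in $\widehat{\mathbb C}$.

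The only delicate point is the edge case of a vanishing partial numerator $a_n$ with $n\leq N$, where $S_N$ degenerates. I would handle it by the standing assumption $a_n\neq0$, which holds in every application in the paper: the partial numerators there are either $1$ or products of linear factors that do not vanish for $\Re M>0$. With that assumption in place, the proof is a direct application of the convergent/tail formula.
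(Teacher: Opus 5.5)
Your proof is correct and is essentially the paper's argument. Both write $f_{N+k}=(A_N+A_{N-1}f_k^{(N)})/(B_N+B_{N-1}f_k^{(N)})$ and pass to the limit using $B_{N-1}f^{(N)}+B_N\neq0$; the determinant formula and the $a_n\neq0$ caveat are harmless but unnecessary, since the nonvanishing denominator at $f^{(N)}$ already makes this rational map continuous there.
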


\begin{proof}
Fix $N$ such that the $N$th tail converges, and denote its $k$th
convergent by $f_k^{(N)}$. The relation between a continued
fraction and its tails \cite[Eq.~(1.3.2)]{Handbook} gives
\[
f_{N+k}
=
\frac{A_{N-1}f_k^{(N)}+A_N}
     {B_{N-1}f_k^{(N)}+B_N},
\qquad k\in\N.
\]
Since
\[
f_k^{(N)}\to f^{(N)}, \qquad k\to\infty
\]
and
\[
B_{N-1}f^{(N)}+B_N
\neq0,
\]
we obtain
\[
f_{N+k}
\to
\frac{A_{N-1}f^{(N)}+A_N}
     {B_{N-1}f^{(N)}+B_N},\qquad k\to\infty.
\]
Thus the sequence of convergents of $\k(a_n/b_n)$ converges, and hence
the continued fraction $\k(a_n/b_n)$ converges.
\end{proof}

\subsection{Proof of Theorem~\ref{conj_1}}

By the assumptions of Theorem~\ref{conj_1},
\begin{align*}\label{b_initial}
b_0=0,\qquad
b_1=1,\qquad
b_2=2M-1,\qquad
b_n=\frac{2M}{n-1},\qquad n\in \{3,\,4,\,\ldots\}.
\end{align*}
Since $\Re M>0$, there exists $\varepsilon\in(0,\pi/2)$ such that
\begin{align*}
|\arg M|\leqslant\frac{\pi}{2}-\varepsilon.
\end{align*}
Hence $b_n=2M/(n-1)\in G_\varepsilon$, $n\in \{3,\,4,\,\ldots\}$,
where
\[
G_\varepsilon
=
\left\{
w\in\mathbb{C}:
|\arg w|\leqslant\frac{\pi}{2}-\varepsilon
\right\}\cup\{0\}.
\]
Moreover,
\[
\sum_{n=3}^{\infty}|b_n|
=
2|M|\sum_{n=3}^{\infty}\frac{1}{n-1}
=\infty.
\]
Therefore, by Lemma~\ref{cf_convergence_1/bn}, the continued fraction
\[
T=\K_{n=3}^{\infty}\frac{1}{b_n}
\]
converges for every $M$ satisfying $\Re M>0$.

Notice that $T$ is the second tail of the continued fraction $\k(1/b_n)$. Denote by $B_n$ the canonical denominator of the continued fraction $\k(1/b_n)$. By \eqref{fundamental_recurrence}, $B_2/B_1=2M$. We next show that $T\neq -2M$.
Define the linear fractional transformation for the $n$-th step as
\begin{align*}
    s_n(w)=\frac{1}{b_n+w},\quad n\in \{3,\,4,\,\ldots\},
\end{align*}
where $\Re w\geq0$. Since $\Re b_n>0$ we have
\begin{align*}
\Re s_n(w)=\Re\frac{1}{b_n+w} = \frac{\Re (b_n+w)}{\abs{b_n+w}^2} > 0.
\end{align*}

The $n$th convergent $T_n$ of $T$ can be written as
\begin{align*}
T_n = s_3\circ s_4 \circ\ldots \circ s_{n+2}(0), \qquad n\in\mathbb{N}.
\end{align*}
Since 
\begin{align*}
T_1=s_3(0)=\frac{1}{b_3},\quad T_2=s_3(s_4(0))=\frac{1}{b_3+1/b_4}
\end{align*}
and $\Re T_1,\,\Re T_2>0$, by induction we have that $\Re T_n >0$ for all $n\in \N$. Letting $n\to\infty$, we conclude that $\Re T \geq 0$, which implies $T\neq -2M$. Therefore, by Lemma~\ref{lem:cf_convergence_from_tail}, the continued fraction $\k (1/b_n)$ converges for every $M$ satisfying $\Re M>0$. Denote its limit by $L$.

Applying the equivalence transformation in
Lemma~\ref{equiv_cf} with
\[
r_0=1,\qquad r_1=2M,\qquad r_2=1,\qquad
r_n=n-1,\quad n\in \{3,\,4,\,\ldots\},
\]
we obtain
\[
L=\K_{n=1}^{\infty}\frac{c_n}{d_n},
\]
where
\begin{align*}
c_1&=2M,\qquad
c_2=2M,\qquad
c_3=2,\qquad
c_n=(n-1)(n-2),\quad n\in \{4,\,5,\,\ldots\},\\
d_1&=2M,\qquad
d_2=2M-1,\qquad
d_n=2M,\quad n\in \{3,\,4,\,\ldots\}.
\end{align*}
Let
\[
K=
\K_{n=4}^{\infty}\frac{c_n}{d_n}
=
\K_{n=4}^{\infty}\frac{(n-1)(n-2)}{2M}
=
\K_{n=1}^{\infty}\frac{(n+2)(n+1)}{2M}.
\]
To prove the convergence of $K$, we apply the equivalence transformation
from Lemma~\ref{equiv_cf} with $r_0=1$ and
\[
r_n r_{n-1}(n+2)(n+1)=1,\qquad n\in\N.
\]
This gives
\[
r_{2n}=\frac{1}{n+1},
\qquad
r_{2n-1}=\frac{1}{4n+2},
\qquad n\in\N.
\]
Therefore $K$ is equivalent to $\k(1/\widetilde b_n)$,
where
\begin{align*}
\widetilde b_{2n-1}=\frac{M}{2n+1},
\qquad
\widetilde b_{2n}=\frac{2M}{n+1},\qquad n\in\mathbb{N}.
\end{align*}
Since $\Re M>0$, there exists $\varepsilon\in(0,\pi/2)$ such that
$\widetilde b_n\in G_\varepsilon$ for all $n\in\N$. Moreover,
\[
\sum_{n=1}^{\infty}|\widetilde b_n|
\geqslant
2|M|\sum_{n=1}^{\infty}\frac{1}{n+1}
=\infty.
\]
Therefore, by Lemma~\ref{cf_convergence_1/bn}, the transformed
continued fraction converges, and hence $K$ converges. Therefore
\begin{align}\label{L_express}
L
&=
\cfrac{2M}
{2M+
 \cfrac{2M}
 {2M-1+
  \cfrac{2}
  {2M+K}}}=
\frac{4M^2+2MK-2M-K+2}
     {4M^2+2MK+2}.
\end{align}
By Lemma~\ref{cf_transformed}
\begin{align}\label{transformation}
K
=
3+\K_{n=1}^{\infty}
\frac{-2(n+2)(n+M)}{3n+2M+5}.
\end{align}
Lemma~\ref{cf_evaluated} further gives
\begin{equation}\label{final_cf_evaluated}
K
=
(M+2)
\frac{{}_2F_1(2,M;M+2;-1)}
     {{}_2F_1(3,M+1;M+3;-1)}
-2(M+1).
\end{equation}
Substituting \eqref{final_cf_evaluated} into \eqref{L_express},
applying Lemma~\ref{2F1_relations}, and simplifying, we obtain
\[
L=
\frac{2M}{M+1}
\,{}_2F_1(1,M+1;M+2;-1).
\]
The statement of Theorem \ref{conj_1} now follows by applying Lemma~\ref{lerch_to_hyp}.
\qed

\section{Proof of the second theorem}

As in Section \ref{sec1}, we first formulate and prove several auxiliary statements that are needed to prove Theorem \ref{conj_2}.

\subsection{Auxiliary results for the proof of Theorem \ref{conj_2}}

\begin{lem}\label{recurrence_sol_lem}
Let $\Re M>0$, and for $k\in\mathbb{N}_0$ define
\begin{align}\label{def_y_k}
y_k=
\frac{k!}{(M)_{k+1}}
\Fhyp{M,\,M+1,\,M+1}{2M+2,\,M+k+1}{1},
\end{align}
where $(M)_{k+1}$ denotes the Pochhammer symbol. Then
\begin{equation}\label{recurrence_sol_eq}
k^2(y_{k+1}-2y_k+y_{k-1})=M(M+1)y_k,
\qquad k\in \N.
\end{equation}
\end{lem}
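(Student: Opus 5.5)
The plan is to turn $y_k$ into a moment integral against $(1-t)^k$ and then obtain the recurrence from the hypergeometric differential equation \eqref{2F1_diff_eq} by integrating by parts.

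\emph{Step 1: integral representation.} Put
\[
g(t)={}_2F_1(M,M+1;2M+2;t).
\]
For $k\in\mathbb N$, apply \eqref{pFq_integral_form} with $c=M+1$ and $d=M+k+1$; the conditions $\Re c>0$ and $\Re(d-c)=k>0$ hold. Since $(M)_{k+1}=\Gamma(M+k+1)/\Gamma(M)$, the gamma factors collapse to $k/M$, giving
\[
y_k=\frac{k}{M}\int_0^1 t^{M}(1-t)^{k-1}g(t)\,dt .
\]
Write $k(1-t)^{k-1}=-\frac{d}{dt}(1-t)^k$ and integrate by parts. The boundary terms vanish: at $t=0$ because $\Re M>0$, and at $t=1$ because $g(1)$ is finite (here $c-a-b=1>0$). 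This yields
\[
y_k=\frac1M\int_0^1(1-t)^k h(t)\,dt,\qquad h(t):=\frac{d}{dt}\bigl(t^Mg(t)\bigr).
\]
The same formula holds for $k=0$, since the ${}_3F_2$ in \eqref{def_y_k} then reduces to $g(1)$ and $\int_0^1h=g(1)$.

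\emph{Step 2: reduce the claim to an integral identity.} From Step 1,
\[
y_{k+1}-2y_k+y_{k-1}=\frac1M\int_0^1(1-t)^{k-1}t^2h(t)\,dt .
\]
So \eqref{recurrence_sol_eq} is equivalent to
\[
k^2\int_0^1(1-t)^{k-1}t^2h(t)\,dt=M(M+1)\int_0^1(1-t)^kh(t)\,dt .
\]

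\emph{Step 3: use the differential equation.} Set $u=1-t$ and $\vartheta=u\,\frac{d}{du}$. Then
\[
k^2(1-t)^{k-1}=\frac1u\,\vartheta^2 u^k .
\]
Integrating by parts twice moves $\vartheta^2$ onto $t^2h/u$ as its formal adjoint. The identity then reduces to checking that $F(t)=t^Mg(t)$ satisfies a second-order linear ODE with
\[
\vartheta^{*2}\!\left(\frac{t^2h}{u}\right)=M(M+1)\,h .
\]
This ODE should follow from \eqref{2F1_diff_eq} with $a=M$, $b=M+1$, $c=2M+2$, after conjugating by $t^M$. It is a direct, if tedious, symbolic computation, and it can be cross-checked numerically at a few values of $M$ and $k$. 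If it fails in this form, the fallback is to absorb a factor into $h$ and test the closely related adjoint operator $\frac{d}{dt}t^2(1-t)\frac{d}{dt}$.

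\emph{Main obstacle: boundary terms at $t=1$.} Because $c-a-b=1$, the function $g$ is finite at $t=1$, but $g'$ has a logarithmic singularity there. Each boundary term carries a factor $(1-t)^{k}$ or $(1-t)^{k-1}t^2$ multiplied by $h$ or $h'$. I would estimate these with the standard behaviour of ${}_2F_1$ near $t=1$, namely $g'(t)=O(\log\frac1{1-t})$, and check separately that they vanish in the worst case $k=1$. If some boundary term does not vanish at $k=1$, I would treat that case directly, verifying $y_2-2y_1+y_0=M(M+1)y_1$ from the series \eqref{def_y_k} using Thomae's relation \eqref{thomae}. An alternative route that avoids boundary analysis entirely is to derive \eqref{recurrence_sol_eq} termwise from the series, as a three-term contiguous relation in the lower parameter $M+k+1$; that would be my fallback if the integral approach becomes messy.
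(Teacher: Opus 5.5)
Your plan is correct and is essentially the paper's proof. Since $(t^Mg)'=Mt^{M-1}{}_2F_1(M+1,M+1;2M+2;t)$, your representation $y_k=\frac1M\int_0^1(1-t)^kh(t)\,dt$ is exactly the paper's $y_k=\int_0^1 t^{M-1}(1-t)^k f(t)\,dt$; the paper just peels off the upper parameter $M$ instead of $M+1$ in \eqref{pFq_integral_form}. Likewise, your $\frac1u\vartheta^2u^k$ is the paper's decomposition $k^2(1-t)^{k-1}=\frac{d^2}{dt^2}(1-t)^{k+1}+\frac{d}{dt}(1-t)^k$.

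The steps you hedge on do go through. The function $F=t^Mg$ satisfies $(1-t)\bigl(t^2F'\bigr)'=M(M+1)F$, which is just \eqref{2F1_diff_eq} for $g$ rewritten (here $c=a+b+1$). The only boundary terms are $k(1-t)^kt^2h$ and $(1-t)^{k+1}\frac{d}{dt}(t^2h)$. They vanish at $t=0$ because $\Re M>0$, and at $t=1$ for every $k\geq1$ because $h=O(\log\frac{1}{1-t})$ and $h'=O(\frac{1}{1-t})$. So no separate treatment of $k=1$ is needed.
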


\begin{proof}
Applying \eqref{pFq_integral_form} with $p=2,\,q=1$ and
\[
a_1=M,\qquad a_2=c=M+1,\qquad
b_1=2M+2,\qquad d=M+k+1,\qquad z=1,
\]
and using
\begin{align*}
\Fhyp{M,\,M+1,\,M+1}{2M+2,\,M+k+1}{1}=\Fhyp{M+1,\,M+1,\,M}{2M+2,\,M+k+1}{1},
\end{align*}
we obtain
\begin{align*}
y_k
=\int_0^1 t^{M-1}(1-t)^k
{}_2F_1(M+1,M+1;2M+2;t)\,dt.
\end{align*}
Denote
\[
f(t)={}_2F_1(M+1,M+1;2M+2;t).
\]
Then
\begin{align*}
y_{k+1}-2y_k+y_{k-1}
&=
\int_0^1 t^{M-1}(1-t)^{k-1}
\bigl((1-t)^2-2(1-t)+1\bigr)f(t)\,dt\\
&=
\int_0^1 t^{M+1}(1-t)^{k-1}f(t)\,dt.
\end{align*}
Let us denote $u(t)=t^{M+1}f(t)$ and observe
\begin{align*}
k^2(1-t)^{k-1}
=
\frac{d^2}{dt^2}(1-t)^{k+1}
+
\frac{d}{dt}(1-t)^k.
\end{align*}
Then, upon integration by parts,
\begin{align*}
&k^2(y_{k+1}-2y_k+y_{k-1})=\int_{0}^{1}u(t)\,d\,\left(\left((1-t)^{k+1}\right)'+(1-t)^k\right)\\
&=-k (1-t)^k u(t)\Big|^1_0-
\int_{0}^{1}\left(\left((1-t)^{k+1}\right)'+(1-t)^k\right)u'(t)\,dt\\
&=-\left(k (1-t)^k u(t)+(1-t)^{k+1}u'(t)\right)\Big|^1_0
+\int_{0}^{1}(1-t)^k\left((1-t)u''(t)-u'(t)\right)\,dt=:B+I.
\end{align*}
Since $u'(t)=(M+1)t^Mf(t)+t^{M+1}f'(t)$, we have
\begin{align*}
B
=
-\left(
\left(
k(1-t)^k t^{M+1}
+(M+1)(1-t)^{k+1}t^M
\right)f(t)
+(1-t)^{k+1}t^{M+1}f'(t)
\right)\Big|_0^1.
\end{align*}
By definition \eqref{def_F21},
\begin{align*}
f(0)=1,\,f'(0)=\frac{M+1}{2},\,f(t)=O\left(\log \frac{1}{1-t}\right),\,
f'(t)=O\left(\frac{1}{1-t}\right),\,t\to1^-.
\end{align*}
Therefore, $B=0$ because $\Re M>0$ and $k\in \N$. By computing $u''(t)$ and inserting it into $I$, we obtain
\begin{align*}
&k^2(y_{k+1}-2y_k+y_{k-1})-M(M+1)y_k\\
&\quad=
\int_0^1 t^M(1-t)^k
\left(
t(1-t)f''(t)
+\bigl(2M+2-(2M+3)t\bigr)f'(t)
-(M+1)^2f(t)
\right)\,dt.
\end{align*}
The latter integral $=0$ by the hypergeometric
differential equation \eqref{2F1_diff_eq}, with
\[
a=b=M+1,\qquad c=2M+2,
\]
which proves \eqref{recurrence_sol_eq}.
\end{proof}

\begin{lem}\label{solution_asymptotics}
Let $\Re M>0$. Then
\begin{align}\label{F_32_k}
y_k=\frac{k!}{(M)_{k+1}} \Fhyp{M,\,M+1,\,M+1}{2M+2,\,M+k+1}{1} \sim \frac{\Gamma(M)}{k^M},
\qquad
k\to\infty.
\end{align}
\end{lem}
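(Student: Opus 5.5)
The plan is to work with the integral representation already obtained in the proof of Lemma~\ref{recurrence_sol_lem}. Applying \eqref{pFq_integral_form} there gave
\[
y_k=\int_0^1 t^{M-1}(1-t)^k f(t)\,dt,\qquad f(t)={}_2F_1(M+1,M+1;2M+2;t).
\]
As $k\to\infty$ the weight $(1-t)^k$ concentrates the mass at $t=0$, where $f(0)=1$. So the leading term should come from replacing $f$ by $1$. I therefore write
\[
y_k=\int_0^1 t^{M-1}(1-t)^k\,dt+\int_0^1 t^{M-1}(1-t)^k\bigl(f(t)-1\bigr)\,dt=:J_k+R_k .
\]

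The main term is a Beta integral:
\[
J_k=\frac{\Gamma(M)\Gamma(k+1)}{\Gamma(M+k+1)}.
\]
By Lemma~\ref{gamma_ratio_asymptotic} with $\alpha=1$, $\beta=M+1$, this gives $J_k\sim\Gamma(M)k^{-M}$. This is consistent with the prefactor $k!/(M)_{k+1}$ in \eqref{F_32_k}. It therefore suffices to show $R_k=o(|k^{-M}|)=o(k^{-\Re M})$.

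For $R_k$ I fix $\delta\in(0,1)$ and split the integral at $\delta$.

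On $[0,\delta]$, the series \eqref{def_F21} converges uniformly, so $|f(t)-1|\le Ct$ there. The contribution is then bounded by
\[
C\int_0^1 t^{\Re M}(1-t)^k\,dt=C\,\frac{\Gamma(\Re M+1)\,k!}{\Gamma(\Re M+k+2)}=O\bigl(k^{-\Re M-1}\bigr),
\]
again by Lemma~\ref{gamma_ratio_asymptotic}.

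On $[\delta,1]$, I use $(1-t)^k\le(1-\delta)^{k}$ together with $|t^{M-1}|\le\max(1,\delta^{\Re M-1})$. The function $f(t)-1$ has only the logarithmic singularity $O(\log\frac{1}{1-t})$ as $t\to1^-$, already noted in the proof of Lemma~\ref{recurrence_sol_lem}, so it is integrable on $[\delta,1]$. This piece is therefore $O((1-\delta)^k)$, which decays exponentially.

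Combining the two pieces gives $R_k=O(k^{-\Re M-1})$. Since $|\Gamma(M)k^{-M}|=|\Gamma(M)|k^{-\Re M}$ and $\Gamma(M)\ne0$, this yields $y_k=J_k+R_k\sim\Gamma(M)k^{-M}$.

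The only delicate point is justifying the bounds on $f$. One needs the local Lipschitz estimate at $0$ and the integrability of $f$ near $1$. The latter follows because $c-a-b=0$ for this ${}_2F_1$, which is exactly the logarithmic case. Everything else is the Beta integral combined with the gamma-ratio asymptotic.
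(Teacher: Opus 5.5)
Your proof is correct, but it takes a different route from the paper. The paper does not use the Euler integral in this lemma. It works directly with the series $F_k={}_3F_2(M,M+1,M+1;2M+2,M+k+1;1)$. It uses the inequality $|(M+k+1)_j|\geq|(M+1)_j|$, valid for $\Re M>0$, to dominate the $j$th term uniformly in $k$ by $\bigl|(M)_j(M+1)_j/((2M+2)_j\,j!)\bigr|=O(j^{-2})$. It then notes that each fixed term tends to $0$ because $|(M+k+1)_j|\sim k^j$, and concludes $F_k\to1$ by dominated convergence. The prefactor $k!/(M)_{k+1}$ is handled by Lemma~\ref{gamma_ratio_asymptotic}, exactly as your $J_k$.

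Your decomposition is the same statement in integral form. Since $J_k=k!/(M)_{k+1}$, you have $R_k=J_k(F_k-1)$, so your bound $R_k=O(k^{-\Re M-1})$ is equivalent to $F_k=1+O(1/k)$. Your localisation at $t=0$ therefore gives an explicit rate, which the paper's dominated-convergence argument does not give as written.

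The price is extra analytic input. You rely on the Euler integral at $z=1$, inherited from Lemma~\ref{recurrence_sol_lem}. You also need the endpoint behaviour of $f$:
\begin{itemize}
\item boundedness of $(f(t)-1)/t$ on $[0,\delta]$, most cleanly justified by writing $f(t)-1=t\,g(t)$ with $g$ a power series of radius $1$, rather than by uniform convergence alone;
\item integrability of the logarithmic singularity at $t=1$, which comes from $c-a-b=0$.
\end{itemize}
The paper's series argument needs neither and uses only the gamma-ratio asymptotic.
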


\begin{proof}
By the definition of the generalized hypergeometric function \eqref{ghf}, we study $_3F_2$ from \eqref{F_32_k}. Define
\begin{align*}
_3F_2(M,k)=F_k
:=
1+\sum_{j=1}^{\infty}
\frac{(M)_j(M+1)_j^2}
     {(2M+2)_j(M+k+1)_j\,j!}.
\end{align*}
Since $\Re M>0$, for every $r\geqslant0$ and $k\in\N_0$,
\[
|M+k+1+r|^2-|M+1+r|^2
=
k\bigl(2(\Re M+1+r)+k\bigr)
\geqslant0.
\]
Hence
\[
|M+k+1+r|\geqslant |M+1+r|,
\qquad r=0,\,1,\,\ldots,\,j-1.
\]
Multiplying these inequalities and using the definition of the
Pochhammer symbol, we obtain
\[
|(M+k+1)_j|
=
\prod_{r=0}^{j-1}|M+k+1+r|
\geqslant
\prod_{r=0}^{j-1}|M+1+r|
=
|(M+1)_j|.
\]
Moreover,
\begin{align*}
&
\left|
\frac{(M)_j(M+1)_j^2}
{(2M+2)_j(M+k+1)_j\,j!}
\right|
\leqslant
\left|
\frac{(M)_j(M+1)_j}
{(2M+2)_j\,j!}
\right|\\
&=\left|\frac{\Gamma(2M+2)}{\Gamma(M)\Gamma(M+1)}\frac{\Gamma(M+j)\Gamma(M+j+1)}{\Gamma(2M+2+j)\Gamma(j+1)}\right|=O\left(\frac{1}{j^2}\right),\,j\to\infty,
\end{align*}
where the last estimate was obtained by Lemma \ref{gamma_ratio_asymptotic}. Thus the series defined by $F_k$ is absolutely convergent, and its terms are dominated by a summable sequence independent of $k$. For every fixed $j\in\N$, by Lemma \ref{gamma_ratio_asymptotic},
\begin{align*}
|(M+k+1)_j|\sim k^j,
\qquad k\to\infty,
\end{align*}
and
\begin{align*}
\left|
\frac{(M)_j(M+1)_j^2}
{(2M+2)_j(M+k+1)_j\,j!}
\right|
\to0, \qquad k\to\infty.
\end{align*}
Therefore, by dominated convergence for series, $F_k\to1$, $k\to\infty$. By Lemma \ref{gamma_ratio_asymptotic}
with $\alpha=1$ and $\beta=M+1$, it follows that
\[
\frac{k!}{(M)_{k+1}}
\sim
\frac{\Gamma(M)}{k^{M}},\qquad k\to\infty.
\]
Thus
\[
y_k=\frac{k!}{(M)_{k+1}}F_k\sim\frac{\Gamma(M)}{k^{M}},
\qquad k\to\infty,
\]
and the statement follows.
\end{proof}

\begin{lem}\label{rec_solutions_growth}
Let $\Re M>0$ and recall that the sequence $y_k,\,k\in\mathbb{N}_0$ is defined in \eqref{def_y_k}. The recurrence
\begin{equation}\label{recurrence_eq}
k^2(y_{k+1}-2y_k+y_{k-1})=M(M+1)y_k,\qquad k\in\mathbb{N},
\end{equation} has a minimal solution satisfying
\[
y_k\sim Ck^{-M},
\qquad k\to\infty,
\]
for some constant $C\neq0$.
\end{lem}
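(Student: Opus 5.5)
By Lemma \ref{recurrence_sol_lem}, the sequence $(y_k)$ of \eqref{def_y_k} solves \eqref{recurrence_eq}, and by Lemma \ref{solution_asymptotics} it satisfies $y_k\sim\Gamma(M)k^{-M}$. So we may take $C=\Gamma(M)\neq0$, since $\Gamma$ has no zeros. What remains is to show that this solution is minimal. The plan is to construct a second, linearly independent solution $(z_k)$ and prove $y_k/z_k\to0$. Any other solution $w$ independent of $y$ has the form $w=\lambda y+\nu z$ with $\nu\neq0$, so $y_k/w_k\to0$ will then follow.

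The cleanest way to get the second solution is reduction of order via the Casoratian. Rewrite \eqref{recurrence_eq} as
\[
y_{k+1}-\Bigl(2+\frac{M(M+1)}{k^2}\Bigr)y_k+y_{k-1}=0 .
\]
The coefficient of $y_{k-1}$ equals the coefficient of $y_{k+1}$, so for any two solutions $u,v$ the Casoratian $W_k=u_kv_{k+1}-u_{k+1}v_k$ is constant in $k$. Since $y_k\neq0$ for large $k$ (by its asymptotics), we define, for $k\geq k_0$,
\[
z_k=y_k\sum_{j=k_0}^{k-1}\frac{1}{y_jy_{j+1}} .
\]
A direct check shows $y_kz_{k+1}-y_{k+1}z_k=1$, and one verifies that $z$ satisfies the recurrence. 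The recurrence runs backward uniquely from any index $k\geq1$ because the coefficients are fine, and forward it is always solvable, so $z$ extends to a genuine solution for all $k$. It is independent of $y$ because its Casoratian with $y$ is nonzero.

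Next comes the growth comparison. Since $y_jy_{j+1}\sim\Gamma(M)^2j^{-2M}$, the terms $1/(y_jy_{j+1})\sim j^{2M}/\Gamma(M)^2$ have real part of exponent $2\Re M>0$. This is the main delicate point, because $M$ is complex and the summands oscillate in argument like $j^{2i\Im M}$. I would handle it by a summation comparison: with $g(j)=j^{2M}$ and $\Re(2M)>-1$, we have
\[
\sum_{j\le k}j^{2M}(1+o(1))=\frac{k^{2M+1}}{2M+1}\bigl(1+o(1)\bigr),
\]
which follows from Euler--Maclaurin or Stolz--Cesàro applied to the ratio with $k^{2M+1}/(2M+1)$, using that $|k^{2M+1}|$ grows monotonically. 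The $o(1)$ errors contribute $o(|k|^{2\Re M+1})$ since the sum of absolute values is $O(k^{2\Re M+1})$. Hence
\[
z_k\sim\frac{k^{M+1}}{(2M+1)\Gamma(M)},
\]
and so
\[
\frac{y_k}{z_k}\sim\frac{(2M+1)\Gamma(M)^2}{k^{2M+1}}\to0 ,
\]
because $\Re(2M+1)>1$.

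Alternatively, one could avoid reduction of order by invoking a Poincaré--Perron / Birkhoff-type classification of solutions of $k^2\Delta^2y=M(M+1)y$. Its characteristic exponents are the roots of $\rho(\rho-1)=M(M+1)$, namely $\rho=-M$ and $\rho=M+1$, which gives solutions $\sim k^{-M}$ and $\sim k^{M+1}$ and the same conclusion. The explicit Casoratian route is self-contained, though, and I would present it, with the asymptotic summation step done carefully for complex $M$.
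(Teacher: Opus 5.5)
Your proof is correct, but it takes a different route from the paper.

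\textbf{The paper's route.} The paper's proof of this lemma never uses the explicit $y_k$. It rewrites the recurrence as $y_{n+2}+a(n)y_{n+1}+b(n)y_n=0$ and notes that it falls into the exceptional case of Wong--Li's asymptotic theory: a double characteristic root $\rho=1$ that satisfies the auxiliary equation. It then reads off the indicial roots $-M$ and $M+1$ and quotes that theory for a basis of solutions behaving like $n^{-M}$ and $n^{M+1}$. A separate logarithmic subcase is needed when $2M+1\in\mathbb{N}$. That the particular $y_k$ of \eqref{def_y_k} is minimal is obtained only later, in the proof of Theorem~\ref{conj_2}, by combining this with Lemma~\ref{solution_asymptotics}. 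That step is implicit: a solution $\sim\Gamma(M)k^{-M}$ cannot have a component along the $k^{M+1}$ solution.

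\textbf{Your route.} You use the fact that the coefficients of $y_{k+1}$ and $y_{k-1}$ are both $1$, so the Casoratian is constant. Reduction of order then gives the second solution explicitly, with $y_k/z_k=1/S_k$ exactly, where $S_k=\sum_{j=k_0}^{k-1}(y_jy_{j+1})^{-1}$. Minimality therefore reduces to $|S_k|\to\infty$. Your summation estimate with $\Re(2M)>0$ establishes this correctly despite the oscillating factor $j^{2i\operatorname{Im}M}$. The full asymptotic for $z_k$ is a by-product and not strictly needed.

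\textbf{What each buys.} Your argument is self-contained, avoids the case split $2M+1\in\mathbb{N}$, and shows directly that the specific $y_k$ is minimal with $C=\Gamma(M)$, which is exactly what the later application of Pincherle's theorem requires. Its price is that it depends on Lemma~\ref{solution_asymptotics}, i.e.\ on already having a solution with the right decay. The paper's approach describes the whole solution space without knowing any solution in advance.

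\textbf{A minor point.} For complex sequences, Stolz--Cesàro needs a bounded-variation hypothesis such as $\sum_{j\le k}|b_{j+1}-b_j|=O(|b_k|)$, not just monotone growth of $|b_k|$. That hypothesis holds here. Still, the integral comparison $\sum_{j<k}j^{2M}=k^{2M+1}/(2M+1)+O(k^{2\Re M})$, combined with your $o(k^{2\Re M+1})$ bound on the error terms, is the cleaner way to state that step.
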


\begin{proof}
Set $k=n+1$ and rewrite \eqref{recurrence_eq} as
\begin{align}\label{rewritten}
y_{n+2}+a(n)y_{n+1}+b(n)y_n=0,\qquad n\in\mathbb{N}_0
\end{align}
where
\[
a(n)=-2-\frac{M(M+1)}{(n+1)^2},
\qquad
b(n)=1.
\]
In the notation of \cite{Wong},
\begin{align*}
a_0=-2,\qquad a_1=0,\qquad a_2=-M(M+1),\qquad
b_0=1,\qquad b_1=b_2=0,
\end{align*}
and the associated characteristic equation of \eqref{rewritten} is
\[
\rho^2+a_0\rho+b_0
=
\rho^2-2\rho+1=0,
\]
which has the double root $\rho=1$. Moreover, this root
satisfies the so-called auxiliary equation \cite[Eq. (1.6)]{Wong}
\[
a_1\rho+b_1=0.
\]
Hence, the recurrence \eqref{rewritten} falls under the
exceptional case considered in \cite[Sec.~7]{Wong}.

The corresponding indicial polynomial of \eqref{rewritten} (see \cite[Eq.~(1.8)]{Wong}) is
\[
q(\alpha)
=
\alpha(\alpha-1)\rho^2
+(a_1\alpha+a_2)\rho+b_2
=
\alpha(\alpha-1)-M(M+1),
\]
whose roots are
\[
\alpha_1=-M,
\qquad
\alpha_2=M+1.
\]
Since $\Re M>0$, we have
\[
\Re(\alpha_2-\alpha_1)=2\Re M+1>0.
\]

If $2M+1\notin\mathbb{N}$, then by the results 
of \cite[Secs.~7--8]{Wong}, there exist two linearly independent asymptotic
solutions of \eqref{rewritten} satisfying
\[
y_n^{(1)}\sim C_1n^{-M},
\qquad
y_n^{(2)}\sim C_2n^{M+1},\qquad n\to\infty,
\]
where $C_1,\,C_2\neq0$.

If $2M+1\in\mathbb{N}$, then \cite[Secs. 7 and 9]{Wong} state that one solution of \eqref{rewritten} again satisfies
\[
y_n^{(1)}\sim C_1n^{-M},\qquad C_1\neq0,\qquad n\to\infty
\]
whereas the second linearly independent solution is of the form
\[
y_n^{(2)}
=
z_n+c\,y_n^{(1)}\,\log n,
\]
where
\[
z_n\sim C_2n^{M+1},\qquad n\to\infty,
\]
with $C_2\neq0$, and $c$ is a constant that may be zero. Since $\Re M>0$ and
\[
n^{-M}\log n=o(n^{M+1}),
\qquad n\to\infty,
\]
it follows
\[
y_n^{(2)}\sim C_2n^{M+1},\qquad n\to\infty.
\]
in this case also.

Thus, in either case,
\[
\frac{y_n^{(1)}}{y_n^{(2)}}
\sim
\frac{C_1}{C_2}n^{-2M-1}
\to 0,
\qquad n\to\infty,\qquad \Re M>0.
\]
Therefore, $y_n^{(1)}$ is a minimal solution. Finally, since
$k=n+1$, $\Re M>0$ and
\[
(n+1)^{-M}\sim n^{-M},
\qquad n\to\infty,
\]
the recurrence \eqref{recurrence_eq} has a minimal solution satisfying
\[
y_k\sim Ck^{-M},
\qquad k\to\infty,
\]
for some $C\neq0$.
\end{proof}

\begin{lem}\label{hyp_to_sum}
Let $\Re M>0$. Then
\[
\frac{1}{(M+1)(M+2)}
{}_3F_2\left(
\begin{matrix}
1,\,2,\,M+1\\
M+2,\,M+3
\end{matrix}
;1
\right)
=
1-M\sum_{k=0}^{\infty}
\frac{k!}{(M+k+1)(M+2)_k}.
\]
\end{lem}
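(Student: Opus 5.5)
The plan is to reduce both sides to elementary series and then prove the resulting identity by telescoping. Convergence is never an issue, since every series involved has terms of size $O(k^{-\Re M-1})$ and $\Re M>0$.

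\emph{Step 1: rewrite the left-hand side.} I expand the ${}_3F_2$ by the definition \eqref{ghf}, using
\[
(2)_n/n!=n+1,\qquad (M+1)_n/(M+2)_n=(M+1)/(M+n+1),\qquad (M+2)(M+3)_n=(M+2)_{n+1}.
\]
The $n$th term divided by $(M+1)(M+2)$ then becomes $(n+1)!/\bigl((M+n+1)(M+2)_{n+1}\bigr)$. After the shift $k=n+1$, the left-hand side is
\[
\sum_{k=1}^{\infty}\frac{c_k}{M+k},\qquad c_k:=\frac{k!}{(M+2)_k}.
\]
By Lemma~\ref{gamma_ratio_asymptotic}, $c_k=O(k^{-\Re M-1})$, so this series converges absolutely (the ${}_3F_2$ at $1$ converges anyway because $\Re(2M+5-M-4)>0$).

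\emph{Step 2: restate what must be shown.} Moving the right-hand side's sum across and isolating the $k=0$ term $M/(M+1)$, the claim is equivalent to
\[
\sum_{k=1}^{\infty}c_k\Bigl(\frac{1}{M+k}+\frac{M}{M+k+1}\Bigr)=1-\frac{M}{M+1}=\frac{1}{M+1}.
\]

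\emph{Step 3: telescope.} This is the key step. I would look for a sequence $e_k$ with $e_k-e_{k+1}$ equal to the summand, and try
\[
e_k=c_k\,\frac{M+k+1}{M+k}.
\]
Using $c_{k+1}=c_k(k+1)/(M+k+2)$, one gets
\[
e_k-e_{k+1}=c_k\,\frac{(M+k+1)^2-(k+1)(M+k)}{(M+k)(M+k+1)}.
\]
Its numerator expands to $M^2+M(k+1)+k+1$, which is exactly the numerator of $\frac{1}{M+k}+\frac{M}{M+k+1}$ over the same common denominator $(M+k)(M+k+1)$. So $e_k-e_{k+1}$ is the summand.

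\emph{Step 4: sum the telescoping series.} Lemma~\ref{gamma_ratio_asymptotic} gives $|c_k|=O(k^{-\Re M-1})$, hence $e_k\to0$. The sum therefore equals $e_1=\frac{1}{M+2}\cdot\frac{M+2}{M+1}=\frac{1}{M+1}$, which proves the lemma.

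The only genuinely non-routine point is finding the telescoping sequence $e_k$ in Step 3. Once $e_k$ is guessed, the rest is a direct check.
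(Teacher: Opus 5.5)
Your proof is correct, and it finishes the evaluation differently from the paper.

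\textbf{Shared starting point.} The paper also expands the ${}_3F_2$ termwise, as
\[
\sum_{k\ge0}\frac{k+1}{(M+k+1)(M+k+2)}\,\frac{k!}{(M+2)_k},
\]
which is your $\sum_{k\ge1}c_k/(M+k)$ before the index shift.

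\textbf{The paper's route.} It uses the partial-fraction split
\[
\frac{k+1}{(M+k+1)(M+k+2)}=\frac{M+1}{M+k+2}-\frac{M}{M+k+1}.
\]
The second piece reproduces the term $-M\sum_{k}\frac{k!}{(M+k+1)(M+2)_k}$ on the right-hand side. The first piece is $(M+1)\sum_{k\ge0}k!/(M+2)_{k+1}$, and the paper shows it equals $1$ by writing
\[
\frac{k!}{(M+2)_{k+1}}=B(k+1,M+2)=\int_0^1 t^k(1-t)^{M+1}\,dt
\]
and summing the geometric series under the integral.

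\textbf{Your route.} You move the right-hand sum to the left and exhibit an explicit antidifference $e_k=c_k(M+k+1)/(M+k)$ for the combined summand. The only analytic input is then $c_k\to0$, from Lemma~\ref{gamma_ratio_asymptotic}.

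\textbf{What each buys.} Your argument is purely algebraic. It avoids the sum--integral interchange, which the paper performs without comment; that step is justified by Tonelli, since $|(1-t)^{M+1}|=(1-t)^{\Re M+1}$. The paper's argument has the advantage that nothing needs to be guessed: partial fractions and the Beta integral are mechanical.

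\textbf{How close they are.} The paper's auxiliary sum also telescopes:
\[
\frac{k!}{(M+2)_{k+1}}=d_k-d_{k+1},\qquad d_k=\frac{k!}{(M+1)_{k+1}},
\]
so it equals $d_0=1/(M+1)$ directly.
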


\begin{proof}
Notice that
\[
(2)_k=(k+1)!,
\qquad
\frac{(M+1)_k}{(M+2)_k}
=\frac{M+1}{M+k+1},
\qquad
(M+3)_k=\frac{M+k+2}{M+2}(M+2)_k.
\]
Hence
\begin{align*}
&\frac{1}{(M+1)(M+2)}
{}_3F_2\left(
\begin{matrix}
1,\,2,\,M+1\\
M+2,\,M+3
\end{matrix}
;1
\right)
=
\frac{1}{(M+1)(M+2)}
\sum_{k=0}^{\infty}
\frac{(2)_k(M+1)_k}
     {(M+2)_k(M+3)_k} \\
&=
\sum_{k=0}^{\infty}
\frac{k+1}
     {(M+k+1)(M+k+2)}
\frac{k!}{(M+2)_k} \\
&=
\sum_{k=0}^{\infty}
\left(
\frac{M+1}{M+k+2}
-\frac{M}{M+k+1}
\right)
\frac{k!}{(M+2)_k} \\
&=
(M+1)\sum_{k=0}^{\infty}\frac{k!}{(M+k+2)(M+2)_k}
-
M\sum_{k=0}^{\infty}
\frac{k!}{(M+k+1)(M+2)_k}=:S_1+S_2.
\end{align*}

We evaluate $S_1$ by using the beta function. Since
\begin{align*}
B(k+1,M+2)
=
\frac{k!\,\Gamma(M+2)}
     {\Gamma(M+k+3)}
=
\frac{k!}{(M+2)_{k+1}},
\end{align*}
and
\begin{align*}
\frac{1}{M+k+2}\frac{k!}{(M+2)_k}
=
\frac{k!}{(M+2)_{k+1}}
\end{align*}
we have
\begin{align*}
S_1=
\sum_{k=0}^{\infty}\frac{k!}{(M+2)_{k+1}}
&=
\sum_{k=0}^{\infty}B(k+1,M+2)
=
\sum_{k=0}^{\infty}
\int_0^1 t^k(1-t)^{M+1}\,dt\\
&=
\int_0^1(1-t)^{M+1}
\sum_{k=0}^{\infty}t^k\,dt=
\int_0^1(1-t)^{M}\,dt
=
\frac{1}{M+1}.
\end{align*}
Therefore
\[
\frac{1}{(M+1)(M+2)}
{}_3F_2\left(
\begin{matrix}
1,\,2,\,M+1\\
M+2,\,M+3
\end{matrix}
;1
\right)
=
1-
S_2,
\]
as required.
\end{proof}

\begin{lem}\label{hyp_to_lerch}
Let $\Re M>0$. Then
\[
\frac{1}{M+1}
\sum_{k=0}^{\infty}
\frac{k!}{(M+k+1)(M+2)_k}
=
2\Phi(-1,2,M+1),
\]
where $\Phi$ denotes the Lerch transcendent,
\[
\Phi(-1,2,M+1)
=
\sum_{n=0}^{\infty}
\frac{(-1)^n}{(n+M+1)^2}.
\]
\end{lem}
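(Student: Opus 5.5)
We aim to prove
\[
\sum_{k=0}^{\infty}\frac{k!}{(M+k+1)(M+2)_k}=2(M+1)\Phi(-1,2,M+1),
\]
and the plan is to write each summand as a beta-type integral, sum the resulting geometric series under the integral sign, and then expand the integrand back into an alternating series.

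\textbf{Step 1 (integral form of the summand).} Since $(M+2)_k=(M+1)_{k+1}/(M+1)$, the summand equals $(M+1)\,k!/\bigl((M+k+1)(M+1)_{k+1}\bigr)$. Beta integrals give
\[
\frac{k!}{(M+1)_{k+1}}=\frac{k!\,\Gamma(M+1)}{\Gamma(M+k+2)}=B(k+1,M+1)=\int_0^1 t^{M}(1-t)^k\,dt,
\qquad
\frac{1}{M+k+1}=\int_0^1 s^{M+k}\,ds .
\]
Hence the $k$-th term is $(M+1)\int_0^1\!\int_0^1 t^M s^M\,\bigl(s(1-t)\bigr)^k\,ds\,dt$.

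\textbf{Step 2 (summation).} Summing over $k$ under the integral sign gives
\[
\sum_{k\ge0}\frac{k!}{(M+k+1)(M+2)_k}=(M+1)\int_0^1\!\int_0^1\frac{t^Ms^M}{1-s(1-t)}\,ds\,dt .
\]
Dominated convergence justifies the interchange: all the terms $|t^Ms^M(s(1-t))^k|$ are nonnegative with integrals $|\cdot|$-bounded by $(\Re M+1)^{-1}\,\Gamma(k+1)\Gamma(\Re M+1)/\Gamma(k+\Re M+2)\sim k^{-\Re M-2}$, which is summable for $\Re M>0$. Equivalently, one can apply Tonelli to the absolute values first.

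\textbf{Step 3 (evaluation of the double integral).} The substitution $u=1-t$ transforms the integrand into $(1-u)^M s^M/(1-su)$. The key move is the change of variables $x=su$, or alternatively a factorization adapted to producing $(-1)^n$. What we ultimately want is
\[
2\sum_{n\ge0}\frac{(-1)^n}{(n+M+1)^2}=2\int_0^1\!\int_0^1\frac{(xy)^M}{1+xy}\,dx\,dy .
\]
So the target is to show that
\[
\iint_{[0,1]^2}\frac{t^Ms^M}{1-s+st}\,ds\,dt=2\iint_{[0,1]^2}\frac{(xy)^M}{1+xy}\,dx\,dy .
\]

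I would attempt this by integrating out one variable. For fixed $t$,
\[
\int_0^1\frac{s^M}{1-s(1-t)}\,ds=\sum_{k\ge0}\frac{(1-t)^k}{M+k+1},
\]
which brings us back to the original series. A more promising route is to use the substitution $s\mapsto s$, $t\mapsto$ chosen so that $1-s+st=1-s(1-t)$ becomes $(1+w)$-type, for instance $w=s(1-t)/(1-s(1-t))$. This is Beukers-type manipulation, and I expect it to be the main obstacle.

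\textbf{Fallback via a transformation.} If the substitution is messy, I would instead identify $\sum_k k!/((M+k+1)(M+2)_k)$ with $\frac{1}{M+1}\,{}_3F_2(1,1,M+1;M+2,M+2;1)$. Thomae's relation \eqref{thomae} then converts this ${}_3F_2$ into one with a convergent alternating structure. Alternatively, I would use the known contiguous evaluation
\[
{}_3F_2(1,1,a;a+1,a+1;1)=a^2\sum_{n}\ldots=a^2\psi_1\text{-type},
\]
together with \eqref{lerch_polygamma_difference} to recognize $\psi_1\bigl(\tfrac{M+1}{2}\bigr)-\psi_1\bigl(\tfrac{M+2}{2}\bigr)$.

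\textbf{Last resort (recurrence in $M$).} Compare both sides as analytic functions of $M$ in $\Re M>0$. Both satisfy $f(M)+f(M+1)=\text{simple term}$: for the right side, $\Phi(-1,2,M+1)+\Phi(-1,2,M+2)=(M+1)^{-2}$. For the left side I would verify the analogous relation by telescoping in $k$. Since both sides tend to $0$ as $M\to+\infty$, uniqueness of the bounded solution of this functional equation along $M+\mathbb N$ gives equality. Identifying the correct functional relation for the left side and performing the telescoping is the step I expect to be hardest.
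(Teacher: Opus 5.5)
\textbf{There is a genuine gap: Step 3, the only nontrivial part of the lemma, is left open, and none of your fallbacks closes it.}

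Your Steps 1--2 match the paper's opening: the beta integral for $k!/(M+1)_{k+1}$, $1/(M+k+1)=\int_0^1 s^{M+k}\,ds$, and geometric summation justified by absolute convergence. Your reduction to evaluating $\iint_{[0,1]^2} t^Ms^M/(1-s+st)\,ds\,dt$ is correct. The substitution you propose for Step 3 does not help. With $u=1-t$, $x=su$ the integral becomes $\int_0^1 (1-x)^{-1}\int_x^1 (s-x)^M s^{-1}\,ds\,dx$. Its inner integral is an incomplete-beta-type function of $x$ and gives no route to $(-1)^n$.

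The missing idea is to substitute the product of the two original variables, $u=st$, $v=s$, with Jacobian $1/v$. This maps the square onto $0<u<v<1$ and turns the integrand into $u^M/\bigl(v(1+u-v)\bigr)$. Partial fractions then give
\[
\int_u^1\frac{dv}{v(1+u-v)}=\frac{1}{1+u}\int_u^1\Bigl(\frac1v+\frac1{1+u-v}\Bigr)dv=-\frac{2\log u}{1+u}.
\]
Expand $(1+u)^{-1}=\sum_{n\ge0}(-u)^n$ and integrate termwise using $\int_0^1u^{n+M}\log u\,du=-(n+M+1)^{-2}$. This is justified since $\sum_n(n+\Re M+1)^{-2}<\infty$, and it yields $2\Phi(-1,2,M+1)$. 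This is exactly the paper's argument. The same product substitution also reduces your target $2\iint(xy)^M/(1+xy)\,dx\,dy$ to $-2\int_0^1u^M\log u\,(1+u)^{-1}du$.

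Your fallbacks do not close the gap as written.
\begin{itemize}
\item Your identification of the sum with $\frac{1}{M+1}{}_3F_2(1,1,M+1;M+2,M+2;1)$ is right. But the ``known contiguous evaluation'' of ${}_3F_2(1,1,a;a+1,a+1;1)$ is the lemma itself restated.
\item No specific Thomae transformation is given. Thomae maps a ${}_3F_2(1)$ to another ${}_3F_2(1)$ and does not by itself produce an alternating series.
\item Your last-resort route is sound and can be completed elementarily, but its one nontrivial input is exactly what you defer. Write $c_k(M)=B(k+1,M+1)$, so the left side is $L(M)=\sum_{k\ge0}c_k(M)/(M+k+1)$. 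The beta identity $c_k(M+1)=c_k(M)-c_{k+1}(M)$ and an index shift give
\[
L(M)+L(M+1)=\frac{1}{(M+1)^2}+\frac{1}{M+1}\sum_{k\ge0}B(k+1,M+2)=\frac{2}{(M+1)^2}.
\]
This matches $R(M)+R(M+1)=2/(M+1)^2$ for $R(M)=2\Phi(-1,2,M+1)$. Then $D=L-R$ satisfies $D(M)=-D(M+1)$, and both $L(M+N)$ and $R(M+N)$ tend to $0$ as $N\to\infty$, so $D\equiv0$.
\end{itemize}
Either this computation or the product substitution above must actually be supplied for the proposal to be a proof.
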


\begin{proof}
By the definition of the Pochhammer symbol and properties of the
beta function,
\begin{align*}
&\frac{1}{M+1}
\sum_{k=0}^{\infty}
\frac{k!}{(M+k+1)(M+2)_k}
=
\sum_{k=0}^{\infty}
\frac{\Gamma(k+1)\Gamma(M+1)}
     {(M+k+1)\Gamma(M+k+2)}\\
&=
\sum_{k=0}^{\infty}
\frac{B(M+1,k+1)}{M+k+1}
=
\sum_{k=0}^{\infty}
\frac{1}{M+k+1}
\int_0^1 y^M(1-y)^k\,dy=:SI.
\end{align*}
The identity
\[
\frac{1}{M+k+1}=\int_0^1x^{M+k}\,dx
\]
gives
\[
SI
=
\sum_{k=0}^{\infty}
\int_0^1\int_0^1
x^{M+k}y^M(1-y)^k\,dx\,dy.
\]
Since $\Re M>0$,
\begin{align*}
&\sum_{k=0}^{\infty}
\int_0^1\int_0^1
\left|x^{M+k}y^M(1-y)^k\right|\,dx\,dy\\
&=
\int_0^1\int_0^1
x^{\Re M}y^{\Re M}
\sum_{k=0}^{\infty}[x(1-y)]^k\,dx\,dy\\
&=
\int_0^1\int_0^1
\frac{x^{\Re M}y^{\Re M}}
{1-x+xy}\,dx\,dy
<\infty.
\end{align*}
Hence, by Fubini's theorem \cite[Thm.~2.37]{Folland},
\[
SI
=
\int_0^1\int_0^1
x^My^M
\sum_{k=0}^{\infty}(x(1-y))^k\,dx\,dy
=
\int_0^1\int_0^1
\frac{x^My^M}{1-x+xy}\,dx\,dy=:II.
\]
Under the change of variables $u=xy$ and $v=x$,
we have
\[
x=v,\qquad y=\frac{u}{v},
\qquad
\left|\frac{\partial(x,y)}{\partial(u,v)}\right|=\frac1v,
\]
and the region of integration becomes $0<u<v<1$. Therefore
\begin{align*}
II
&
=
\int_0^1u^M
\left(
\int_u^1
\frac{dv}{v(1+u-v)}
\right)du\\
&=
\int_0^1\frac{u^M}{1+u}
\left(
\int_u^1
\left(
\frac1v+\frac1{1+u-v}
\right)dv
\right)du=
-2\int_0^1\frac{u^M\log u}{1+u}\,du.
\end{align*}
Since
\begin{align*}
\frac1{1+u}=\sum_{n=0}^{\infty}(-u)^n,
\qquad 0\leq u<1,
\end{align*}
and
\begin{align*}
&\sum_{n=0}^{\infty}
\int_0^1
\left|u^{n+M}\log u\right|\,du
=
-\sum_{n=0}^{\infty}\int_{0}^{1}u^{n+\Re M}\log u\,du=-\sum_{n=0}^{\infty}\frac{\partial}{\partial \Re M}\int_{0}^{1}u^{n+\Re M}\,du\\
&=
-\sum_{n=0}^{\infty}\frac{\partial}{\partial \Re M}\frac{1}{n+\Re M+1}
=
\sum_{n=0}^{\infty}
\frac{1}{(n+\Re M+1)^2}
<\infty,\qquad \Re M>0,
\end{align*}
the absolute convergence estimate above allows us to integrate $II$ termwise.
Thus
\begin{align*}
II
=
-2\sum_{n=0}^{\infty}
(-1)^n
\int_0^1u^{n+M}\log u\,du
=
2\sum_{n=0}^{\infty}
\frac{(-1)^n}{(n+M+1)^2}
=
2\Phi(-1,2,M+1).
\end{align*}
\end{proof}

\begin{lem}\label{lem:relation}
Let
\[
F_n={}_3F_2\left(
\begin{matrix}
M,M+1,M+1\\
2M+2,M+n
\end{matrix};1
\right),
\qquad n=1,\,2,\,3.
\]
Then
\[
2(F_3-F_2)
+(M+1)(M+2)(F_1-F_2)
-M(M+1)^2F_2=0.
\]
\end{lem}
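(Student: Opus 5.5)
The relation is simply the case $k=1$ of the three-term recurrence in Lemma~\ref{recurrence_sol_lem}, rewritten in terms of the ${}_3F_2$ values. Throughout we assume $\Re M>0$, as in the surrounding lemmas; this is the standing assumption under which all three series $F_1,F_2,F_3$ converge at $z=1$.

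\emph{Step 1: express $F_n$ through $y_k$.} Comparing with \eqref{def_y_k}, the lower parameter $M+k+1$ equals $M+n$ when $k=n-1$. Hence
\[
y_k=\frac{k!}{(M)_{k+1}}\,F_{k+1},\qquad k=0,1,2,
\]
which written out gives
\[
y_0=\frac{F_1}{M},\qquad y_1=\frac{F_2}{M(M+1)},\qquad y_2=\frac{2F_3}{M(M+1)(M+2)}.
\]

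\emph{Step 2: specialise the recurrence.} Lemma~\ref{recurrence_sol_lem} with $k=1$ gives
\[
y_2-2y_1+y_0=M(M+1)y_1 .
\]
Substituting the expressions from Step~1 and multiplying through by $M(M+1)(M+2)$, we obtain
\[
2F_3-2(M+2)F_2+(M+1)(M+2)F_1-M(M+1)(M+2)F_2=0 .
\]

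\emph{Step 3: match the coefficients.} The coefficients of $F_3$ and $F_1$ already agree with the claimed identity. For $F_2$, the claimed identity has coefficient $-2-(M+1)(M+2)-M(M+1)^2$, while Step~2 gives $-2(M+2)-M(M+1)(M+2)$. Expanding both,
\[
-2-(M+1)(M+2)-M(M+1)^2=-(M^3+3M^2+4M+4)=-2(M+2)-M(M+1)(M+2),
\]
so the two coefficients coincide. This is a purely routine polynomial check and completes the proof.

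There is no real obstacle. The only points needing care are the index shift $n=k+1$ and making sure the condition $\Re M>0$ from Lemma~\ref{recurrence_sol_lem} is in force. If the identity were wanted for other $M$, one would extend it by analytic continuation in $M$, but that is not needed in the paper's setting.
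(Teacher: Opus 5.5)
Your proof is correct, but it takes a different route from the paper.

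\textbf{The paper's route.} The paper does not invoke Lemma~\ref{recurrence_sol_lem} here; it verifies the identity directly on the series. Writing $t_k$ for the terms of $F_2$, it expresses $F_3=\sum_k\frac{M+2}{M+k+2}t_k$ and $F_1=\sum_k\frac{M+k+1}{M+1}t_k$. It then finds the telescoping certificate $G_k=-k(M+2)(2M+k+1)t_k$, for which the combined summand equals $G_{k+1}-G_k$. The conclusion follows from $G_0=0$ and $G_k=O(k^{-1})$. That argument is self-contained and never uses the Euler integral. It therefore holds without the restriction $\Re M>0$: the parameter excess of $F_n$ is $n$, so convergence at $z=1$ does not depend on $M$.

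\textbf{Your route.} The observation $y_k=\frac{k!}{(M)_{k+1}}F_{k+1}$ reduces the lemma to the case $k=1$ of \eqref{recurrence_sol_eq} plus a polynomial check. That check is right: both coefficients of $F_2$ equal $-(M^3+3M^2+4M+4)$. No new computation is needed. It also shows why the simplification in the proof of Theorem~\ref{conj_2} works: it is exactly $y_0=(2+M(M+1))y_1-y_2$, i.e.\ $2+M(M+1)-y_2/y_1=y_0/y_1=(M+1)F_1/F_2$. So the lemma is in fact redundant given Lemma~\ref{recurrence_sol_lem}.

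\textbf{What each buys.} The cost of your route is that you inherit $\Re M>0$ from Lemma~\ref{recurrence_sol_lem}. That is all that is needed where the lemma is applied, whereas the paper's verification covers all admissible $M$.

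One small correction: $\Re M>0$ is not what makes $F_1,F_2,F_3$ converge. Its role is in the integral representation of $y_k$ and in the vanishing of the boundary terms at $t=0$ in the proof of Lemma~\ref{recurrence_sol_lem}.
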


\begin{proof}
The identity follows from the contiguous relations for ${}_3F_2(1)$
given in \cite[p.~63]{Bailey1954}, see also \cite[Ch.~IV]{Wilson}. We verify it directly. Define
\begin{align*}
t_k=
\frac{(M)_k(M+1)_k^2}
     {(2M+2)_k(M+2)_k\,k!},
\qquad k\in\N_0.
\end{align*}
Thus
\begin{align*}
F_2=\sum_{k=0}^{\infty}t_k.
\end{align*}
Since
\begin{align*}
\frac{(M+2)_k}{(M+3)_k}
=\frac{M+2}{M+k+2},
\end{align*}
we have
\begin{align*}
F_3
=
\sum_{k=0}^{\infty}
\frac{M+2}{M+k+2}\,t_k.
\end{align*}
Likewise,
\begin{align*}
\frac{(M+2)_k}{(M+1)_k}
=\frac{M+k+1}{M+1},
\end{align*}
and hence
\begin{align*}
F_1
=
\sum_{k=0}^{\infty}
\frac{M+k+1}{M+1}\,t_k.
\end{align*}
Consequently,
\begin{align*}
&2(F_3-F_2)
 +(M+1)(M+2)(F_1-F_2)
 -M(M+1)^2F_2
=
\sum_{k=0}^{\infty} A_k t_k,
\end{align*}
where
\[
A_k=
2\left(\frac{M+2}{M+k+2}-1\right)
+(M+1)(M+2)
\left(\frac{M+k+1}{M+1}-1\right)
-M(M+1)^2.
\]
After simplification,
\[
A_k
=
\frac{(M+2)
\left(
-M^3-M^2k-2M^2+Mk-M+k^2+k
\right)}
{M+k+2}.
\]
On the other hand, from the definition of \(t_k\),
\[
\frac{t_{k+1}}{t_k}
=
\frac{(M+k)(M+k+1)^2}
     {(2M+k+2)(M+k+2)(k+1)}.
\]
It follows by direct simplification that
\[
A_k t_k
=
-(k+1)(M+2)(2M+k+2)t_{k+1}
+k(M+2)(2M+k+1)t_k.
\]
Define
\[
G_k=-k(M+2)(2M+k+1)t_k.
\]
Then
\[
A_k t_k=G_{k+1}-G_k,
\]
and therefore
\begin{align}\label{toshow0}
2(F_3-F_2)
 +(M+1)(M+2)(F_1-F_2)
 -M(M+1)^2F_2
=
\sum_{k=0}^{\infty}(G_{k+1}-G_k).
\end{align}
It remains to show that the right-hand side of \eqref{toshow0} equals zero.  Since
\begin{align*}
t_k=
\frac{\Gamma(2M+2)\Gamma(M+2)}
{\Gamma(M)\Gamma(M+1)^2}
\frac{\Gamma(M+k)}{\Gamma(k+1)}
\frac{\Gamma(M+k+1)}{\Gamma(M+k+2)}
\frac{\Gamma(M+k+1)}{\Gamma(2M+k+2)},
\end{align*}
by Lemma \ref{gamma_ratio_asymptotic}
\begin{align*}
\frac{\Gamma(M+k)}{\Gamma(k+1)}\sim k^{M-1},
\qquad
\frac{\Gamma(M+k+1)}{\Gamma(M+k+2)}\sim\frac{1}{k},
\qquad
\frac{\Gamma(M+k+1)}{\Gamma(2M+k+2)}\sim\frac{1}{k^{M+1}},\qquad k\to\infty,
\end{align*}
and consequently
\begin{align*}
t_k=O(k^{-3}),\qquad k\to\infty.
\end{align*}
Therefore,
\begin{align*}
G_k
=
-k(M+2)(2M+k+1)t_k
=O(k^{-1})\to0,\qquad k\to\infty.
\end{align*}
Since $G_0=0$, we obtain
\begin{align*}
\sum_{k=0}^{\infty}(G_{k+1}-G_k)
=
\lim_{N\to\infty}(G_{N+1}-G_0)
=0.
\end{align*}
and the proof follows.
\end{proof}

\subsection{Proof of Theorem \ref{conj_2}}
Let $\Re M>0$. According to the conditions of Theorem
\ref{conj_2},
\[
b_0=0,\qquad
b_{2n-1}=M+1,\qquad
b_{2n}=\frac{M}{n^2},
\qquad n\in\mathbb{N}.
\]
Since $\Re M>0$, both $M$ and $M+1$ lie in the open right
half-plane. Hence there exists $\varepsilon\in(0,\pi/2)$ such that
\[
|\arg M|\leqslant\frac{\pi}{2}-\varepsilon,
\qquad
|\arg(M+1)|\leqslant\frac{\pi}{2}-\varepsilon.
\]
Consequently, 
\begin{align*}
b_n\in G_\varepsilon=\left\{w\in\mathbb{C}:|\arg w|\leqslant
\frac{\pi}{2}-\varepsilon\right\}\cup\{0\}
,\qquad n\in\mathbb{N},
\end{align*} and $b_1=M+1\neq0$. Moreover,
\[
\sum_{n=1}^{\infty}|b_n|
\geqslant
\sum_{n=1}^{\infty}|b_{2n-1}|
=
\sum_{n=1}^{\infty}|M+1|
=\infty.
\]
Therefore, by Lemma \ref{cf_convergence_1/bn}, the continued fraction
$b_0+\k(1/b_n)$ converges. Denote its limit by $L$. The odd part of the continued fraction $b_0+\k(1/b_n)$ (see Lemma \ref{odd_part}) is
\begin{align}\label{odd_part_expr}
\frac{1}{M+1}-\frac{1}{(M+1)(2+M(M+1)+K)},
\end{align}
where
\begin{align}\label{tail_cf}
K=\K_{n=2}^{\infty}\frac{-1}{2+\frac{M(M+1)}{n^2}}=\K_{n=1}^{\infty}\frac{-1}{2+\frac{M(M+1)}{(n+1)^2}}.
\end{align}
Because $\k(1/b_n)$ converges, the convergents of its odd part have the same limit $L$. Consider the tail $K$. The associated recurrence relation of \eqref{tail_cf} is:
\begin{align*}
x_n-\left(2+\frac{M(M+1)}{(n+1)^2}\right)x_{n-1}+x_{n-2}=0,\qquad n\in\N.
\end{align*}
Let $k=n+1$ and define $y_k=x_{k-2}$. The sequence $y_k$ satisfies the recurrence relation
\begin{align}\label{cf_recurrence}
k^2(y_{k+1}-2y_k+y_{k-1})=M(M+1)y_k,
\qquad k\in\mathbb{N}.
\end{align}
Due to Lemma \ref{recurrence_sol_lem}, the recurrence \eqref{cf_recurrence} has a solution
\begin{align*}
y_k=\frac{k!}{(M)_{k+1}} \Fhyp{M,\,M+1,\,M+1}{2M+2,\,M+k+1}{1},\qquad k\in\mathbb{N}_0.
\end{align*}
By Lemmas \ref{solution_asymptotics} and \ref{rec_solutions_growth}, this solution is minimal and $y_0\neq0$. Thus, applying Pincherle Theorem, Lemma \ref{Pincherle}, we obtain
\begin{align}\label{tail_value_hyp}
K=-\frac{x_0}{x_{-1}}=-\frac{y_2}{y_1}=-\frac{2}{M+2}\frac{{}_3F_2(M,\,M+1,\,M+1;\,2M+2,\,M+3;\,1)}{{}_3F_2(M,\,M+1,\,M+1;\,2M+2,\,M+2;\,1)}.
\end{align}
Inserting \eqref{tail_value_hyp} into \eqref{odd_part_expr}, we obtain
\begin{align}\label{cf_value_raw}
L=\frac{1}{M+1}-\frac{1}{(M+1)\left(2+M(M+1)-\frac{2}{M+2}\frac{F_3}{F_2}\right)},
\end{align}
where $F_n={}_3F_2(M,\,M+1,\,M+1;\,2M+2,\,M+n;\,1)$, $n=1,\,2,\,3$. The functions $F_1,\,F_2$ and $F_3$ satisfy (see Lemma \ref{lem:relation})
\begin{align*}
2(F_3-F_2)+(M+1)(M+2)(F_1-F_2)-M(M+1)^2F_2=0,
\end{align*}
or, equivalently,
\begin{align*}
2\frac{F_3}{F_2}=2+(M+1)(M+2)+M(M+1)^2-(M+1)(M+2)\frac{F_1}{F_2}.
\end{align*}
From this, part of the denominator in \eqref{cf_value_raw} simplifies to
\begin{align*}
2+M(M+1)-\frac{2}{M+2}\frac{F_3}{F_2}=(M+1)\frac{F_1}{F_2}.
\end{align*}
Using theorem due to Gauss (see \cite[Thm. 1.3]{Bailey}) 
\begin{align*}
F_1&=\Fhyp{M,\,M+1,\,M+1}{2M+2,\,M+1}{1}={}_2F_1(M,\,M+1;\,2M+2;\,1)=\frac{\Gamma(2M+2)}{\Gamma(M+2)\Gamma(M+1)}\\
&=\frac{\Gamma(2M+2)}{(M+1)(\Gamma(M+1))^2}
\end{align*}
we obtain
\begin{align*}
L=\frac{1}{M+1}\left(1-\frac{(\Gamma(M+1))^2\,F_2}{\Gamma(2M+2)}\right).
\end{align*}
Applying the Thomae transformation \eqref{thomae}, yields
\begin{align*}
F_2&={}_3F_2(M+1,\,M+1,\,M;\,2M+2,\,M+2;\,1) \\
&=\frac{\Gamma(2M+2)}{\Gamma(M+1)\Gamma(M+3)} \Fhyp{M+1,\,1,\,2}{M+3,\,M+2}{1}
\end{align*}
and
\begin{align*}\label{cf_value_simplified}
L=\frac{1}{M+1}\left(1-\frac{{}_3F_2(1,\,2,\,M+1;\,M+2,\,M+3;\,1)}{(M+1)(M+2)}\right).
\end{align*}
By Lemma \ref{hyp_to_sum}, we have
\begin{align*}
L=\frac{M}{M+1}\sum_{k=0}^{\infty}\frac{k!}{(M+k+1)(M+2)_k},
\end{align*}
and due to Lemma \ref{hyp_to_lerch} the proof follows.
\qed

\section{Proofs of the third and fourth theorems}

We prove Theorems \ref{conj_3} and \ref{conj_4} in one section. As before, we start with the auxiliary statements.

\subsection{Auxiliary statements to prove Theorems \ref{conj_3} and \ref{conj_4}}

\begin{lem}\label{polygamma1}
Let $\psi_1$ denote the polygamma function of order $1$, also known as the trigamma function,
\[
\psi_1(z)=\sum_{n=0}^{\infty}\frac{1}{(n+z)^2}.
\]
Then
\[
\psi_1(z)
=
\frac{1}{z}
+\frac{1}{2z^2}
+\frac{2\pi}{z}g(z),
\]
where
\[
g(z)=
\cfrac{a_1}{z^2+
\cfrac{a_2}{1+
\cfrac{a_3}{z^2+
\cfrac{a_4}{1+\cdots}}}},
\qquad
|\arg z|<\frac{\pi}{2},
\]
and
\begin{align*}
a_1=\frac{1}{12\pi},
\qquad
a_n=\frac{n^2(n^2-1)}{4(4n^2-1)},
\qquad n\in \{2,\,3,\,\ldots\}.
\end{align*}
\end{lem}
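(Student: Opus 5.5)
This is the classical Stieltjes continued fraction for the trigamma function, and I would derive it from Binet's second formula. That formula can either be quoted from the literature, as the paper did for Lemma \ref{2F1_ratio} via \cite{Handbook}, or derived directly.

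\textbf{Step 1: Stieltjes-transform representation.} Start from
\[
\psi_1(z)=\sum_{n\ge0}\frac1{(n+z)^2}=\int_0^\infty\frac{t\,e^{-zt}}{1-e^{-t}}\,dt,\qquad \Re z>0 .
\]
Subtract the terms coming from $\int_0^\infty e^{-zt}(1+t/2)\,dt$, which equal $1/z^2+1/(2z^2)$. After an integration by parts, this gives
\[
\psi_1(z)-\frac1z-\frac1{2z^2}=\int_0^\infty e^{-zt}\Big(\frac{t}{1-e^{-t}}-1-\frac t2\Big)\frac{dt}{t}\cdot(\text{correction}).
\]
It is cleaner to differentiate Binet's formula
\[
\log\Gamma(z)=(z-\tfrac12)\log z-z+\tfrac12\log2\pi+2\int_0^\infty\frac{\arctan(t/z)}{e^{2\pi t}-1}\,dt
\]
twice, which yields
\[
\psi_1(z)=\frac1z+\frac1{2z^2}+\frac{2}{z}\int_0^\infty\frac{2t\,z^2/ z^2}{(t^2+z^2)\ldots}\,\frac{dt}{e^{2\pi t}-1}.
\]
Carrying out the differentiation carefully, the remainder takes the form $\frac{2\pi}{z}g(z)$ with
\[
g(z)=\int_0^\infty\frac{d\sigma(u)}{z^2+u},\qquad d\sigma\ \text{a positive measure on }(0,\infty),\qquad u=t^2 ,
\]
where $d\sigma$ comes from the weight $t\,dt/(e^{2\pi t}-1)$, suitably normalized by $\pi$. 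This presents $g$ as a Stieltjes transform in the variable $w=z^2$.

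\textbf{Step 2: Moments and the S-fraction.} The moments of $d\sigma$ are Bernoulli numbers, since $\int_0^\infty t^{2k+1}/(e^{2\pi t}-1)\,dt=(-1)^k B_{2k+2}/(4k+4)$. Hence $g(z)\sim\sum_k (-1)^k c_k z^{-2k-2}$, with $c_0=1/(12\pi)$ after normalization; this gives $a_1$. The standard theory (Stieltjes; \cite{Lorentzen}, \cite{Handbook}) says that every Stieltjes transform of a positive measure with finite moments has a corresponding S-fraction $a_1/(w+a_2/(1+a_3/(w+\cdots)))$ with $a_n>0$. The coefficients are determined by the Hankel determinants of the moments. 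To obtain them in closed form, I would not compute Hankel determinants directly. Instead I would identify $d\sigma$, via $t\mapsto$ the measure $t\,dt/(e^{2\pi t}-1)$, with the orthogonality measure of continuous Hahn / Wilson-type polynomials, namely the Meixner--Pollaczek family with $\lambda$ corresponding to the $\cosh^{-2}$ and $(e^{2\pi t}-1)^{-1}$ weights. Their known three-term recurrence gives J-fraction coefficients. Contracting the S-fraction via the even part, Lemma \ref{even_part}, and matching with that J-fraction yields $a_n=n^2(n^2-1)/(4(4n^2-1))$. The consistency check is that $a_na_{n+1}$ and $a_n+a_{n+1}$ reproduce the Jacobi recurrence coefficients. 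This identification is the main obstacle. In practice, I would cite the explicit result (\cite[pp.~229--232]{Handbook}, \cite{ShentonBowman1971}) and verify the first few $a_n$ against the Bernoulli moments.

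\textbf{Step 3: Convergence.} Since $a_n\sim n^2/16$, we have $\sum a_n^{-1/2}=\infty$ (Carleman's condition), so the Stieltjes moment problem is determinate. The S-fraction then converges to the Stieltjes transform for $w=z^2\notin(-\infty,0]$, that is, for $|\arg z|<\pi/2$. This can also be seen by an equivalence transformation (Lemma \ref{equiv_cf}) to $\k(1/b_n)$ with $\sum|b_n|=\infty$, followed by Van Vleck's theorem, Lemma \ref{cf_convergence_1/bn}, for $z$ in a sector. The result then extends to the full region by Stieltjes--Vitali and analytic continuation.
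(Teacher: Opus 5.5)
\textbf{The derivation in Steps 1--2 uses the wrong measure, so it does not prove the lemma.} Your fallback of citing the explicit S-fraction is exactly what the paper does: its proof consists only of the references \cite[p.~235]{Handbook} and \cite[pp.~240--245]{Lange1994}.

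The weight $t\,dt/(e^{2\pi t}-1)$, with moments $(-1)^kB_{2k+2}/(4k+4)$, belongs to the digamma remainder $\log z-\frac{1}{2z}-\psi(z)=2\int_0^\infty\frac{t\,dt}{(t^2+z^2)(e^{2\pi t}-1)}$, not to the trigamma one. Differentiating Binet's formula twice gives a squared denominator:
\[
\psi_1(z)-\frac1z-\frac1{2z^2}=4z\int_0^\infty\frac{t\,dt}{(t^2+z^2)^2(e^{2\pi t}-1)} .
\]
Also, $\int_0^\infty e^{-zt}(1+t/2)\,dt=1/z+1/(2z^2)$, not $1/z^2+1/(2z^2)$. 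Reaching a genuine Stieltjes transform requires an integration by parts in $t$, and this changes the weight:
\[
g(z)=\frac{z}{2\pi}\Bigl(\psi_1(z)-\frac1z-\frac1{2z^2}\Bigr)=\frac12\int_0^\infty\frac{t^2}{\sinh^2(\pi t)}\,\frac{dt}{t^2+z^2}.
\]
Its moments in $w=z^2$ are $c_k=(-1)^kB_{2k+2}/(2\pi)$. These differ from yours by the $k$-dependent factor $2(k+1)/\pi$, so no normalization repairs it. Your own proposed check would expose this. Your moments give $a_2=c_1/c_0=1/10$ and then $a_3=79/210$, which are the coefficients of the digamma fraction. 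The lemma has $a_2=1/5$ and $a_3=18/35$, and these are what the correct $c_k$ produce.

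The orthogonal family is also misidentified. Since $|\Gamma(1+it)|^2=\pi t/\sinh(\pi t)$, the correct weight is $|\Gamma(1+it)|^4/\pi^2$ on $\mathbb{R}$. This is the continuous Hahn weight with all four parameters equal to $1$; Meixner--Pollaczek weights involve only $|\Gamma(\lambda+it)|^2$. The monic recurrence $p_{n+1}(t)=t\,p_n(t)-\lambda_n p_{n-1}(t)$ has $\lambda_n=\frac{n(n+1)^2(n+2)}{4(2n+1)(2n+3)}$. Because the measure is symmetric, no even contraction is needed:
\[
z\,g(z)=\cfrac{a_1}{z+\cfrac{\lambda_1}{z+\cfrac{\lambda_2}{z+\cdots}}} .
\]
The equivalence transformation of Lemma~\ref{equiv_cf} with $r_n=z^{(-1)^{n+1}}$ turns this into $z$ times the stated fraction for $g$, with $a_{n+1}=\lambda_n=\frac{(n+1)^2((n+1)^2-1)}{4(4(n+1)^2-1)}$. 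With these two corrections your route becomes a genuine self-contained proof, and your Step 3 on convergence is fine.
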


\begin{proof}
See \cite[p.~235]{Handbook} for the stated formulation and
\cite[pp.~240-245]{Lange1994} for its derivation and proof.
\end{proof}

\begin{lem}\label{polygamma2}
Let $\psi_2$ denote the polygamma function of order 2, also known as the tetragamma function,
\[
\psi_2(z)
=
-2\sum_{n=0}^{\infty}\frac{1}{(n+z)^3}.
\]
Then
\[
\psi_2(z)
=
-\frac{1}{z^2}
-\frac{1}{z^3}
-\left(\frac{2\pi}{z}\right)^2 g(z),
\]
where
\[
g(z)
=
\cfrac{a_1}{z^2+
\cfrac{a_2}{1+
\cfrac{a_3}{z^2+
\cfrac{a_4}{1+\cdots}}}},
\qquad
|\arg z|<\frac{\pi}{2},
\]
and
\begin{align*}
a_1=\frac{1}{8\pi^2},
\qquad
a_{2n}
=
\frac{n^2(n+1)}{2(2n+1)},
\qquad
a_{2n+1}
=
\frac{n(n+1)^2}{2(2n+1)},
\qquad n\in\mathbb{N}.
\end{align*}
\end{lem}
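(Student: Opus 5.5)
We will mostly rely on the literature, as was done for Lemma~\ref{polygamma1}: the formula is stated in \cite[p.~236]{Handbook}, and its derivation follows the same method as in \cite{Lange1994}. To keep the argument self-contained, we outline the derivation below.

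\emph{Step 1: reduce to a Stieltjes transform.} Start from Binet's second formula
\[
\psi(z)=\log z-\frac{1}{2z}-2\int_0^\infty\frac{t\,dt}{(t^2+z^2)(e^{2\pi t}-1)},\qquad \Re z>0,
\]
and differentiate it twice; the terms $-1/z^2-1/z^3$ come from $\log z-1/(2z)$. The remaining integral term is
\[
-4\int_0^\infty \frac{t(3z^2-t^2)}{(t^2+z^2)^3}\frac{dt}{e^{2\pi t}-1}.
\]
Integrate by parts in $t$ (twice if needed) to move all derivatives onto the weight $t/(e^{2\pi t}-1)$. This should give
\[
\left(\frac{2\pi}{z}\right)^2 g(z)=\frac{1}{z^2}\int_0^\infty\frac{w(t)\,dt}{z^2+t^2},
\]
with a positive even weight of the form $w(t)\propto t^{2}/\sinh^2(\pi t)$ or a close relative. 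Equivalently, $g(z)=\int_0^\infty d\mu(s)/(z^2+s)$ for a positive measure $\mu$ on $[0,\infty)$ with $s=t^2$. The normalisation $a_1=1/(8\pi^2)$ is then exactly $\mu([0,\infty))$. It can be checked against the asymptotic expansion $\psi_2(z)=-z^{-2}-z^{-3}-\tfrac12 z^{-4}+\cdots$, which fixes $a_1(2\pi)^2=1/2$.

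\emph{Step 2: identify the continued fraction.} A Stieltjes transform $\int d\mu(s)/(x+s)$ with $x=z^2$ has an S-fraction expansion $a_1/(x+a_2/(1+a_3/(x+\cdots)))$. Its coefficients are determined by the three-term recurrence of the monic orthogonal polynomials for $w(t)\,dt$ on $\mathbb{R}$. Those polynomials are even, and their contraction gives the alternating $x$/$1$ pattern. The weight $t^{2}/\sinh^2(\pi t)$ is a continuous Hahn (or, after symmetrisation, Wilson) weight with half-integer parameters. We will read off the Jacobi recurrence coefficients $\lambda_n$ from the known formulas for that family. Then we convert them to S-fraction coefficients via $\lambda_n=a_{2n}a_{2n+1}$ type relations, that is, via the even contraction in Lemma~\ref{even_part}. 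We expect to recover
\[
a_{2n}=\frac{n^2(n+1)}{2(2n+1)},\qquad a_{2n+1}=\frac{n(n+1)^2}{2(2n+1)}.
\]
As a cross-check, we compare the first few coefficients with the quotient-difference algorithm applied to the moments. These moments are Bernoulli numbers, because $\int t^{2k}w(t)\,dt$ is expressible through $B_{2k+2}$.

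\emph{Step 3: convergence and validity on $|\arg z|<\pi/2$.} Since $a_n>0$ and $a_n\sim n/4$, we have $\sum 1/\sqrt{a_n}=\infty$ (Carleman). Hence the moment problem is determinate, and the S-fraction converges to the Stieltjes transform for all $x=z^2\notin(-\infty,0]$. Alternatively, we can apply an equivalence transformation (Lemma~\ref{equiv_cf}) to bring it to the form $\k(1/b_n)$ with $b_n$ in a sector $G_\varepsilon$ and $\sum|b_n|=\infty$, and then invoke Lemma~\ref{cf_convergence_1/bn}. In both cases we need $z^2\notin(-\infty,0]$, which is exactly the condition $|\arg z|<\pi/2$.

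The main obstacle is Step 2: pinning down the exact weight and matching its orthogonal-polynomial recurrence coefficients to the stated $a_n$. Sign and normalisation bookkeeping in the integration by parts is error-prone. If the closed form for the family is not at hand, we will instead verify that the stated $a_n$ reproduce the moment sequence, via the Hankel determinant / qd identities, and conclude by determinacy.
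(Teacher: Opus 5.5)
Relying on the literature is what the paper does: its whole proof is the pointer to \cite[p.~235]{Handbook} for the formula and to \cite[pp.~245--249]{Lange1994} for the derivation. The self-contained sketch you add, however, fails at its decisive step. In Step~2 the weight $t^2/\sinh^2(\pi t)=\pi^{-2}|\Gamma(1+it)|^4$ is the trigamma weight behind Lemma~\ref{polygamma1}. Its continuous Hahn recurrence gives back $a_n=n^2(n^2-1)/(4(4n^2-1))$, e.g.\ $a_2=1/5$. But the expansion $\psi_2(z)=-z^{-2}-z^{-3}-\tfrac12 z^{-4}+\tfrac16 z^{-6}-\cdots$ forces $a_2=1/3$. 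The true weight (below) contains a factor $\cosh(\pi t)$ that vanishes at $t=\pm i/2$. Products of Gamma functions have no zeros, so it is not a continuous Hahn weight for any parameters, half-integer or not. Your fallback does not close the gap either. Checking the $a_n$ against the moments $\propto(-1)^k(2k+3)B_{2k+2}$ via Hankel determinants or the qd scheme for \emph{all} $n$ is equivalent to the lemma itself.

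The missing computation is short. Since $h(t,z)=t/(t^2+z^2)$ is harmonic, $\partial_z^2h=-\partial_t^2h$. Hence the remainder in $\psi_2$ equals $2\int_0^\infty \partial_t^2h\,\phi\,dt$ with $\phi(t)=(e^{2\pi t}-1)^{-1}$. Replace $h$ by $h-t/z^2=-t^3/(z^2(t^2+z^2))$: it has the same second derivative, and the boundary terms at $t=0$, where $\phi\sim 1/(2\pi t)$, now vanish. Integrate by parts twice and use $\phi''(t)=\pi^2\cosh(\pi t)/\sinh^3(\pi t)$:
\[
\psi_2(z)=-\frac{1}{z^2}-\frac{1}{z^3}-\frac{1}{z^2}\int_0^\infty\frac{w(t)\,dt}{t^2+z^2},
\qquad
w(t)=\frac{2\pi^2t^3\cosh(\pi t)}{\sinh^3(\pi t)}=\frac{1}{2\pi}\left|\frac{\Gamma(1+it)^3\,\Gamma(it)}{\Gamma(2it)}\right|^2 .
\]
Thus $g(z)=(4\pi^2)^{-1}\int_0^\infty w(t)\,dt/(z^2+t^2)$, and $w$ is the Wilson weight with parameters $(1,1,1,0)$. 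This is a harmless boundary case, since $|\Gamma(it)/\Gamma(2it)|^2\to4$ as $t\to0$. Its total mass is $\Gamma(2)^3/\Gamma(3)=1/2$, which gives $a_1=1/(8\pi^2)$.

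The monic Wilson polynomials in $s=t^2$ satisfy $sp_n=p_{n+1}+\alpha_np_n+\beta_np_{n-1}$, where $\alpha_n=A_n+C_n-1$, $\beta_n=A_{n-1}C_n$, $A_n=(n+2)^3/(2(2n+3))$ and $C_n=n^3/(2(2n+1))$. By Lemma~\ref{even_part}, the S-fraction for $g$ must satisfy $\alpha_0=a_2$, $\alpha_n=a_{2n+1}+a_{2n+2}$ and $\beta_n=a_{2n}a_{2n+1}$. The stated $a_n$ satisfy all three; for instance $A_{n-1}C_n=n^3(n+1)^3/(4(2n+1)^2)=a_{2n}a_{2n+1}$. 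Equivalently, in the variable $t$ the monic orthogonal polynomials for the even weight $w$ on $\mathbb R$ have recurrence coefficients exactly $a_{n+1}$; note these polynomials have parity $(-1)^n$, not all even.

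Finally, in Step~3 you have $a_n\sim n^2/16$, not $n/4$. Since $\sum a_n^{-1/2}$ still diverges, your convergence and determinacy argument survives.
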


\begin{proof}
See \cite[p.~235]{Handbook} for the stated formulation and
\cite[pp.~245--249]{Lange1994} for its derivation and proof.
\end{proof}

\begin{lem}\label{polygamma_recurrence}
Let $n\in\mathbb{N}$ and let $\psi_n$ denote the polygamma
function of order $n$
\[
\psi_n(z)
=
(-1)^{n+1}n!
\sum_{k=0}^{\infty}\frac{1}{(k+z)^{n+1}}.
\]
Then, for
$z\notin\mathbb{Z}_0^-=\{0,-1,-2,\ldots\}$,
\[
\psi_n(z+1)
=
\psi_n(z)
+
\frac{(-1)^n n!}{z^{n+1}},\qquad n\in\mathbb{N}.
\]
\end{lem}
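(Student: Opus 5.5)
The identity follows directly from the series definition, which converges absolutely for $z\notin\mathbb{Z}_0^-$. I would compare the two series for $\psi_n(z+1)$ and $\psi_n(z)$ term by term.

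First, write
\[
\psi_n(z+1)=(-1)^{n+1}n!\sum_{k=0}^{\infty}\frac{1}{(k+z+1)^{n+1}}.
\]
Shifting the index with $j=k+1$ turns this into
\[
\psi_n(z+1)=(-1)^{n+1}n!\sum_{j=1}^{\infty}\frac{1}{(j+z)^{n+1}}.
\]

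Next, write $\psi_n(z)$ as its $k=0$ term plus the same tail:
\[
\psi_n(z)=(-1)^{n+1}n!\,\frac{1}{z^{n+1}}+(-1)^{n+1}n!\sum_{j=1}^{\infty}\frac{1}{(j+z)^{n+1}}.
\]
Subtracting the two expressions cancels the tails and leaves
\[
\psi_n(z+1)-\psi_n(z)=-(-1)^{n+1}n!\,z^{-n-1}=(-1)^n n!\,z^{-n-1},
\]
which is the claimed relation.

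The only justification needed is convergence. Since $n\geq1$, we have $|k+z|^{-(n+1)}=O(k^{-2})$ as $k\to\infty$, uniformly away from the poles, so both series converge absolutely for $z\notin\mathbb{Z}_0^-$. This is also what makes the reindexing and the termwise subtraction legitimate, and it ensures $z+1\notin\mathbb{Z}_0^-$ is not needed separately (if $z=-1$, the left side is undefined anyway, but that case is excluded since $-1\in\mathbb{Z}_0^-$). I expect no real obstacle here. As an alternative route, one could differentiate the functional equation $\psi(z+1)=\psi(z)+1/z$ exactly $n$ times.
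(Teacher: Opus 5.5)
Your proposal is correct and takes essentially the same approach as the paper: reindex the series for $\psi_n(z+1)$ so it becomes the tail $\sum_{k\geq1}(k+z)^{-(n+1)}$ of the series for $\psi_n(z)$, and the split-off $k=0$ term gives $(-1)^n n!/z^{n+1}$. Your convergence remark is a harmless extra; note that $z\notin\mathbb{Z}_0^-$ already forces $z+1\notin\mathbb{Z}_0^-$, so the parenthetical about $z=-1$ is unnecessary.
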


\begin{proof}
The proof is straightforward
\[
\psi_n(z+1)
=
(-1)^{n+1}n!
\sum_{k=1}^{\infty}\frac{1}{(k+z)^{n+1}}
=
\psi_n(z)+\frac{(-1)^n n!}{z^{n+1}}.
\]
\end{proof}
\subsection{Proof of Theorem \ref{conj_3}}
By the conditions of Theorem \ref{conj_3},
\begin{align*}
b_0=2M-1,\qquad
b_n=2M\frac{2n+1}{n(n+1)},\qquad n\in\mathbb N.
\end{align*}
Since $\Re M>0$ and $(2n+1)/(n(n+1))>0$, by Lemma~\ref{cf_convergence_1/bn}
there exists $\varepsilon\in(0,\pi/2)$ such that
\begin{align*}
b_n\in G_\varepsilon=\left\{w\in\mathbb{C}:|\arg w|\leqslant
\frac{\pi}{2}-\varepsilon\right\}\cup\{0\}, \qquad n\in\N,
\end{align*}
Moreover,
\[
|b_n|
=
2|M|\frac{2n+1}{n(n+1)}
\sim\frac{4|M|}{n},
\qquad n\to\infty,
\]
implies
\[
\sum_{n=1}^{\infty}|b_n|=\infty.
\]
Therefore, by Lemma~\ref{cf_convergence_1/bn}, the continued fraction $b_0+\k(1/b_n)$
converges. Denote its limit by $L$. Then
\begin{align}\label{to_substitute}
L=2M-1+\K_{n=1}^{\infty}\frac{1}{b_n}.
\end{align}
Applying the equivalence transformation from Lemma~\ref{equiv_cf}
with $s_0=1$ and
\[
s_n=
\begin{cases}
\displaystyle
M\frac{n(n+1)}{4n+2}, & n\ \text{odd},\\[3mm]
\displaystyle
\frac{1}{M}\frac{n(n+1)}{4n+2}, & n\ \text{even},
\end{cases}
\qquad n\in\N,
\]
we obtain
\[
K:=\K_{n=1}^{\infty}\frac{1}{b_n}
=
\K_{n=1}^{\infty}\frac{e_n}{f_n},
\]
where
\[
e_n=s_ns_{n-1},\qquad f_n=s_nb_n, \qquad s_0=1, \qquad n\in\mathbb{N}.
\]
Consequently,
\[
e_1=\frac{M}{3},
\qquad
e_n=\frac{n^2(n^2-1)}{4(4n^2-1)},
\qquad n\in \{2,\,3,\,\ldots\},
\]
and
\[
f_n=
\begin{cases}
M^2,&n\ \text{odd},\\
1,&n\ \text{even},
\end{cases}
\qquad n\in\mathbb{N}.
\]
By Lemma~\ref{polygamma1},
\[
\psi_1(M)
=
\frac{1}{M}
+\frac{1}{2M^2}
+\frac{2\pi}{M}g(M),\qquad \Re M>0,
\]
where
\[
g(M)=
\cfrac{a_1}{
M^2+
\cfrac{a_2}{
1+
\cfrac{a_3}{
M^2+
\cfrac{a_4}{1+\ddots}}}},
\]
with
\[
a_1=\frac{1}{12\pi},
\qquad
a_n=\frac{n^2(n^2-1)}{4(4n^2-1)},
\qquad n\in \{2,\,3,\,\ldots\}.
\]
On the other hand, the continued fraction defining $K$ can be written as
\[
K=
\cfrac{M/3}{
M^2+
\cfrac{a_2}{
1+
\cfrac{a_3}{
M^2+
\cfrac{a_4}{1+\ddots}}}}.
\]
Therefore,
\[
g(M)
=
\frac{1/(12\pi)}{M/3}\,K
=
\frac{K}{4\pi M}.
\]
Consequently,
\begin{align*}
\psi_1(M)
=
\frac{1}{M}
+\frac{1}{2M^2}
+\frac{K}{2M^2}
\qquad \Rightarrow \qquad
K
=
2M^2
\left(
\psi_1(M)-\frac{1}{M}-\frac{1}{2M^2}
\right).
\end{align*}
By Lemma~\ref{polygamma_recurrence},
\[
\psi_1(M)=\psi_1(M+1)+\frac{1}{M^2}.
\]
Therefore
\begin{align*}
K
=
2M^2\psi_1(M+1)-2M+1.
\end{align*}
Since
\[
\psi_1(M+1)
=
\sum_{n=0}^{\infty}\frac{1}{(n+M+1)^2}
=
\Phi(1,2,M+1),
\]
we obtain
\[
K=2M^2\Phi(1,2,M+1)-2M+1.
\]
Substituting this into \eqref{to_substitute} yields
\[
L
=
2M^2\Phi(1,2,M+1),
\]
which proves the theorem.
\qed

\subsection{Proof of Theorem \ref{conj_4}}

By the conditions of Theorem~\ref{conj_4},
\begin{align*}
b_0&=M-1,\qquad
b_{2n-1}=\frac{2M}{n},\qquad
b_{2n}=\frac{M(2n+1)}{n(n+1)},
\qquad n\in\mathbb N.
\end{align*}
Equivalently,
\[
b_n=
\begin{cases}
\dfrac{4M}{n+1}, & n\ \text{odd},\\[2mm]
\dfrac{4M(n+1)}{n(n+2)}, & n\ \text{even}.
\end{cases}
\]
As in the proof of Theorem~\ref{conj_3}, since $\Re M>0$, there exists
$\varepsilon\in(0,\pi/2)$ such that
\[
|\arg M|\leqslant\frac{\pi}{2}-\varepsilon.
\]
Since each $b_n$ is a positive real multiple of $M$, we have
\[
b_n\in
G_\varepsilon
:=
\left\{
z\in\mathbb C:
|\arg z|\leqslant\frac{\pi}{2}-\varepsilon
\right\}\cup\{0\},
\qquad n\in\N.
\]
Moreover,
\[
|b_n|\sim\frac{4|M|}{n},
\qquad n\to\infty,
\]
implies
\[
\sum_{n=1}^{\infty}|b_n|=\infty.
\]
Therefore, by Lemma~\ref{cf_convergence_1/bn}, the continued fraction $b_0+\k(1/b_n)$
converges. Denote its limit by $L$. Thus,
\[
L=M-1+K, \qquad K=\K_{n=1}^{\infty}\frac{1}{b_n}.
\]

Applying the equivalence transformation from Lemma~\ref{equiv_cf}
with $s_0=1$ and
\[
s_n=
\begin{cases}
\displaystyle
M\frac{n+1}{4}, & n\ \text{odd},\\[3mm]
\displaystyle
\frac{n(n+2)}{4M(n+1)}, & n\ \text{even},
\end{cases}
\qquad n\in\N,
\]
we obtain
\[
K=\K_{n=1}^{\infty}\frac{1}{b_n}
=
\K_{n=1}^{\infty}\frac{e_n}{f_n}
,
\]
where
\[
e_n=s_ns_{n-1},\qquad
f_n=s_nb_n, \qquad s_0=1, \qquad n\in\mathbb{N}.
\]
Consequently,
\begin{align*}
e_1=\frac{M}{2},
\qquad
e_{2n}
=
\frac{n^2(n+1)}{2(2n+1)},
\qquad
e_{2n+1}
=
\frac{n(n+1)^2}{2(2n+1)},\qquad n\in\N
\end{align*}
and
\[
f_n=
\begin{cases}
M^2,&n\ \text{odd},\\
1,&n\ \text{even},
\end{cases}
\qquad n\in\mathbb{N}.
\]
By Lemma~\ref{polygamma2},
\[
\psi_2(M)
=
-\frac{1}{M^2}
-\frac{1}{M^3}
-\left(\frac{2\pi}{M}\right)^2g(M), \qquad \Re M>0,
\]
where
\[
g(M)
=
\cfrac{a_1}{
M^2+\cfrac{a_2}{
1+\cfrac{a_3}{
M^2+\cfrac{a_4}{1+\ddots}}}},
\]
with
\[
a_1=\frac{1}{8\pi^2},
\qquad
a_{2n}=e_{2n},
\qquad
a_{2n+1}=e_{2n+1}, \qquad n\in\mathbb{N}.
\]
Comparing the continued fractions defining $g(M)$ and $K$, we obtain
\[
g(M)
=
\frac{1/(8\pi^2)}{M/2}\,K
=
\frac{K}{4\pi^2M}.
\]
Consequently,
\begin{align*}
\psi_2(M)
=
-\frac{1}{M^2}
-\frac{1}{M^3}
-\frac{K}{M^3},
\qquad \Rightarrow \qquad
K
=
M^3\left(
-\psi_2(M)-\frac{1}{M^2}-\frac{1}{M^3}
\right).
\end{align*}
By Lemma~\ref{polygamma_recurrence},
\[
\psi_2(M+1)
=
\psi_2(M)+\frac{2}{M^3},
\]
and therefore
\begin{align*}
K
=
-M^3\psi_2(M+1)-M+1.
\end{align*}
By using,
\[
\psi_2(M+1)
=
-2\sum_{n=0}^{\infty}
\frac{1}{(n+M+1)^3}
=
-2\Phi(1,3,M+1),
\]
we obtain
\[
K
=
2M^3\Phi(1,3,M+1)-M+1.
\]
Therefore
\begin{align*}
L=M-1+K=
2M^3\Phi(1,3,M+1),
\end{align*}
which proves the theorem.
\qed

\bibliographystyle{plainurl}
\bibliography{bibliography}

\begin{thebibliography}{10}

\bibitem{Arakawa}
Tsuneo Arakawa, Tomoyoshi Ibukiyama, and Masanobu Kaneko.
\newblock {\em Bernoulli Numbers and Zeta Functions}.
\newblock Springer, Tokyo, 2014.
\newblock \href {https://doi.org/10.1007/978-4-431-54919-2} {\path{doi:10.1007/978-4-431-54919-2}}.

\bibitem{Bailey}
W.~N. Bailey.
\newblock {\em Generalized Hypergeometric Series}.
\newblock Cambridge Tracts in Mathematics and Mathematical Physics. The University Press, Cambridge, 1935.

\bibitem{Bailey1954}
W.~N. Bailey.
\newblock Contiguous hypergeometric functions of the type ${}_3{F}_2(1)$.
\newblock {\em Glasg. Math. J.}, 2(2):62--65, 1954.
\newblock \href {https://doi.org/10.1017/S2040618500033049} {\path{doi:10.1017/S2040618500033049}}.

\bibitem{Berg}
C.~Berg.
\newblock {\em Complex Analysis}.
\newblock Department of Mathematical Sciences, University of Copenhagen, Copenhagen, 2012.

\bibitem{BerndtRamanujanNotebooksII}
B.~C. Berndt.
\newblock {\em Ramanujan's Notebooks, Part II}.
\newblock Springer-Verlag, New York, 1989.

\bibitem{BerndtLamphereWilson1985}
B.~C. Berndt, R.~L. Lamphere, and B.~M. Wilson.
\newblock Chapter 12 of {R}amanujan's second notebook: Continued fractions.
\newblock {\em Rocky Mountain J. Math.}, 15(2):235--310, 1985.
\newblock \href {https://doi.org/10.1216/RMJ-1985-15-2-235} {\path{doi:10.1216/RMJ-1985-15-2-235}}.

\bibitem{BonanHamadaJones2005}
C.~M. Bonan-Hamada and W.~B. Jones.
\newblock Stieltjes continued fractions for polygamma functions; speed of convergence.
\newblock {\em J. Comput. Appl. Math.}, 179(1--2):47--55, 2005.
\newblock \href {https://doi.org/10.1016/j.cam.2004.09.034} {\path{doi:10.1016/j.cam.2004.09.034}}.

\bibitem{Cohen2026}
H.~Cohen.
\newblock Continued fractions of polynomial type: Theory and encyclopedic dictionary, 2026.
\newblock \href {https://arxiv.org/abs/2607.06581} {\path{arXiv:2607.06581}}.

\bibitem{Handbook}
A.~Cuyt, V.~B. Petersen, B.~Verdonk, H.~Waadeland, and W.~B. Jones.
\newblock {\em Handbook of Continued Fractions for Special Functions}.
\newblock Springer, Dordrecht, 2008.
\newblock \href {https://doi.org/10.1007/978-1-4020-6949-9} {\path{doi:10.1007/978-1-4020-6949-9}}.

\bibitem{Folland}
G.~B. Folland.
\newblock {\em Real Analysis: Modern Techniques and Their Applications}.
\newblock John Wiley \& Sons, New York, 2 edition, 1999.

\bibitem{Gautschi1967}
W.~Gautschi.
\newblock Computational aspects of three-term recurrence relations.
\newblock {\em SIAM Rev.}, 9(1):24--82, 1967.
\newblock \href {https://doi.org/10.1137/1009002} {\path{doi:10.1137/1009002}}.

\bibitem{Gutnik2010}
L.~Gutnik.
\newblock Elementary proof of {Y}. {V}. {N}esterenko expansion of the number {Z}eta(3) in continued fraction.
\newblock {\em Adv. Difference Equ.}, 2010:143521, 2010.
\newblock \href {https://doi.org/10.1155/2010/143521} {\path{doi:10.1155/2010/143521}}.

\bibitem{Lando}
S.~K. Lando.
\newblock {\em Lectures on Generating Functions}, volume~23 of {\em Student Mathematical Library}.
\newblock American Mathematical Society, Providence, RI, 2003.

\bibitem{Lange1994}
L.~J. Lange.
\newblock Continued fraction representations for functions related to the gamma function.
\newblock In S.~C. Cooper and W.~J. Thron, editors, {\em Continued Fractions and Orthogonal Functions: Theory and Applications}, volume 154 of {\em Lecture Notes in Pure and Applied Mathematics}, pages 233--279. Marcel Dekker, New York, 1994.

\bibitem{LaurincikasGarunkstis2002}
A.~Laurin{\v{c}}ikas and R.~Garunk{\v{s}}tis.
\newblock {\em The Lerch Zeta-Function}.
\newblock Kluwer Academic Publishers, Dordrecht, 1 edition, 2002.
\newblock \href {https://doi.org/10.1007/978-94-017-6401-8} {\path{doi:10.1007/978-94-017-6401-8}}.

\bibitem{Lorentzen}
L.~Lorentzen and H.~Waadeland.
\newblock {\em Continued Fractions}.
\newblock Atlantis Studies in Mathematics for Engineering and Science. Atlantis Press, Paris, 2008.

\bibitem{Matiyasevich}
Y.~V. Matiyasevich.
\newblock Calculation of {R}iemann's zeta function via interpolating determinants.
\newblock Technical Report 2013-18, Max-Planck-Institut f{\"u}r Mathematik, Bonn, 2013.
\newblock URL: \url{https://archive.mpim-bonn.mpg.de/id/eprint/2748/}.

\bibitem{DLMF}
{NIST Digital Library of Mathematical Functions}.
\newblock Nist digital library of mathematical functions.
\newblock \url{https://dlmf.nist.gov/}, 2026.
\newblock F.~W.~J. Olver, A.~B. Olde Daalhuis, D.~W. Lozier, B.~I. Schneider, R.~F. Boisvert, C.~W. Clark, B.~R. Miller, B.~V. Saunders, H.~S. Cohl, and M.~A. McClain, eds.

\bibitem{Perr57}
O.~Perron.
\newblock {\em Die Lehre von den Kettenbr{\"u}chen}, volume~2.
\newblock B. G. Teubner, Stuttgart, 3 edition, 1957.

\bibitem{Rakha}
M.~A. Rakha, A.~K. Rathie, and P.~Chopra.
\newblock On some new contiguous relations for the gauss hypergeometric function with applications.
\newblock {\em Comput. Math. Appl.}, 61(3):620--629, 2011.
\newblock \href {https://doi.org/10.1016/j.camwa.2010.12.008} {\path{doi:10.1016/j.camwa.2010.12.008}}.

\bibitem{Shenton1983}
L.~R. Shenton.
\newblock Continued fractions and the polygamma functions.
\newblock {\em J. Comput. Appl. Math.}, 9(1):29--39, 1983.
\newblock \href {https://doi.org/10.1016/0377-0427(83)90026-2} {\path{doi:10.1016/0377-0427(83)90026-2}}.

\bibitem{ShentonBowman1971}
L.~R. Shenton and K.~O. Bowman.
\newblock Continued fractions for the {PSI} function and its derivatives.
\newblock {\em SIAM J. Appl. Math.}, 20(4):547--554, 1971.
\newblock \href {https://doi.org/10.1137/0120057} {\path{doi:10.1137/0120057}}.

\bibitem{Slater}
L.~J. Slater.
\newblock {\em Generalized Hypergeometric Functions}.
\newblock Cambridge University Press, Cambridge, 1966.

\bibitem{Wilson}
J.~A. Wilson.
\newblock Three-term contiguous relations and some new orthogonal polynomials.
\newblock In E.~B. Saff and R.~S. Varga, editors, {\em Pad{\'e} and Rational Approximation}, pages 227--232. Academic Press, New York, 1977.
\newblock \href {https://doi.org/10.1016/B978-0-12-614150-4.50024-1} {\path{doi:10.1016/B978-0-12-614150-4.50024-1}}.

\bibitem{Wong}
R.~Wong and H.~Li.
\newblock Asymptotic expansions for second-order linear difference equations.
\newblock {\em J. Comput. Appl. Math.}, 41(1):65--94, 1992.
\newblock \href {https://doi.org/10.1016/0377-0427(92)90239-T} {\path{doi:10.1016/0377-0427(92)90239-T}}.

\bibitem{Zagier}
Don Zagier.
\newblock Curious and exotic identities for bernoulli numbers.
\newblock An Appendix in \cite{Arakawa}, pages 239--262.

\end{thebibliography}

\end{document}